\documentclass{amsart}

\RequirePackage{xcolor}
\definecolor{mg}{RGB}{34, 139, 34}

\RequirePackage{float}
\RequirePackage[margin=1in]{geometry}
\RequirePackage{graphicx}
\RequirePackage{amsmath, amsthm, amssymb, amsfonts,slashed}
\usepackage{aliascnt}
\RequirePackage{xifthen}
\RequirePackage{indentfirst}
\RequirePackage{microtype}

\newenvironment{Eq}[1][]{\begin{equation}\ifthenelse{\equal{#1}{}}{}{\tag{#1}}\begin{aligned}}{\end{aligned}\end{equation}\ignorespacesafterend}
\newenvironment{Eq*}[1][]{\begin{equation*}\ifthenelse{\equal{#1}{}}{}{\tag{#1}}\begin{aligned}}{\end{aligned}\end{equation*}\ignorespacesafterend}
\numberwithin{equation}{section}
\newtheorem{theorem}{Theorem}[section]
\newtheorem{hypothesis}{Hypothesis}

\newaliascnt{proposition}{theorem}
\newtheorem{proposition}[proposition]{Proposition}
\aliascntresetthe{proposition}

\newaliascnt{corollary}{theorem}

\aliascntresetthe{corollary}

\newaliascnt{lemma}{theorem}
\newtheorem{lemma}[lemma]{Lemma}
\aliascntresetthe{lemma}

\newaliascnt{claim}{theorem}
\newtheorem{claim}[claim]{Claim}%
\aliascntresetthe{claim}

\theoremstyle{remark}%
\newtheorem{remark}{Remark}[section] 
\theoremstyle{definition}

\usepackage[colorlinks=true,linkcolor=red,citecolor=blue,urlcolor=blue]{hyperref}
\usepackage[capitalise,noabbrev]{cleveref}

\def\<{\langle}             \def\>{\rangle}

\newcommand{\varE}{{\mathfrak E}}
\newcommand{\varF}{\varE'}
\newcommand{\varL}{\mathcal L}
\DeclareMathOperator{\supp}{supp}
\DeclareMathOperator{\fd}{d}	\renewcommand{\d}{\fd}

\newcommand{\tg}{{\tilde g}}
\newcommand{\tpartial}{{\tilde \partial}}
\DeclareMathOperator{\II}{i}

\newcommand{\bphi}{{\boldsymbol{\phi}}} 
\newcommand{\bH}{{\mathcal H}}

\newcommand{\tw}{{\tilde{w}}}
\newcommand{\tq}{{\tilde{q}}}
\newcommand{\tr}{{\tilde{r}}}
\newcommand{\tmu}{{\tilde{\mu}}}

\newcommand{\rR}{{\mathbb{R}}}
\newcommand{\rH}{{\mathbb H}}
\newcommand{\rS}{{\mathbb{S}}}
\newcommand{\rN}{{\mathbb{N}}}
\newcommand{\spartial}{{\slashed \partial}}
\newcommand{\hpartial}{{\hat \partial}}

\newcommand{\kl}[1]{\mathopen{}\left#1}
\newcommand{\kr}[1]{\right#1}
\newcommand{\Low}[1]{\limits_{\substack{#1}}}

\crefname{equation}{}{}
\crefname{theorem}{Theorem}{Theorems}
\Crefname{theorem}{Theorem}{Theorems}
\crefname{proposition}{Proposition}{Propositions}
\Crefname{proposition}{Proposition}{Propositions}
\crefname{corollary}{Corollary}{Corollaries}
\Crefname{corollary}{Corollary}{Corollaries}
\crefname{lemma}{Lemma}{Lemmas}
\Crefname{lemma}{Lemma}{Lemmas}
\crefname{claim}{Claim}{Claims}
\Crefname{claim}{Claim}{Claims}
\crefname{hypothesis}{Hypothesis}{Hypotheses}
\Crefname{hypothesis}{Hypothesis}{Hypotheses}
\crefname{remark}{Remark}{Remarks}
\Crefname{remark}{Remark}{Remarks}
\crefname{definition}{Definition}{Definitions}
\Crefname{definition}{Definition}{Definitions}

\title[Strichartz estimates for Klein-Gordon equations on asymptotically hyperbolic manifolds]
{Strichartz estimates for Klein-Gordon equations on asymptotically hyperbolic manifolds with applications}

\author{Wei Dai}
\address{
School of Mathematical Sciences\\
 Zhejiang University of Technology\\ 
 Hangzhou 310023,
 China}
\email{daiw23@zjut.edu.cn}

\author{Chengbo Wang$^{*}$}\thanks{* Corresponding author}
\address{School of Mathematical Sciences\\
Zhejiang Normal University\\
Jinhua 321004, China}
\email{wangcbo@zjnu.edu.cn}

\thanks{The first author was supported in part by NSFC 12501292. The second author was supported in part by NSFC 12671293.}

\date{\today}

\begin{document}
\bibliographystyle{plain}

\begin{abstract}
In this paper, we establish global-in-time Strichartz estimates for Klein-Gordon equations on non-trapping asymptotically hyperbolic manifolds. Our result is a positive-mass counterpart of the global Strichartz estimates of Sire, Sogge, Wang, and Zhang for shifted wave equations, and recovers the same range of admissible pairs. The main new feature is the low-frequency analysis, where the positive mass changes the spectral phase and leads to a different dispersive mechanism. Combining the resulting low-frequency estimates with microlocalized high-frequency estimates yields the full global Strichartz estimates. As applications, we obtain small-data global existence and scattering for power-type and certain derivative semilinear equations, as well as small-data global existence for a class of smooth semilinear equations without imposing a null condition.
\end{abstract}

\keywords{Klein-Gordon equations, asymptotically hyperbolic manifolds, Strichartz estimates, local energy estimates, semilinear wave equations}

\subjclass[2020]{35L71, 35L05, 35B40, 58J45}

\maketitle

\section{Introduction}\label{sec:introduction}
The purpose of this paper is to study the dispersive behavior of a Klein-Gordon equation on non-trapping asymptotically hyperbolic manifolds, and its applications to small-data semilinear Cauchy problems.

\subsection{Background on Strichartz estimates}
Strichartz estimates play a central role in the study of nonlinear wave, Klein-Gordon, and Schr\"odinger equations; see, for instance, Tao \cite{MR2233925} and the references therein.
In this direction, our main linear goal is to prove global-in-time Strichartz estimates for a Klein-Gordon equation in the variable-curvature asymptotically hyperbolic setting.

More precisely, let $(M^\circ,g)$ be a complete noncompact Riemannian manifold whose metric is asymptotically hyperbolic and non-trapping in the sense recalled below.  We prove global-in-time Strichartz estimates for solutions to
\begin{Eq*}
\kl(\partial_t^2-\Delta_g-\frac{(n-1)^2}{4}+\kappa^2\kr)\phi=F,\qquad \kappa>0,
\end{Eq*}
where $\Delta_g$ is the Laplace-Beltrami operator on $(M^\circ,g)$.

Our motivation comes from the work of Sire, Sogge, the second author, and Zhang \cite{MR4169670} on the corresponding shifted wave equation without the positive mass term.  They proved global-in-time Strichartz estimates for
\begin{Eq*}
\partial_t^2u-\Delta_g u-\frac{(n-1)^2}{4}u=F
\end{Eq*}
on non-trapping asymptotically hyperbolic manifolds, and applied them to the Strauss-type problem.  Our main estimate can be viewed as a positive-mass analogue of that result.

The positive mass term changes the low-frequency part of the analysis in an essential way.  For the shifted wave equation, the phase is linear in the spectral parameter $\lambda$.  In the present positive-mass case, the phase is instead $\sqrt{\lambda^2+\kappa^2}$, which produces a different low-frequency decay mechanism.  The high-frequency analysis, on the other hand, remains close in spirit to \cite{MR4169670}: one combines microlocalized spectral measure estimates with a low-high frequency decomposition of the propagator.

Before stating the estimates precisely, we recall the geometric setting and the linear equation under consideration.

\subsection{Geometric and linear setting}
We work on an $n$-dimensional complete noncompact Riemannian manifold $(M^\circ,g)$ which is the interior of a smooth compact manifold with boundary $M$.  Let $x\in C^\infty(M)$ be a boundary defining function, namely $x\geq0$, $\partial M=\{x=0\}$, and $\d x\neq0$ on $\partial M$.  The metric $g$ is called conformally compact if $x^2g$ extends smoothly to $M$ as a Riemannian metric.  Following the standard convention, we say that $g$ is asymptotically hyperbolic if, in addition,
\begin{Eq*}
|dx|_{x^2g}=1\quad\text{on }\partial M .
\end{Eq*}
Equivalently, after choosing $x$ geodesic near the boundary, the metric takes the form
\begin{Eq}
g=\frac{\d x^2+ h_{jk}(x,y)\d y^j\d y^k}{x^2}
\end{Eq}
Here and throughout the paper, we use the Einstein summation convention.  The expression above is written on a collar neighborhood $[0,\varepsilon)_x\times\partial M$, where $y=(y_1,\ldots,y_{n-1})$ are local coordinates on $\partial M$ and $h(x,y)=h_{jk}(x,y)\d y^j\d y^k$ is a smooth family of metrics on $\partial M$.  Mazzeo \cite{MR961517} showed that the sectional curvature of such a metric tends to $-1$ as $x\to0$.

Such manifolds have been studied in the works of Mazzeo-Melrose \cite{MR916753}, Chen-Hassell \cite{MR3473907, MR3805767}, Sire-Sogge-Wang-Zhang \cite{MR4169670, MR4432952}, and others.  We further assume that $M$ is non-trapping, meaning that every geodesic in $M^\circ$ reaches $\partial M$ in both forward and backward time directions.  A canonical example is hyperbolic space $\rH^n=(B_1,\eta)$, where $B_1$ is the unit ball and
\begin{Eq*}
\eta=\frac{\d x^2+\frac14(1-x^2)^2\d\Theta^2}{x^2}\quad\text{on}\quad (0,1)\times \rS^{n-1},
\end{Eq*}
with $\d\Theta^2$ denoting the standard metric on $\rS^{n-1}$.

As shown in \cite{MR916753}, the continuous spectrum of $-\Delta_g$ is contained in $[(n-1)^2/4, +\infty)$,
while its point spectrum lies in $(0, (n-1)^2/4)$.
Set
\begin{Eq*}
H_0:=-\Delta_g-\frac{(n-1)^2}{4},\qquad H_\kappa:=H_0+\kappa^2.
\end{Eq*}
As in \cite{MR4169670}, we assume that $-\Delta_g$ has no $L^2$ eigenvalues below $\frac{(n-1)^2}{4}$ and that zero is not a resonance of $H_0$; the notion of resonance is understood in the standard resolvent sense, see \cite{MR916753}.
We consider the following Klein-Gordon equation
\begin{Eq}\label{eq:positive-mass-wave-equation}
\begin{cases}
(\partial_t^2-\Delta_g-\frac{(n-1)^2}{4}+\kappa^2)\phi(t,z)=F(t,z),\qquad (t,z)\in \rR_+\times M^\circ\\
\phi(0,z)=\psi_0(z),\quad \partial_t\phi(0,z)=\psi_1(z),\qquad \kappa>0.
\end{cases}
\end{Eq}

In the case where $(M^\circ, g) = (B_1, \eta) = \rH^n$,
equation \eqref{eq:positive-mass-wave-equation} can be compared with the Klein-Gordon equation in Euclidean space.
In particular, when $\kappa = 0$, the shifted operator $H_0=-\Delta_g-\frac{(n-1)^2}{4}$ has continuous spectrum starting at zero.
In this spectral sense, the shifted wave equation plays the role of the standard wave equation in the asymptotically hyperbolic setting.
However, as shown by Anker-Pierfelice-Vallarino \cite{MR2902129},
the richer geometric structure of hyperbolic space allows for a broader range of admissible Strichartz pairs
than those available for the wave equation in Euclidean space.
Earlier related results can be found in the works of Fontaine \cite{MR1302795, MR1443052}, Ionescu \cite{MR1767376}, and the references therein.
As for the case $\kappa > 0$, Strichartz estimates analogous to those in \cite{MR2902129} were established by Anker-Pierfelice \cite{MR3254350}.
Earlier contributions in this direction also appear in the work of Metcalfe-Taylor \cite{MR2775816, MR2944727} and references therein.
For the shifted wave equation in the variable-curvature asymptotically hyperbolic setting, the relevant result we take as a model is due to Sire-Sogge-Wang-Zhang \cite{MR4169670}.
They established Strichartz estimates for the case $\kappa=0$, similar to those in \cite{MR2902129},
with the exception of certain critical cases of admissible pairs.

Recall that, for the Euclidean wave equation,
a common approach to proving Strichartz estimates is to first derive single-frequency estimates using $L^2$ bounds and dispersive estimates,
and then extend these to general solutions via Littlewood-Paley theory.
However, in the hyperbolic setting, the Euclidean Littlewood-Paley theory is not directly available, and spectral multipliers require additional analysis;
see Bouclet \cite{MR2638890}.
The strategy in \cite{MR3254350} and \cite{MR2902129} for overcoming this obstacle relies on precise estimates of the fundamental solution,
obtained through the Fourier transform on hyperbolic space.
This approach, however, breaks down in the presence of variable curvature.
To circumvent this issue, the authors of \cite{MR4169670} adopted a spectral method:
they decomposed the solution operator into low- and high-frequency components,
and derived the desired estimates by analyzing each part separately.

In this paper, we adopt the general strategy of \cite{MR4169670},
carrying out a low-high frequency decomposition of the solution operator tailored to equation \eqref{eq:positive-mass-wave-equation},
and establishing the corresponding estimates.
We emphasize in advance that the main distinction between our approach and that of \cite{MR4169670} lies in the treatment of the low-frequency component,
which captures the essential influence of the positive mass term $\kappa^2\phi$.

To state our main theorems, we first define a Sobolev-type norm:
\begin{Eq}\label{eq:sobolev-norm-associated-with-hkappa}
\|f\|_{H_r^s}:=\kl\|\sqrt{H_\kappa}^{s}f\kr\|_{L_z^r(M^\circ,g)},\quad H^s:=H_2^s,
\end{Eq}
For $f\in C_c^\infty(M^\circ)$, the operator $H_\kappa^{s/2}f$ is defined by the $L^2$ spectral calculus, and $H_r^s$ denotes the completion with respect to the norm above.  When $r\neq2$ and $s<0$, this notation refers to the completion defined above, with $H_\kappa^{s/2}$ first defined on test functions by the spectral calculus.
Since $H_\kappa=H_0+\kappa^2$ and $-\Delta_g=H_0+\frac{(n-1)^2}{4}$, the spectral theorem gives, for fixed $\kappa>0$,
\begin{Eq*}
\|\sqrt{H_\kappa}f\|_{L_z^2(M^\circ,g)}\approx \|\sqrt{-\Delta_g}f\|_{L_z^2(M^\circ,g)}
\approx\|\nabla_g f\|_{L_z^2(M^\circ,g)}.
\end{Eq*}
Moreover, under the spectral assumptions above, $-\Delta_g\geq \frac{(n-1)^2}{4}$ as a quadratic form, and hence $\|\nabla_g f\|_{L_z^2}$ is equivalent to $\|\nabla_g^{\leq1}f\|_{L_z^2}$.  
Here and throughout the paper, whenever $D$ denotes a differential operator or a collection of vector fields, $D^{\leq m}u$ denotes the collection of $D^\alpha u$ with $|\alpha|\leq m$.  
We shall use these equivalent descriptions of the first-order energy norm without further comment.
Next, for $(q,r,\mu)\in[2,\infty]\times(2,\infty]\times \rR$, we define two sets corresponding to admissibility conditions:
\begin{Eq}\label{eq:wave-admissible-triples}
\Lambda_w:=\kl\{(q,r,\mu):\frac{2}{q}\leq(n-1)\kl(\frac{1}{2}-\frac{1}{r}\kr),~2<r\leq\infty,~\mu>s_w\kr\}, 
\end{Eq}
and
\begin{Eq}\label{eq:euclidean-admissible-triples}
\Lambda_e:=&\kl\{(q,r,\mu):\frac{2}{q}\geq(n-1)\kl(\frac{1}{2}-\frac{1}{r}\kr),~2<r\leq\infty,~\mu>s_e\kr\},
\end{Eq}
where $s_w:=n\kl(\frac{1}{2}-\frac{1}{r}\kr)-\frac{1}{q}$ and $s_e:=\frac{n+1}{2}\kl(\frac{1}{2}-\frac{1}{r}\kr)$.
The inequalities $\mu>s_w$ and $\mu>s_e$ are strict; the boundary regularity is not included in the estimates below.

\begin{figure}[H]
\centering
\includegraphics[width=0.94\textwidth]{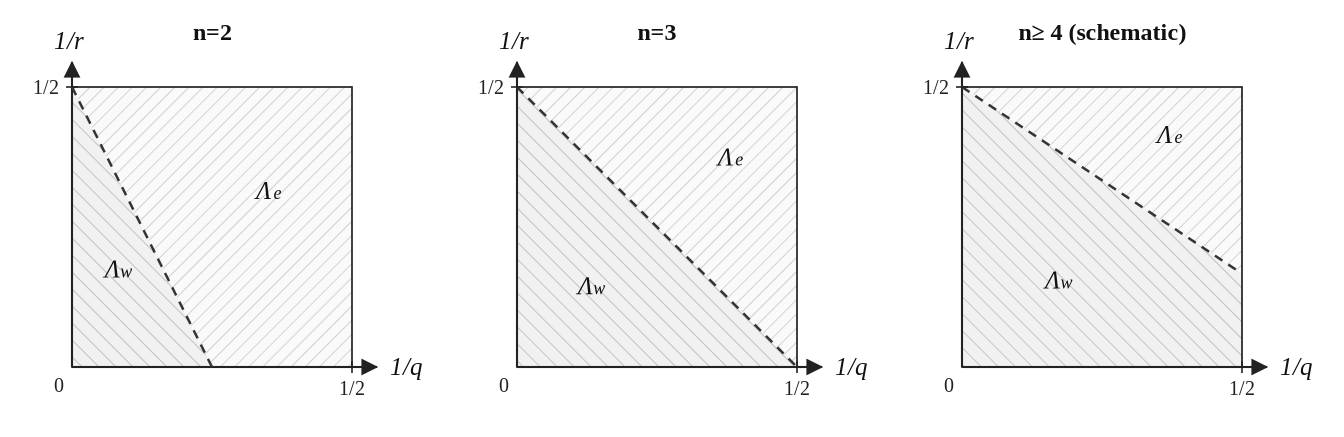}
\caption{The admissible regions in the $(1/q,1/r)$ plane.  The dashed line is $\frac{2}{q}=(n-1)(\frac12-\frac1r)$; the regions below and above it correspond to $\Lambda_w$ and $\Lambda_e$, respectively.  The right panel is schematic for $n\geq4$.}
\label{fig:admissible-regions}
\end{figure}

We are now ready to state our main result concerning the Strichartz estimates.

\begin{theorem}[Strichartz estimates]\label{thm:main-strichartz-estimate}
Let $(M^\circ, g)$ be any non-trapping asymptotically hyperbolic manifold of dimension $n \geq 2$,
and let $\Delta_g$ denote the Laplace-Beltrami operator on $(M^\circ, g)$.
Assume that $-\Delta_g$ has no $L^2$ eigenvalues below $\frac{(n-1)^2}{4}$ and that zero is not a resonance of $H_0$.
Suppose $\phi$ is a solution to the Cauchy problem \eqref{eq:positive-mass-wave-equation} with a fixed $\kappa > 0$.
Then, for every $T\in(0,+\infty]$ and $I=[0,T)$, the following estimate holds for all $(q, r, \mu), (\tilde q, \tr, \tilde \mu) \in \Lambda_w \cup \Lambda_e$:
\begin{Eq}\label{eq:main-strichartz-estimate}
\|\phi\|_{L_t^\infty(I;H^\mu)}+\|\partial_t\phi\|_{L_t^\infty(I;H^{\mu-1})}+\|\phi\|_{L_t^q(I;L_z^r)}
\lesssim \|\psi_0\|_{H^{\mu}}+\|\psi_1\|_{H^{\mu-1}}+\|F\|_{L_t^{\tilde q'}(I;H_{\tr'}^{\mu+\tilde\mu-1})}.
\end{Eq}
\end{theorem}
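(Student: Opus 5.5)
The plan is to reduce \eqref{eq:main-strichartz-estimate} to homogeneous estimates for the half-wave propagators $e^{\pm it\sqrt{H_\kappa}}$ and then recover the inhomogeneous term by a $TT^*$/Christ--Kiselev argument. By Duhamel,
\begin{Eq*}
\phi(t)=\cos(t\sqrt{H_\kappa})\psi_0+\frac{\sin(t\sqrt{H_\kappa})}{\sqrt{H_\kappa}}\psi_1+\int_0^t\frac{\sin((t-s)\sqrt{H_\kappa})}{\sqrt{H_\kappa}}F(s)\,\d s .
\end{Eq*}
The energy part $\|\phi\|_{L^\infty H^\mu}+\|\partial_t\phi\|_{L^\infty H^{\mu-1}}$ of the homogeneous terms follows from unitarity on $L^2$ and the spectral calculus. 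Since $H_\kappa\ge\kappa^2>0$, no low-frequency singularity arises from the factors $\sqrt{H_\kappa}^{-1}$. It therefore suffices to prove
\begin{Eq*}
\|e^{it\sqrt{H_\kappa}}f\|_{L^q_tL^r_z}\lesssim\|f\|_{H^\mu},\qquad (q,r,\mu)\in\Lambda_w\cup\Lambda_e .
\end{Eq*}
The dual estimate, together with $TT^*$, then gives the $F$-terms for arbitrary pairs $(q,r,\mu),(\tilde q,\tilde r,\tilde\mu)$, with regularity $\mu+\tilde\mu-1$ on $F$. The truncation to $s<t$ is handled by Christ--Kiselev when $q>\tilde q'$. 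The endpoint case $q=2=\tilde q$ is avoided because both admissible sets require $\mu$ strictly above the threshold. Where needed, one can instead use that the full-line operator bound holds with room in the exponents.

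Next I split $f=\chi(H_0)f+(1-\chi(H_0))f$ with $\chi\in C_c^\infty$ equal to $1$ near $0$. For the high-frequency part I follow \cite{MR4169670}. Write $\lambda^2$ for the spectral variable of $H_0$ and decompose with a microlocal partition of unity $\sum_j Q_j(\lambda)$, using the Chen--Hassell spectral measure estimates $|Q_j\,\d E(\lambda)Q_j^*(z,z')|\lesssim \lambda^{n-1}(1+\lambda d)^{-(n-1)/2}e^{-(n-1)d/2}$ with oscillating phase $e^{\pm i\lambda d}$.

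The phase $\sqrt{\lambda^2+\kappa^2}=\lambda+O(\lambda^{-1})$ has $\partial_\lambda^2\sqrt{\lambda^2+\kappa^2}\sim\lambda^{-3}$, so on dyadic pieces $\lambda\sim 2^k$ the stationary phase analysis is identical to the wave case up to symbol-class perturbations. This yields the dispersive bound $2^{k(n+1)/2}|t|^{-(n-1)/2}$ for $|t|\lesssim1$ and exponential decay $e^{-c|t|}$-type behavior (from $e^{-(n-1)d/2}$) for large $|t|$. Keel--Tao on each dyadic piece, together with a summation in $k$ that uses the strict inequality $\mu>s_e$ or $\mu>s_w$ (and thus avoids square-function theory), gives the high-frequency estimate.

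The main obstacle is the low-frequency part $\chi(H_0)e^{it\sqrt{H_0+\kappa^2}}$, with kernel
\begin{Eq*}
K_t(z,z')=\int_0^\infty e^{it\sqrt{\lambda^2+\kappa^2}}\chi(\lambda^2)\,\d E(\lambda)(z,z')\,\d\lambda .
\end{Eq*}
Here I would use the low-energy bounds for the spectral measure available under the no-eigenvalue/no-resonance assumptions. These are $|\partial_\lambda^j\d E(\lambda)(z,z')|\lesssim \lambda^{2-j}(1+d)^{C}e^{-(n-1)d/2}$ with symbolic behaviour near $\lambda=0$, so in effect the measure vanishes to second order, like $\lambda^2$. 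The phase has a nondegenerate critical point at $\lambda=0$ with $\partial_\lambda^2\sqrt{\lambda^2+\kappa^2}|_{0}=\kappa^{-1}$, in contrast with the linear phase of \cite{MR4169670}.

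I would split the integral according to $\lambda\lesssim t^{-1/2}$ and apply van der Corput or integration by parts on the remaining part, working with $e^{it\lambda^2/(2\kappa)}$-type expansions. Combined with the $\lambda^2$ vanishing, this should give $|K_t(z,z')|\lesssim |t|^{-3/2}(1+d)^Ce^{-(n-1)d/2}$ for $|t|\ge1$, and a bounded kernel for $|t|\le1$. The missing factor of $d$ from the oscillation $e^{\pm i\lambda d}$ must be tracked: one changes to the variable $\lambda$ with phase $t\sqrt{\lambda^2+\kappa^2}\pm\lambda d$, whose stationary point sits at $\lambda\approx \kappa d/t$, and cutting at $d\sim t$ keeps $t^{-3/2}$ up to polynomial weights in $d$.

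Finally, the Kunze--Stein-type phenomenon on these manifolds turns a kernel bounded by $(1+d)^Ce^{-(n-1)d/2}$ into an $L^{r'}\to L^r$ bound for all $r>2$. This is proven as in \cite{MR4169670} via the volume growth $e^{(n-1)d}$. We then get $\|\chi(H_0)e^{i(t-s)\sqrt{H_\kappa}}\|_{L^{r'}\to L^r}\lesssim(1+|t-s|)^{-3/2}$. Since this decay is integrable, the $TT^*$ argument with Young's inequality in time covers every $q\ge2$ and $r>2$. Hence the low-frequency piece imposes no admissibility restriction, and $\Lambda_w\cup\Lambda_e$ is dictated solely by the high-frequency part.
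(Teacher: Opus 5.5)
Your homogeneous argument, low-frequency kernel bound, dyadic high-frequency analysis, and non-endpoint inhomogeneous estimate via Christ--Kiselev essentially follow the paper's route. The gap is the double endpoint $q=\tilde q=2$, which you dismiss by saying it ``is avoided because both admissible sets require $\mu$ strictly above the threshold.'' That is false: the strict inequality restricts only the regularity, not the time exponent. For instance, $(2,r,\mu)\in\Lambda_e$ whenever $(n-1)(\frac12-\frac1r)\le1$ and $\mu>s_e$; for $n=2$ this holds for every $r\in(2,\infty]$. So \eqref{eq:main-strichartz-estimate} contains the retarded bound $L^2_tL^{\tr'}_z\to L^2_tL^r_z$. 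The paper actually uses it, with $(q,r,\mu)=(\tilde q,\tr,\tilde\mu)=(2,\frac{2n}{n-2a},\frac12)$, in the proof of \cref{thm:glassey-type-global-existence}. Christ--Kiselev fails at $q=\tilde q'=2$, and room in $\mu$ is not room in the time exponent on an infinite interval, so your fallback sentence supplies no argument. On unit time intervals you could trade the room in $\mu$ for some $q_1>2$ and use H\"older, but the contribution from $|t-\tau|\gtrsim1$ would still require decay of $U^{high}(t)U^{high}(\tau)^*$ itself.

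For low frequencies the repair is easy, and it is what the paper does. The mixed-exponent Kunze--Stein inequality gives $\|U^{\mu,low}(t)U^{\tilde\mu,low}(\tau)^*\|_{L^{\tr'}\to L^r}\lesssim(1+|t-\tau|)^{-3/2}$; note you need $r\neq\tr$ here, not just $L^{r'}\to L^r$. The cutoff $\tau<t$ is then harmless, and Young's inequality in time concludes. For high frequencies, however, the truncated operator does not factor as $TT^*$. Any direct estimate of it produces the cross terms $Q_k(\lambda)E'_{\sqrt{H_0}}(\lambda)Q_{k'}^*(\lambda)$ with $k\neq k'$. The diagonal microlocalized bounds you quote say nothing about these, and the un-microlocalized spectral measure need not satisfy them in general (only under extra hypotheses such as nonpositive curvature). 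The missing ingredient is the one-sided microlocal endpoint argument of \cite[Section~5.2]{MR4169670}. On non-incoming pieces with $t>\tau$, and on non-outgoing pieces with $t<\tau$, the phase derivative $(t-\tau)\frac{\lambda}{\sqrt{\lambda^2+\kappa^2}}+\Phi$ has a fixed sign, and $\frac{\lambda}{\sqrt{\lambda^2+\kappa^2}}\approx1$ on the high-frequency support. One-sided integration by parts therefore still gives the needed decay. Without an argument of this type, the $q=\tilde q=2$ case of the theorem remains unproved.
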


\begin{remark}
All implicit constants in this paper may depend on the fixed parameter $\kappa>0$; no uniformity as $\kappa\to0^+$ is asserted.
The admissible triples $(q,r,\mu)$ in \cref{thm:main-strichartz-estimate} are the same as those in the shifted wave estimates of \cite{MR4169670}.  Thus the positive mass term $\kappa^2$ does not change the Strichartz range in the $(1/q,1/r)$ plane.  The norms in the estimates, however, are not identical.  In \cite{MR4169670}, the shifted wave estimates are stated in Sobolev spaces with separated high- and low-frequency regularities, for instance $H^{\mu,0}$ and $H^{\mu-1,-\epsilon}$ in their homogeneous estimate and $H_{\tilde r'}^{\mu+\tilde\mu-1,0}$ in the inhomogeneous estimate.  In the present positive-mass problem the low-frequency phase is instead $\sqrt{\lambda^2+\kappa^2}$, which is elliptic near $\lambda=0$; accordingly the natural scale is the single Sobolev scale defined by $H_\kappa=-\Delta_g-\frac{(n-1)^2}{4}+\kappa^2$.  This is the main formal difference between \cref{thm:main-strichartz-estimate} and the shifted wave estimates.
\end{remark}

\subsection{Applications}
Next, we apply the preceding estimates to nonlinear wave equations with small initial data.
We begin with equations involving nonlinearities of the form $\pm|\phi|^p$ or $\pm|\phi|^{p-1}\phi$, namely,
\begin{Eq}\label{eq:strauss-type-nonlinear-equation}
\begin{cases}
\kl(\partial_{t}^2-\Delta_g -\frac{(n-1)^2}{4}+\kappa^2\kr) \phi=F_p(\phi), \quad \kappa>0; \\ (\phi,\partial_t\phi)_{t=0}=\varepsilon(\psi_0,\psi_1),\qquad |F_p(\lambda)|+|\lambda||F_p'(\lambda)|\lesssim |\lambda|^p.
\end{cases}
\end{Eq}
This problem is closely related to the Strauss conjecture in Euclidean space, which concerns the existence of a critical exponent $p_S(n)$ separating global existence from finite-time blow-up for wave equations in $\rR^n$ with similar nonlinearities.

In particular, John \cite{MR526180} showed that in the case $n=3$, the critical exponent is $p_S = 1 + \sqrt{2}$: for $p > p_S$, global existence holds for small initial data, whereas for $1 < p < p_S$, solutions blow up in finite time. Later, Strauss \cite{MR614228} conjectured that in general spatial dimensions $n \geq 2$, the critical exponent $p_S(n)$ is given by the positive root of
\begin{Eq*}
(n-1)p^2-(n+1)p-2=0.
\end{Eq*} 
This conjecture has now been almost completely resolved;
see \cite{chengbo2017recent} and the references therein for a detailed overview of the current state of the field.
For positive-mass Klein-Gordon equations in Euclidean space, the mass term generally improves the small-data theory relative to the wave equation.
For scalar power-type nonlinearities depending only on the unknown, the Hamiltonian structure yields small-data global existence at least for $1<p<1+\frac{4}{n-2}$,
with the upper endpoint understood as $+\infty$ when $n=2$; see, for example, Keel-Tao \cite{MR1738405}.

As in the case of hyperbolic space, we first consider the limiting situation $\kappa = 0$.
For the standard hyperbolic space $\rH^n$,
it was observed in \cite{MR4026182}, building on the work of Georgiev-Lindblad-Sogge \cite{MR1481816} (or Tataru \cite{MR1804518}),
that global existence holds for $p \in (1, p_{conf})$, where $p_{conf}=1+\frac{4}{n-1}$, with small initial data.
(In fact, the Strichartz estimates established in \cite{MR2902129} already suffice to prove this result, although it was not explicitly stated there.)
For perturbed hyperbolic spaces,
the authors of \cite{MR4169670} also proved global existence for $p \in (1, p_{conf})$ using Strichartz estimates.
In addition, in our earlier work \cite{DaiWang2026},
we established global existence for small initial data in the spherically symmetric setting under more general assumptions.
There, we allowed space–time perturbations in the equation's coefficients with only polynomial decay,
and relied on local energy estimates rather than Strichartz-type tools.

In contrast, for the case $\kappa > 0$, 
as mentioned earlier,
the equation in this setting is analogous to the Klein-Gordon equation in Euclidean space.
For the standard hyperbolic space $\rH^n$, small-data global existence for exponents $p>1$ sufficiently close to $1$ was established in
\cite{MR2775816}, \cite{MR3254350} and \cite{MR2944727}.
For perturbed hyperbolic spaces,
the authors of \cite{MR4026182} also proved global existence for small initial data using standard energy methods,
under the assumption $p \in (1, 1 + \frac{2}{n - 2})$, with the upper endpoint understood as $+\infty$ when $n=2$.

In the present paper, we establish global existence and scattering for $p \in (1, 1 + \frac{4}{n-1})$ using Strichartz estimates.
While earlier energy-based arguments yield global existence, the present Strichartz framework also provides the spacetime control needed to obtain scattering.
The emphasis here is on reaching the lower range $p>1$ within the present Strichartz framework; we do not pursue the full expected range of powers.
More precisely, we prove the following:
\begin{theorem}\label{thm:strauss-type-global-existence}
Let $n \geq 2$, and assume that $\kappa > 0$ and $(M^\circ, g)$ are as in \cref{thm:main-strichartz-estimate}.  
Then, for any $p \in (1, 1 + \frac{4}{n-1})$ and $(\psi_0, \psi_1) \in H^{1/2}(M^\circ) \times H^{-1/2}(M^\circ)$,  
there exists a constant $\varepsilon_0 > 0$ such that the Cauchy problem \eqref{eq:strauss-type-nonlinear-equation} admits a unique global solution satisfying
\begin{Eq*}
(\phi,\partial_t\phi)\in L_t^\infty H^{1/2}\times L_t^\infty H^{-1/2},\qquad \phi\in L_{t,z}^{p+1},
\end{Eq*}
whenever $\varepsilon \in (0, \varepsilon_0]$.  

Moreover, let  $L(t)$ denote  the linear wave propagator defined by  
\begin{Eq*}
L(t)(\psi_0,\psi_1):=(\phi_h,\partial_t\phi_h)\quad satisfying\quad L(0)=Id,\quad \kl(\partial_t^2-\Delta_g -\frac{(n-1)^2}{4}+\kappa^2\kr)\phi_h=0.
\end{Eq*}
Then there exists $(\psi_0^+, \psi_1^+) \in H^{1/2} \times H^{-1/2}$ such that 
\begin{Eq}\label{eq:scattering-for-strauss-type-solution}
\lim_{t\rightarrow+\infty}\|(\phi,\partial_t\phi)-L(t)(\psi_0^{+},\psi_1^{+})\|_{H^{1/2}\times H^{-1/2}}=0.
\end{Eq}
\end{theorem}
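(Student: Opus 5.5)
We run a contraction-mapping argument in the space $X$ of functions with
\begin{Eq*}
\|\phi\|_X:=\|\phi\|_{L_t^\infty H^{1/2}}+\|\partial_t\phi\|_{L_t^\infty H^{-1/2}}+\|\phi\|_{L_{t,z}^{p+1}},
\end{Eq*}
and control it with \cref{thm:main-strichartz-estimate} taking $\mu=1/2$. The natural choice is $q=r=p+1$ on the left, with the dual pair $\tilde q=\tr=p+1$ on the right, so that $\tilde q'=\tr'=(p+1)/p$. The inhomogeneous term is then measured in $L_t^{(p+1)/p}H^{\mu+\tilde\mu-1}_{(p+1)/p}$. We want $\mu+\tilde\mu-1=0$, i.e.\ $\tilde\mu=1/2$, so that the right side is just $\|F\|_{L_{t,z}^{(p+1)/p}}$. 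The nonlinear estimate is then immediate:
\begin{Eq*}
\|F_p(\phi)\|_{L^{(p+1)/p}_{t,z}}\lesssim\|\phi\|_{L^{p+1}_{t,z}}^p,
\end{Eq*}
and similarly
\begin{Eq*}
\|F_p(\phi)-F_p(\varphi)\|_{L^{(p+1)/p}}\lesssim(\|\phi\|_{L^{p+1}}^{p-1}+\|\varphi\|_{L^{p+1}}^{p-1})\|\phi-\varphi\|_{L^{p+1}},
\end{Eq*}
using $|F_p'(\lambda)|\lesssim|\lambda|^{p-1}$ and H\"older's inequality.

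\textbf{Admissibility check (the main step).} We must verify that $(p+1,p+1,1/2)\in\Lambda_w\cup\Lambda_e$ for every $p\in(1,1+\frac4{n-1})$. Put $r=q=p+1>2$ and $\theta=\frac12-\frac1r\in(0,\frac12)$. There are two cases.

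In the first case, $\frac2r\le(n-1)\theta$, which is equivalent to $\frac1r\le\frac{n-1}{2(n+1)}$. The triple then lies in $\Lambda_w$ provided $\frac12>n\theta-\frac1r=\frac n2-\frac{n+1}{r}$, i.e.\ $\frac1r>\frac{n-1}{2(n+1)}$. This contradicts the case assumption, so the pair is never usable in $\Lambda_w$ and we must use $\Lambda_e$.

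In the second case, $\frac1r\ge\frac{n-1}{2(n+1)}$, and the triple lies in $\Lambda_e$ provided $\frac12>\frac{n+1}{2}\theta$, i.e.\ $\frac1r>\frac12-\frac1{n+1}=\frac{n-1}{2(n+1)}$. This gives
\begin{Eq*}
r<\frac{2(n+1)}{n-1},\qquad\text{i.e.}\qquad p<1+\frac4{n-1}=p_{conf}.
\end{Eq*}
Hence for every $p\in(1,p_{conf})$ we have $\frac1r>\frac{n-1}{2(n+1)}$ strictly, so the triple belongs to $\Lambda_e$ with strict inequalities, exactly as required. The same triple serves as $(\tilde q,\tr,\tilde\mu)$, and $\mu+\tilde\mu-1=0$ holds. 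This is the only delicate point: the strict inequality $\mu>s_e$ is what excludes the endpoint $p_{conf}$.

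\textbf{Existence and uniqueness.} Applying \eqref{eq:main-strichartz-estimate} with these choices, the map $\Phi(\phi)$, defined as the solution of the linear problem with forcing $F_p(\phi)$ and data $\varepsilon(\psi_0,\psi_1)$, satisfies
\begin{Eq*}
\|\Phi(\phi)\|_X\le C\varepsilon A+C\|\phi\|_X^p,\qquad \|\Phi(\phi)-\Phi(\varphi)\|_X\le C(\|\phi\|_X^{p-1}+\|\varphi\|_X^{p-1})\|\phi-\varphi\|_X,
\end{Eq*}
where $A=\|\psi_0\|_{H^{1/2}}+\|\psi_1\|_{H^{-1/2}}$. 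Since $p>1$, $\Phi$ is a contraction on the ball of radius $2C\varepsilon A$ for small $\varepsilon$. This yields a unique global solution in the ball, and uniqueness in $X$ follows by the standard argument on short intervals, using that the estimate holds for every $I=[0,T)$.

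\textbf{Scattering.} Set
\begin{Eq*}
(\psi_0^+,\psi_1^+):=(\varepsilon\psi_0,\varepsilon\psi_1)+\int_0^\infty L(-s)(0,F_p(\phi(s)))\,ds.
\end{Eq*}
By Duhamel's formula, $(\phi,\partial_t\phi)(t)-L(t)(\psi_0^+,\psi_1^+)=-\int_t^\infty L(t-s)(0,F_p(\phi))\,ds$. Its $H^{1/2}\times H^{-1/2}$ norm is bounded, via the energy part of \eqref{eq:main-strichartz-estimate} applied on $[t,\infty)$ (equivalently, the dual estimate, using that $L(t)$ is unitary in the energy norm), by
\begin{Eq*}
C\|F_p(\phi)\|_{L^{(p+1)/p}([t,\infty)\times M^\circ)}\lesssim\|\phi\|^p_{L^{p+1}([t,\infty)\times M^\circ)}.
\end{Eq*}
The right side tends to $0$ as $t\to\infty$ because $\phi\in L^{p+1}_{t,z}$. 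The same bound shows the integral defining $(\psi_0^+,\psi_1^+)$ converges in $H^{1/2}\times H^{-1/2}$.
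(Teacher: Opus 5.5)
Your proposal is correct and follows essentially the same route as the paper: the same triple $(p+1,p+1,\frac12)\in\Lambda_e$ (with $\mu+\tmu-1=0$) is fed into \cref{thm:main-strichartz-estimate}, followed by a fixed-point argument (the paper iterates only in $L_{t,z}^{p+1}$ and reads off the energy bound afterwards, whereas you contract in the full $X$-norm) and scattering from the tail bound $\|\phi\|_{L_{t,z}^{p+1}([t,\infty)\times M^\circ)}^p\to0$. The paper phrases the last step as a Cauchy criterion for $L(-T)\bphi(T)$, which is equivalent to your explicit Duhamel formula for $(\psi_0^+,\psi_1^+)$.
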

\begin{remark}
In fact, by adapting the arguments in \cite{MR3254350},
one can further enlarge the admissible range of $p$ under alternative regularity assumptions,
and establish the corresponding scattering results.
\end{remark}

We now turn to equations with nonlinear terms involving powers of derivatives, namely, 
\begin{Eq}\label{eq:glassey-type-derivative-equation}
\begin{cases}
\kl(\partial_{t}^2-\Delta_g -\frac{(n-1)^2}{4}+\kappa^2\kr) \phi=F_p(\partial_t\phi), \quad (t,z)\in \rR_+\times M^\circ; \\ (\phi,\partial_t\phi)_{t=0}=\varepsilon(\psi_0,\psi_1),\qquad |F_p(\lambda)|+|\lambda||F_p'(\lambda)|\lesssim |\lambda|^p.
\end{cases}
\end{Eq}
This type of equation is related to the so-called Glassey conjecture in Euclidean space,
which postulates the existence of a critical exponent $p_G(n)$ that separates global existence from finite-time blow-up
for wave equations in $\rR^n$ with such derivative-type nonlinearities.

More specifically, in his review of Sideris' work \cite{MR711440}, Glassey conjectured that the critical exponent should be $p_G = 1 + \frac{2}{n-1}$. 
A proof of the blow-up part of this conjecture was given by Zhou \cite{MR1845748}. 
Regarding global existence, the cases $n=2,3$ were treated independently by Hidano-Tsutaya \cite{MR1386769} and Tzvetkov \cite{MR1637692}. 
In higher dimensions, the sharp global-existence problem for general nonradial data remains open; radial-data results include, for example, Hidano-Wang-Yokoyama \cite{MR2980460}.
For general, not necessarily Hamiltonian, Klein-Gordon equations with nonlinearities of the form $F(u,u_t,\nabla u)$, energy heuristics suggest the threshold $p>1+\frac{2}{n}$ in the Euclidean setting; see Lindblad-Sogge \cite{LdSo96-2}, Keel-Tao \cite{MR1738405}, and the references therein.  The corresponding sharp theory for nonlinearities involving only derivatives has not yet been established in general.

For the case of hyperbolic space in the limiting situation $\kappa = 0$,
a related result in the hyperbolic limiting case was obtained in our previous work \cite{DaiWang2026}.
There, using local energy estimates,
we proved that in the spherically symmetric setting,
global existence for small initial data holds for all $p \in (1, 1 + \frac{2}{n - 2})$,
even when the metric includes certain perturbations.

In contrast, for the case $\kappa > 0$,
it was also observed in \cite{DaiWang2026} that the corresponding local energy estimates give rise to a linear error term,
which prevents a direct application of the earlier argument.
In the present paper,
we overcome this difficulty within a unified framework by making use of \cref{thm:main-strichartz-estimate}.
The goal here is not to identify a Glassey-type critical exponent, but to close the low-power derivative nonlinear problem in the positive-mass hyperbolic framework; the upper bound on $p$ below comes from the Hardy and local energy weights used in the proof.
Accordingly, we now state the assumptions on the metric $g$ in an alternative coordinate system, following the approach of \cite{DaiWang2026}.
The point of the additional structure is to connect two different tools: the Strichartz estimates of \cref{thm:main-strichartz-estimate}, which are formulated on $(M^\circ,g)$, and the Hardy-type inequalities and local energy estimates in the hyperbolic coordinate framework associated with $\eta$.
Thus, in the derivative nonlinear applications below, the metric is required to belong simultaneously to the asymptotically hyperbolic Strichartz framework and to a perturbative hyperbolic framework.
The Hardy-type estimate is recalled at the point where it is used, while the positive-mass local energy estimate is proved in Appendix~\ref{sec:appendix}.

The following assumption is used only for the derivative nonlinear application and is independent of the linear Strichartz theorem.
\begin{hypothesis}\label{hyp:small-asymptotic-decay}
It is well known that the metric $\eta$ of the standard hyperbolic space $\rH^n$ can be written in polar coordinates $(r, \Theta) \in \rR_+ \times \mathbb{S}^{n-1}$ as
\begin{Eq*}
\eta=\d r^2+\sinh^2 r\d\Theta^2.
\end{Eq*}

Let $I\subset\rR_+$ be an interval.  In the region $\{r\in I\}$, we assume that $g$ can be written in the same $(r,\Theta)$ coordinate system and compared with the standard hyperbolic metric $\eta$ there; the expression $g-\eta$ below is understood in this coordinate representation.
We say that $g$ satisfies \cref{hyp:small-asymptotic-decay} on $I$ if the metric perturbation $\gamma := g - \eta$ satisfies the following decay condition on $\{r\in I\}$:
for any $m_1, m_2 \in \mathbb{N}_0$, there exists a constant $C_{m_1, m_2}$ such that  
\begin{Eq*}
\kl|\varL_\Omega^{m_1}\nabla_\eta^{m_2} \gamma\kr|_\eta\leq \varepsilon'C_{m_1,m_2} e^{-br},\qquad |T|_\eta^2:=\kl<T,T\kr>_\eta,
\end{Eq*} 
with $b\in(0,1)$ and $\varepsilon' > 0$ small enough.
Here, $\nabla_\eta$ denotes the covariant derivative with respect to $\eta$, and $\mathcal{L}_\Omega$ denotes the Lie derivative along rotational vector fields.
\end{hypothesis}
\begin{remark}
Under \cref{hyp:small-asymptotic-decay} on the coordinate region under consideration, after taking $\varepsilon'$ sufficiently small, the metrics $g$ and $\eta$ are uniformly comparable in the $(r,\Theta)$ coordinate system.  In particular,
\begin{Eq*}
dV_g\approx dV_\eta,\qquad |T|_g\approx |T|_\eta,
\end{Eq*}
hold for any tensor $T$ in that region.  In particular, the corresponding spatial $L^p$ norms are equivalent for the exponents used below whenever this comparison is used.  This allows us to combine the $g$-based Strichartz estimates from \cref{thm:main-strichartz-estimate} with the $\eta$-based Hardy and local energy estimates used in the derivative nonlinear arguments.
\end{remark}

We now state the following global existence result together with the corresponding scattering property.
\begin{theorem}\label{thm:glassey-type-global-existence}
Let $n \geq 3$, and assume that $\kappa > 0$ and $(M^\circ, g)$ are as in \cref{thm:main-strichartz-estimate}, with $g$ spherically symmetric.  
The spherical symmetry is understood with respect to the same $(r,\Theta)$ coordinate system.
Furthermore, assume that $g$ satisfies \cref{hyp:small-asymptotic-decay} with $I=\rR_+$.
Then, for any $p \in (1, 1 + \frac{1}{n - 2})$ and spherically symmetric data $(\psi_0, \psi_1) \in H^2(M^\circ) \times H^1(M^\circ)$,  
there exists a constant $\varepsilon_0 > 0$ such that the Cauchy problem \eqref{eq:glassey-type-derivative-equation} admits a unique global solution $(\phi,\partial_t\phi)\in L_t^\infty H^2\times L_t^\infty H^1$ whenever $\varepsilon \in (0, \varepsilon_0]$.  

Moreover, let $L(t)$ denote  the linear wave propagator defined in \cref{thm:strauss-type-global-existence}.  
Then, there exists $(\psi_0^+, \psi_1^+) \in H^1 \times L^2$ such that  
\begin{Eq}\label{eq:scattering-for-glassey-type-solution}
\lim_{t\rightarrow+\infty}\|(\phi,\partial_t\phi)-L(t)(\psi_0^{+},\psi_1^{+})\|_{H^{1}\times L^2}=0.
\end{Eq}
\end{theorem}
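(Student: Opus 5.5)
The proof will be a contraction-mapping argument in a space built from the $H^2\times H^1$ energy, weighted local energy norms, and a Strichartz norm. The energy and Strichartz parts will come from \cref{thm:main-strichartz-estimate}, the local energy part from the positive-mass local energy estimate of Appendix~\ref{sec:appendix}, and the weights will be transferred by the radial Hardy-type inequality in the $\eta$-framework of \cref{hyp:small-asymptotic-decay}.

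\textbf{Step 1: the norm.} Since the data are spherically symmetric and $g$ is spherically symmetric, the solution stays radial. I will consider radial $\phi$ with the norm
\begin{Eq*}
\|\phi\|_X:=\sum_{|a|\le1}\Big(\|\partial_t^{a}\phi\|_{L_t^\infty H^{2-a}}+\|\<r\>^{-s}e^{\sigma r}\,\nabla^{\le1}\partial_t^{\le 1}\partial\phi\|_{L^2_{t,z}}\Big)+\|\partial_t\phi\|_{L_t^qL_z^r},
\end{Eq*}
where $\partial=(\partial_t,\partial_r)$. The weight exponent $\sigma\ge 0$ is chosen below $\frac{n-1}{2}$ times a factor tied to $p$, and $(q,r,\mu)\in\Lambda_w$ is chosen with $\mu\le1$, so that $\|\partial_t\phi\|_{L^qL^r}\lesssim\|(\phi,\partial_t\phi)\|$ follows by applying \cref{thm:main-strichartz-estimate} to $\partial_t\phi$, which solves the equation with forcing $\partial_tF$. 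Working with $\partial_t\phi$ rather than $\phi$ is possible because the equation commutes with $\partial_t$; the spatial derivatives will be recovered from elliptic regularity of $H_\kappa$, using that $\kappa^2\phi$ is controlled by the energy.

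\textbf{Step 2: linear bound.} I will show that $\|\phi\|_X\lesssim \varepsilon\|(\psi_0,\psi_1)\|_{H^2\times H^1}+N(F)$, where $N(F)$ is the dual norm
\begin{Eq*}
N(F)=\inf_{F=F_1+F_2}\Big(\|\partial_t^{\le1}F_1\|_{L^1_tL^2}+\|\<r\>^{s}e^{-\sigma r}\partial_t^{\le1}F_2\|_{L^2_{t,z}}\Big).
\end{Eq*}
The energy and local energy parts come from the appendix estimate applied to $\phi$ and $\partial_t\phi$. Any linear error term appearing there (the issue noted in \cite{DaiWang2026}) will be absorbed using the Strichartz/energy control with the positive mass, which is why $\kappa>0$ and \cref{thm:main-strichartz-estimate} are needed. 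The Strichartz part comes from \cref{thm:main-strichartz-estimate} with a Christ–Kiselev/duality argument against the local energy dual.

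\textbf{Step 3: nonlinear bound.} I will estimate $N(F_p(\partial_t\phi))$ and $N\big(F_p'(\partial_t\phi)\partial_t^2\phi\big)$. In the local energy dual norm,
\begin{Eq*}
\|\<r\>^{s}e^{-\sigma r}|\partial_t\phi|^{p-1}\partial\partial_t^{\le1}\phi\|_{L^2}\le \|\<r\>^{2s}e^{-2\sigma r}|\partial_t\phi|^{p-1}\|_{L^\infty}\,\|\phi\|_X.
\end{Eq*}
The $L^\infty$ factor will be bounded with the radial Sobolev/Hardy inequality in $\rH^n$, which gives $|u(r)|\lesssim e^{-\frac{n-1}{2}r}\|u\|_{H^1}$ for radial $u$, weighted appropriately. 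This turns $|\partial_t\phi|^{p-1}$ into $e^{-(p-1)\frac{n-1}{2}r}\|\phi\|_X^{p-1}$. The factor has to beat $e^{2\sigma r}$ together with the polynomial losses from the Hardy inequality, and matching the weights forces $(p-1)(n-2)<1$, i.e.\ $p<1+\frac1{n-2}$. Near $r=0$ there is no gain, so I will use Sobolev in $H^2$ in dimension $n$ (equivalently the $L^qL^r$ Strichartz norm), splitting the interior part into $F_1$ with $L^1_tL^2$ control via the time integrability of $\|\partial_t\phi\|_{L^r}^{p-1}$.

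\textbf{Step 4: closing and scattering.} Differences $F_p(a)-F_p(b)$ are handled in the same way, using $|F_p'(\lambda)|\lesssim|\lambda|^{p-1}$, but only in the lower norm ($H^1\times L^2$ level) to avoid needing $F_p''$. For small $\varepsilon$ this gives a contraction and hence global existence and uniqueness. For scattering I will define
\begin{Eq*}
(\psi_0^+,\psi_1^+)=(\psi_0,\psi_1)\varepsilon+\int_0^\infty L(-s)(0,F_p(\partial_t\phi(s)))\,ds,
\end{Eq*}
whose convergence in $H^1\times L^2$ follows because the tails of $N(F)$ over $[T,\infty)$ tend to zero. The main obstacle is Step 3, matching the exponential weights in the local energy norm with the radial decay of $\partial_t\phi$ while absorbing the mass-induced linear error term of Step 2. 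I will try first to put that error term into the Strichartz part, with $\mu$ strictly above $s_w$ but at most $1$.
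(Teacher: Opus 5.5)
You have a genuine gap, and it sits exactly where the restriction $p<1+\frac1{n-2}$ comes from: the origin $r=0$.

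\textbf{Near the origin.} Your interior plan puts $\mathbf{1}_{r\le1}F_p(\partial_t\phi)$ into $L^1_tL^2$ using time integrability. This cannot work for $p<2$. Every norm you control is $L^q$ in time with $q\ge2$ (energy in $L^\infty_t$, local energy in $L^2_t$, Strichartz with $q\ge2$). By H\"older, $|\partial_t\phi|^{p-1}\,\partial_t^{\le1}\partial_t\phi$ therefore lands at best in $L^{2/p}_t$, and $2/p>1$. Energy-class data give no better local time integrability. A pointwise bound at $r=0$ is not available either. The nonlinearity involves $\partial_t\phi\in L^\infty_tH^1$, not $H^2$, and radial $H^1$ functions in $n\ge3$ need not be bounded at the origin; the sharp rate is $r^{-\frac{n-2}{2}}$, as in \eqref{eq:hardy-type-inequality}.

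Your weight $\langle r\rangle^{-s}$ is bounded at $r=0$, so you have discarded the one thing the paper uses there. The local energy norm of \cref{prop:positive-mass-local-energy-estimate} carries the singular weight $r^{a-\frac12}$. The paper combines it with the Hardy trace bound $|\partial_t\phi(t,R)|\lesssim R^{\frac12}\sinh^{-\frac{n-1}{2}}R\,\|\tpartial\partial_t\phi(t)\|_{L^2}$. This lets it place all of $\partial_t^{\le1}F_p(\partial_t\phi)$ in $LE^*$ through $\|e^{2ar}\cdot\|_{L^2_{t,z}}$. The leftover weight is $r^{\frac p2-a}e^{3ar}\sinh^{-\frac{(p-1)(n-1)}{2}}r$. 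It is bounded if and only if $\frac12-a-\frac{(p-1)(n-2)}{2}\ge0$ near $r=0$ and $3a<\frac{(p-1)(n-1)}{2}$ at infinity. So $p<1+\frac1{n-2}$ is an origin condition. At large $r$ the exponentials dominate all polynomial losses and only constrain $a$, contrary to your explanation in Step 3.

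\textbf{The mass-induced error term.} Your treatment of $\|r^{a-\frac12}e^{-ar}\partial_t^{\le1}\phi\|_{L^2_{t,z}}$ also fails as announced. The term is $L^2$ in time on $\rR_+$, so among Strichartz norms only $q=2$ can control it, and no $\Lambda_w$ triple works. If $q>2$, you get no $L^2_t$ bound. If $q=2$ in $\Lambda_w$, then $\frac12-\frac1r\ge\frac1{n-1}$, so H\"older would need $e^{-ar}\in L^{p_0}(\rH^n)$ with $p_0\le n-1$. That requires $a>1$, incompatible with $2a<b<1$.

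The paper instead splits at a small radius $R_1$:
\begin{itemize}
\item On $r\le R_1$, the term is bounded by $R_1$ times the $r^{-1}$ component of the local energy norm, so it is absorbed. This step is needed because $r^{a-\frac12}\notin L^{n/a}$ near $0$ when $a<\frac14$.
\item On $r\ge R_1$, it uses $r^{a-\frac12}e^{-ar}\in L^{n/a}$ and the $\Lambda_e$ triple $(2,\frac{2n}{n-2a},\frac12)$ in both slots. The forcing $\|e^{2ar}F\|_{L^2_{t,z}}$ is converted into $L^2_tL^{\frac{2n}{n+2a}}$ by H\"older, so $\mu+\tilde\mu-1=0$.
\end{itemize}
This is the case $q=\tilde q'=2$, where Christ--Kiselev is unavailable. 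It relies on \cref{prop:inhomogeneous-strichartz-double-endpoint}, which is built into \cref{thm:main-strichartz-estimate}.

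\textbf{Weight sign.} The exponential weight in your $X$ has the wrong sign. A factor $e^{\sigma r}$ with $\sigma\ge0$ on $\partial\phi$ in $L^2_{t,z}$ is not controlled by any local energy estimate, since energy travels out to $r\sim t$. The decaying weight belongs on the solution and the growing one on the forcing.

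The rest of your plan matches the paper: iterating on $\partial_t^{\le1}\phi$ via $[\partial_t,P_g]=0$, contraction in the lower norm, and scattering from vanishing tails of the forcing.
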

\begin{remark}\label{rem:spherically-symmetric-asymptotically-hyperbolic-metrics}
For the standard hyperbolic metric $\frac{\d x^2 + (1 - x^2)^2 \d \Theta^2 / 4}{x^2}$,  
we may perform the change of variable $x = e^{-r}$, which transforms the metric into $\d r^2 + \sinh^2 r \, \d \Theta^2$.  

For spherically symmetric asymptotically hyperbolic metrics of the form $g = \frac{\d x^2 + h(x)\d \Theta^2}{x^2}$,  
we may apply the change of variables  $x=2\sqrt{h(0)}e^{-r}$ in $x\ll 1$, yielding
\begin{Eq*}
g=\d r^2+\frac{e^{2r}h(2\sqrt{h(0)}e^{-r})}{4h(0)}\d\Theta^2=\d r^2+\frac{e^{2r}}{4}(1+O(e^{-r}))\d\Theta^2=\eta +O(e^{-r})\sinh^2r\d\Theta^2.
\end{Eq*}
Here $O(e^{-r})$ denotes the relative error in the angular coefficient; equivalently, the perturbation has size $O(e^{-r})$ in the $\eta$-tensor norm.
Hence, the condition \cref{hyp:small-asymptotic-decay} holds for large $r$.
In other words, if $g$ is sufficiently close to $\eta$ also in the interior region, 
it naturally satisfies \cref{hyp:small-asymptotic-decay} with $I=\rR_+$.
\end{remark}

Based on the two results above, it is natural to expect that,
at least within the unified framework,
global existence for small initial data should also hold in dimensions $n \geq 3$
for more general smooth nonlinearities depending on the unknown, its time derivative, and its spatial exterior derivative.

For the wave equation in the Euclidean setting, consider standard quadratic nonlinearities in first derivatives, say $F(\partial\phi)$ with $F(0)=F'(0)=0$. It is well known that, for small initial data, global existence for this derivative-type problem holds when $n \geq 4$, whereas in three dimensions ($n=3$) an additional null condition is required (see Klainerman \cite{MR837683}).

In the hyperbolic space with the limiting case $\kappa = 0$,
we showed in our previous work \cite{DaiWang2026} that for $n \geq 3$,
global existence for small initial data holds without requiring any null condition, even for a class of quasilinear shifted-wave equations.

As for the case $\kappa > 0$, 
a related geometric result is due to Lawrie-Oh-Shahshahani \cite{MR3801478},
who proved global existence for small initial data in the wave map system on hyperbolic spaces of dimension $n \geq 4$.
Their formulation involves a geometric null structure, and should not be viewed as a direct special case of the scalar positive-mass model considered here.
Under the local energy hypothesis below, together with the exterior perturbative assumption used in \cref{hyp:small-asymptotic-decay}, \cref{thm:smooth-nonlinearity-global-existence} gives a corresponding semilinear small-data result without imposing any null condition.

Finally, in analogy with the discussion on asymptotically Euclidean settings,
we broaden our analytical framework.
We no longer require \cref{hyp:small-asymptotic-decay} with $I=\rR_+$.
Instead, in the interior region we assume a local energy hypothesis of the following form.

\begin{hypothesis}\label{hyp:local-energy}
Assume that the metric $g$ can be expressed in the $(r, \Theta)$ coordinate system,
and set
\begin{Eq*}
P_g:=\partial_t^2-\Delta_g-\frac{(n-1)^2}{4}+\kappa^2.
\end{Eq*}
For some fixed $0 < a \ll 1$ and any $R \in \rR_+$, we assume that
\begin{Eq*}
\|(\partial_t,\nabla_g^{\leq 1})\phi\|_{L_{t,z}^2(r<R)}\lesssim \|\psi_0\|_{H^1}+\|\psi_1\|_{L^2}+\|F\|_{L_{t,z}^2}+\|e^{-ar}\phi\|_{L_{t,z}^2},
\end{Eq*}
for any solution of $P_g\phi=F$ with initial data $(\psi_0, \psi_1)$ and forcing term $F(t,z)$ supported in $\{r \leq R \}$. 
The implicit constant here may depend on $a$ and $R$.
\end{hypothesis}
\begin{remark}
In the applications below, after fixing the exterior decay rate $b$ in \cref{hyp:small-asymptotic-decay}, we decrease $a$ if necessary so that $0<2a<b$.
\end{remark}
\begin{remark}
We expect that, under suitable non-trapping and spectral assumptions, the local energy estimate above should hold without the last term
$\|e^{-ar}\phi\|_{L_{t,z}^2}$.
We keep this term in \cref{hyp:local-energy} because it appears naturally in the uniformly local energy estimate for small perturbations proved in \cref{prop:positive-mass-local-energy-estimate}, and because it does not affect the subsequent nonlinear argument.

For compactly supported sufficiently small perturbations of the hyperbolic metric, the multiplier method in our earlier work \cite{DaiWang2026} should yield the same local estimate without this zeroth-order remainder on each fixed compact region.  Indeed, for a prescribed radius $R$ one may choose a bounded radial multiplier of the form
\begin{Eq*}
h(r)=1+\int_0^r \exp(-M\rho^2)\d\rho
\end{Eq*}
with $M$ sufficiently large, depending on $R$ and the compact support of the perturbation.  Since $h'=\exp(-Mr^2)$ is bounded from below on $[0,R]$, it controls the local spacetime norm there.  Moreover, in the zeroth-order part of the Morawetz identity, the choice $\tilde w=w-r^{-1}$ gives a positive contribution containing $-h''/r=2M\exp(-Mr^2)$, which absorbs the unfavorable mass term and the small compactly supported perturbative errors.  For more general asymptotically hyperbolic metrics, however, such a remainder-free estimate would require a separate resolvent or local energy argument, and we do not use it in the present paper.
\end{remark}

We now present the final theorem of this paper.
\begin{theorem}\label{thm:smooth-nonlinearity-global-existence}
Let $n \geq 3$, and assume that $\kappa > 0$ and that $(M^\circ, g)$ is as in \cref{thm:main-strichartz-estimate}.
Furthermore, suppose that the metric $g$ can be expressed in the $(r, \Theta)$ coordinate system, satisfies \cref{hyp:local-energy}, and satisfies \cref{hyp:small-asymptotic-decay} with $I=[R_0,+\infty)$ for some sufficiently large $R_0$.

Consider the equation
\begin{Eq}\label{eq:smooth-nonlinearity-equation}
\begin{cases}
\kl(\partial_{t}^2-\Delta_g -\frac{(n-1)^2}{4}+\kappa^2\kr) \phi=F_2(\phi,\partial_t\phi)+F_0(\phi,\partial_t\phi)(\d\phi,\d\phi)_g,\\
(\phi,\partial_t\phi)_{t=0}=\varepsilon(\psi_0,\psi_1).
\end{cases}
\end{Eq}
where $F_0,F_2\in C^\infty(\rR^2)$, $F_2(0,0)=DF_2(0,0)=0$, and $(\psi_0,\psi_1)\in C_0^\infty(M^\circ)\times C_0^\infty(M^\circ)$.
Then, there exists a constant $\varepsilon_0 > 0$ such that, for any $\varepsilon \in (0, \varepsilon_0]$, the equation admits a unique global smooth solution.
\end{theorem}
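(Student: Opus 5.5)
We argue by continuity (bootstrap). Local well-posedness for smooth compactly supported data is standard, and finite speed of propagation keeps the solution compactly supported at each time. It therefore suffices to close an a priori estimate for a norm $X(T)$ that is uniform in $T$. The nonlinearity has two parts. The term $F_2(\phi,\partial_t\phi)$ is at least quadratic in $(\phi,\partial_t\phi)$, and the term $F_0(\phi,\partial_t\phi)(\d\phi,\d\phi)_g$ is quadratic in the spatial gradient. We control both by combining three tools: the Strichartz estimate of \cref{thm:main-strichartz-estimate}, the local energy estimate of \cref{hyp:local-energy} in the interior $\{r<R\}$, and the positive-mass local energy estimate of Appendix~\ref{sec:appendix} in the exterior $\{r\ge R_0\}$, where \cref{hyp:small-asymptotic-decay} holds.

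The norm will be
\begin{Eq*}
X(T):=\sum_{|\alpha|\le N}\Big(\|\partial^{\alpha}\phi\|_{L_t^\infty(I;H^1)}+\|\partial_t\partial^{\alpha}\phi\|_{L_t^\infty(I;L^2)}+\|e^{-ar}(\partial_t,\nabla_g^{\le1})\partial^{\alpha}\phi\|_{L_{t,z}^2}+\|\partial^{\alpha}\phi\|_{L_t^2(I;L_z^{r_0})}\Big),
\end{Eq*}
with $N$ large. The vector fields $\partial^{\alpha}$ are built from $\partial_t$, the rotations $\Omega$ (used only for $r\ge R_0$, where $g$ is nearly $\eta$), and, in the interior, elliptic regularity through $H_\kappa$. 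The exponent $r_0>2$ is chosen with $(2,r_0,\mu)\in\Lambda_w$; this is possible since $\Lambda_w$ contains $q=2$ whenever $(n-1)(\frac12-\frac1{r_0})\ge1$ and $n\ge3$. Commuting $\partial_t$ with $P_g$ is free. In the exterior, commutators with $\Omega$ produce errors of size $\varepsilon' e^{-br}\nabla^{\le2}\phi$. Since $2a<b$, these are absorbed by the weighted local energy term once $\varepsilon'$ is small. Spatial derivatives in the interior are recovered from $\partial_t^{k}\phi$ through the equation and elliptic estimates for $H_\kappa$ on compact regions.

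To run the linear estimates we split $\phi=\chi\phi+(1-\chi)\phi$, where $\chi$ is a cutoff to $\{r<2R_0\}$. The interior piece is handled by \cref{hyp:local-energy}. Its remainder $\|e^{-ar}\phi\|_{L^2_{t,z}}$ is controlled by the Strichartz estimate, since the positive mass makes $H_\kappa$ elliptic at low frequencies, together with the exterior local energy estimate. The commutator $[P_g,\chi]$ is supported in a compact annulus and is paid for by the local energy norms. The exterior piece solves an equation on a near-hyperbolic background, so we apply the appendix estimate. This gives the linear bound $X(T)\lesssim \varepsilon+\|\partial^{\le N}(\text{RHS})\|_{L^1_tL^2+L^2_{t,z}(e^{ar}\,\cdot)}$.

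For the nonlinear terms we estimate quadratic products in $L^1_tL^2_z$, placing one factor in $L_t^2L_z^{r_0}$ or $L_t^2L_z^\infty$ and the other in $L_t^2 L_z^{2r_0/(r_0-2)}$. The second factor is controlled through Sobolev embedding on $M^\circ$; the asymptotically hyperbolic geometry has bounded geometry, so $H^{k}\subset L^\infty$ for $k>n/2$, which is where $N$ large enters. The gradient term $(\d\phi,\d\phi)_g$ is treated the same way, using Strichartz control of $\nabla\partial^{\alpha}\phi$, which comes from the $H^{\mu}$ scale with $\mu$ shifted by one. Alternatively, in the region where the weight matters, we bound it by $\|e^{-ar}\nabla\phi\|_{L^2_{t,z}}\|e^{ar}\nabla\phi\|_{L^\infty}$. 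Here the exponential volume growth must be offset by the $L^\infty$ decay available from the Strichartz pairs with $r=\infty$, which is where $n\ge3$ is used. These steps give $X(T)\lesssim\varepsilon+X(T)^2(1+X(T))^{C}$, and continuity then yields $X(T)\lesssim\varepsilon$ for all $T$, hence global existence. Uniqueness is immediate from the local theory.

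The main obstacle is the absence of a null condition in the quadratic gradient terms, which must be made integrable in time. I would first try to pair the weighted local energy norm with Strichartz $L^2_tL^\infty_z$ bounds on high derivatives. If the pure $L^2_t$ decay is insufficient, I would use the endpoint-avoiding pair $(2,r_0)$ together with Sobolev embedding, relying on the fact that the positive mass removes the low-frequency loss.
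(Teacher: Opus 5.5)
There is a genuine gap in your proposal, in two places: the top-order nonlinear estimate, and the gluing of the interior and exterior linear estimates.

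\textbf{Top-order nonlinear term.} The nonlinear estimate does not close at top order, and the decay mechanism you invoke for it is not available. When all $N$ commuting derivatives land on one factor of $(\d\phi,\d\phi)_g$, you get $\d\partial^N\phi$. Your norm $X(T)$ controls this only in $L^\infty_tL^2_z$ and in the local energy norm $\|e^{-ar}\cdot\|_{L^2_{t,z}}$. Your Strichartz component stops at $\partial^{\le N}\phi$. Bounding $\nabla\partial^N\phi$ in $L^2_tL^{r_0}_z$ by \cref{thm:main-strichartz-estimate} costs $\mu>s>0$ derivatives beyond the energy level in the forcing. Also, $L^1_tL^\infty_z$ is unavailable since $q\ge2$. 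So this term can only be handled by the pairing $\|e^{-ar}\d\partial^N\phi\|_{L^2_{t,z}}\,\|e^{2ar}\mathcal D\partial^{\le N/2+1}\phi\|_{L^\infty_{t,z}}$, which you mention as an alternative. That pairing needs a \emph{spatially weighted} pointwise bound on the low-order factor, uniform in time.

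Strichartz pairs with $r=\infty$ cannot supply it: \cref{thm:main-strichartz-estimate} bounds $\|\phi\|_{L^q_tL^\infty_z}$ with no weight in $r$, and neither does $H^k\subset L^\infty$. The missing ingredient is the hyperbolic trace/Hardy inequality \eqref{eq:hardy-type-inequality}, combined with Sobolev embedding on $\rS^{n-1}$ and applied to commuted energies. As in the paper, this gives $r^{-1/2}e^{\frac{n-1}{2}r}|\mathcal D Z^{\le\lfloor M/2\rfloor+1}\phi|\lesssim\varE$ for $r\ge R_0$. This decay beats the exponential weights (in the paper's normalization $e^{4ar}$) once $4a<\frac{n-1}{2}$.

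To get such a bound for $\d\phi$ and not only for $\phi$, the paper commutes with the hyperbolic translations $K_i$ as well as the rotations, using $|\d f|_\eta\lesssim|Zf|$ in the exterior. With only $\partial_t$ and $\Omega$ there, you would have to recover $\partial_r$-derivatives from the equation in weighted norms, which you do not address. In the paper, Strichartz enters only linearly, to control the zeroth-order remainders in the local energy estimates; it plays no role in the nonlinear bounds.

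\textbf{Interior/exterior gluing.} Your linear step is circular as written. Splitting $\phi=\chi\phi+(1-\chi)\phi$ and applying \cref{hyp:local-energy} and \cref{prop:positive-mass-local-energy-estimate} to the two pieces produces the forcing $\pm[P_g,\chi]\phi$. This term is $O(1)$, with no smallness. It is bounded only by the local energy of $\phi$ on the transition annulus, which is the quantity you are trying to estimate.

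The paper never cuts off $\phi$ itself; it solves auxiliary problems instead:
\begin{itemize}
\item $f_1$ solves $\square_g f_1=\chi_{4R_0}\square_g f$ with localized data.
\item $f_{21'}$ solves the exterior problem globally for the glued metric $\tg=\eta+(1-\chi_{R_0})\gamma$. By \cref{prop:positive-mass-local-energy-estimate} plus the H\"older--Strichartz bound, it is controlled by data and forcing alone.
\item The remainder $f_{22}=f-f_1-(1-\chi_{2R_0})f_{21'}$ has compactly supported forcing $[\chi_{2R_0},\Delta_{\tg}]f_{21'}$, which depends only on the already controlled $f_{21'}$.
\end{itemize}
Hence \cref{hyp:local-energy} can be applied to $f_{22}$ without circularity.
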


\begin{remark}
Here $\d\phi$ denotes the spatial exterior derivative of $\phi$, so it is a spatial covector.
The argument can also apply to more general smooth nonlinearities of the form $F(\phi,\partial_t\phi,\d\phi)$,
provided that a precise convention is fixed for the covector variable, 
for instance through a fixed background trivialization, 
or more intrinsically as a fiberwise function on $T^*M^\circ$. To avoid these additional notational complications, we restrict ourselves here to the representative special form $F_0(\phi,\partial_t\phi)(\d\phi,\d\phi)_g$.
We emphasize that the nonlinear term in \eqref{eq:smooth-nonlinearity-equation} is not a null form, and no null condition is imposed.
\end{remark}

The paper is organized as follows.
In \cref{sec:preparation}, we provide preliminary material, including notations and properties of the spectral measure associated with $-\Delta_g$.
In \cref{sec:low-frequency-half-wave-strichartz} and \cref{sec:high-frequency-half-wave-strichartz}, we establish Strichartz estimates for the low-frequency and localized high-frequency components of the half-wave propagator, respectively.
Then, in \cref{sec:strichartz-estimates-for-main-equation}, we combine these two results to derive the full Strichartz estimate for the linear equation \eqref{eq:positive-mass-wave-equation}, stated in \cref{thm:main-strichartz-estimate}.
Finally, in \cref{sec:applications}, we present several applications of \cref{thm:main-strichartz-estimate} to different classes of nonlinear wave equations, stated in \cref{thm:strauss-type-global-existence}, \cref{thm:glassey-type-global-existence} and \cref{thm:smooth-nonlinearity-global-existence}.
The Appendix contains the proof of the local energy estimates used in the derivative nonlinear applications.

\section{Preparation}\label{sec:preparation}

First, we introduce a dyadic partition of unity:
$1=\chi(s)+\sum_{j\geq 1}\beta_j(s)$ for $s\geq 0$, where
$\chi$ is smooth on $[0,+\infty)$, equals $1$ on $[0,1]$, and is supported in $[0,2]$,
and $\beta_j(s):=\chi(2^{-j}s)-\chi(2^{-j+1}s)$ is supported in $[2^{j-1},2^{j+1}]$.

We next recall the spectral measure estimates for $H_0$ that will be used below; see, for example, \cite{MR3473907,MR3805767}.
\begin{proposition}
Under the geometric and spectral assumptions of \cref{thm:main-strichartz-estimate}, the following low-frequency and microlocal high-frequency spectral measure estimates hold.
For the low frequency, i.e. $\lambda\leq 2$, the Schwartz kernel of the spectral measure $E_{\sqrt{H_0}}'(\lambda;z,z')$ satisfies
\begin{Eq}\label{eq:spectral-measure-low-frequency-expansion}
E_{\sqrt{H_0}}'(\lambda;z,z')=\lambda\kl((\rho_L\rho_R)^{\frac{n-1}{2}+\II \lambda}a(\lambda;z,z')-(\rho_L\rho_R)^{\frac{n-1}{2}-\II \lambda}a(-\lambda;z,z')\kr),
\end{Eq}
where $a\in C^\infty([-2,2]\times M_0^2)$ and $\rho_L$ and $\rho_R$ are respectively the boundary defining functions for the left and right boundary in the double space $M_0^2$. 
Furthermore, there holds
\begin{Eq}\label{eq:spectral-measure-low-frequency-estimate}
|E_{\sqrt{H_0}}'(\lambda;z,z')|\lesssim \lambda^2\big(1+d(z,z')\big)e^{-(n-1)d(z,z')/2}.
\end{Eq}

For the high frequency, i.e. $\lambda\geq 1/2$, 
there exists a finite pseudo-differential operator partition of the identity operator
\begin{Eq}\label{eq:microlocal-partition-of-identity}
\mathrm{Id}=\sum_{k=1}^KQ_k(\lambda),
\end{Eq}
where the $Q_k$ are uniformly bounded as operators on $L^2$ and $K$ is independent of $\lambda$, such that
\begin{Eq}\label{eq:microlocalized-spectral-measure-expansion}
\kl(Q_k(\lambda)E_{\sqrt{H_0}}'(\lambda)Q_k^*(\lambda)\kr)(z,z')=\lambda^{n-1}\sum_\pm e^{\pm\II\lambda d(z,z')}a_{\pm}(\lambda;z,z')+b(\lambda;z,z'). 
\end{Eq}
Meanwhile, for any $\alpha\in\rN_0$, the amplitudes $a_{\pm}$ satisfy
\begin{Eq}\label{eq:oscillatory-amplitude-estimate}
\kl|\partial_\lambda^\alpha a_{\pm}(\lambda;z,z')\kr|\lesssim\begin{cases}
\lambda^{-\alpha}\big(1+\lambda d(z,z')\big)^{-\frac{n-1}{2}},&d(z,z')\leq 1,\\
\lambda^{-\frac{n-1}{2}-\alpha}e^{-(n-1)d(z,z')/2},&d(z,z')\geq 1,
\end{cases}
\end{Eq}
with the implicit constant depending only on $\alpha$, and $b$ satisfies
\begin{Eq}\label{eq:remainder-amplitude-estimate}
\kl|\partial_\lambda^\alpha b(\lambda;z,z')\kr|\lesssim
\lambda^{-M-\alpha}e^{-(n-1)d(z,z')/2},
\end{Eq}
for any $M\geq 0$ and the implicit constant depending only on $\alpha$ and $M$.

Moreover, if $(M^\circ,g)$ is additionally simply connected with nonpositive sectional curvature, 
then the same estimates hold for the spectral measure without microlocalization. 
That is, in this case we can take $\{Q_k(\lambda)\}$ to be the trivial partition of unity.
\end{proposition}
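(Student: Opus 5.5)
This proposition is a compilation of the spectral measure results of Chen-Hassell \cite{MR3473907,MR3805767}, which build on the resolvent construction of Mazzeo-Melrose \cite{MR916753}. I would therefore prove it by reduction to those works, checking carefully that their hypotheses match ours. The starting point in every regime is Stone's formula in the variable $\lambda=\sqrt{\sigma-\frac{(n-1)^2}{4}}$:
\begin{Eq*}
E_{\sqrt{H_0}}'(\lambda)=\frac{\lambda}{\pi\II}\Big(R\big(\tfrac{n-1}{2}+\II\lambda\big)-R\big(\tfrac{n-1}{2}-\II\lambda\big)\Big),\qquad R(s):=\big(-\Delta_g-s(n-1-s)\big)^{-1}.
\end{Eq*}
Here $R(s)$ is the Mazzeo-Melrose resolvent, continued meromorphically across the critical line. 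This reduces every claim to kernel bounds for $R(\frac{n-1}{2}\pm\II\lambda)$ on the $0$-double space $M_0^2$.

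\emph{Low frequency.} Mazzeo-Melrose show that $R(s)$ is polyhomogeneous on $M_0^2$ with leading behaviour $(\rho_L\rho_R)^s$ at the left and right faces. The assumptions of the proposition, namely no $L^2$ eigenvalues below $\frac{(n-1)^2}{4}$ and no resonance of $H_0$ at zero, ensure that $R(\frac{n-1}{2}\pm\II\lambda)$ is holomorphic in a neighbourhood of $\lambda\in[-2,2]$, including $\lambda=0$.

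Writing $R(\frac{n-1}{2}+\II\lambda)=(\rho_L\rho_R)^{\frac{n-1}{2}+\II\lambda}a(\lambda)$ with $a$ smooth, the $-\II\lambda$ branch is then exactly the term with $a(-\lambda)$. Multiplying by the prefactor $\lambda$ from Stone's formula gives \eqref{eq:spectral-measure-low-frequency-expansion}. The diagonal singularity of the resolvent cancels in the difference, since it is even in $\lambda$, so $a$ is smooth up to the diagonal.

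For \eqref{eq:spectral-measure-low-frequency-estimate}, I would use $\rho_L\rho_R\approx e^{-d(z,z')}$ away from the front face. Expanding $(\rho_L\rho_R)^{\pm\II\lambda}$ and $a(\pm\lambda)$ to first order around $\lambda=0$, the difference vanishes at $\lambda=0$. Its $\lambda$-derivative there brings down a factor $\log(\rho_L\rho_R)\approx -d(z,z')$, which yields $\lambda\cdot\lambda(1+d)e^{-(n-1)d/2}$. A uniform bound on $\partial_\lambda^2$ controls the remainder in the same way.

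\emph{High frequency.} Here I would follow Chen-Hassell's semiclassical construction with $h=\lambda^{-1}$. Non-trapping ensures that the semiclassical resolvent parametrix built on the $0$-double space is a Lagrangian distribution associated to the geodesic flow, with no contributions trapped in a compact set.

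Because conjugate points may exist, the Lagrangian need not project diffeomorphically onto the base. One therefore chooses a finite microlocal partition $\mathrm{Id}=\sum_k Q_k(\lambda)$ by semiclassical pseudodifferential operators, fine enough that on each $\operatorname{WF}(Q_k)\times\operatorname{WF}(Q_k)$ the relevant piece of the Lagrangian is parametrised by the phase $\pm d(z,z')$. Stationary phase in the fibre variables then gives the oscillatory form \eqref{eq:microlocalized-spectral-measure-expansion}, with symbol bounds $(1+\lambda d)^{-\frac{n-1}{2}}$ for $d\le1$.

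For $d\ge1$, the exponential factor $e^{-(n-1)d/2}$ in \eqref{eq:oscillatory-amplitude-estimate} comes from the boundary behaviour $\rho^{\frac{n-1}{2}\pm\II\lambda}$ at the left and right faces. The residual term $b$ is $O(\lambda^{-\infty})$ with the same exponential weight, which gives \eqref{eq:remainder-amplitude-estimate}. Each $\partial_\lambda$ derivative acting on the amplitudes costs $\lambda^{-1}$, because the amplitudes are classical symbols in $\lambda$.

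\emph{Nonpositive curvature.} If $M^\circ$ is simply connected with nonpositive curvature, Cartan-Hadamard implies there are no conjugate points. The exponential map is then a global diffeomorphism, so the Lagrangian is globally parametrised by $\pm d$ and the trivial partition suffices.

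\emph{Main obstacle.} I expect the hardest step to be the uniformity of the high-frequency construction near the corner where the front face meets the left and right faces: one must obtain both the sharp $e^{-(n-1)d/2}$ decay and symbol-type $\lambda$-derivative bounds simultaneously. This is precisely the technical core of \cite{MR3473907,MR3805767}. In the proof I would cite it rather than redo it, verifying only that our spectral assumptions coincide with theirs.
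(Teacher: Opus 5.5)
Your proposal is correct and follows the paper's route: the paper does not reprove this proposition but recalls it from Chen--Hassell \cite{MR3473907,MR3805767}, which rests on the Mazzeo--Melrose resolvent, and that is the reduction you describe. One small fix to your low-frequency sketch: since $\partial_\lambda^2(\rho_L\rho_R)^{\pm\II\lambda}$ has size $\ln^2(\rho_L\rho_R)\approx d^2$, a second-order Taylor remainder would only give $\lambda^3(1+d)^2e^{-(n-1)d/2}$, which is not $\lesssim\lambda^2(1+d)e^{-(n-1)d/2}$ when $\lambda d\gg1$, so instead apply the mean value theorem to the odd function $(\rho_L\rho_R)^{\II\lambda}a(\lambda)-(\rho_L\rho_R)^{-\II\lambda}a(-\lambda)$, whose first $\lambda$-derivative is $O(1+|\ln(\rho_L\rho_R)|)$ uniformly because $|(\rho_L\rho_R)^{\II\mu}|=1$ for real $\mu$.
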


In order to study the equation \eqref{eq:positive-mass-wave-equation}, we are led to focus on the operator $H_\kappa = H_0 + \kappa^2$.  
For any $\sigma \in \rR$, we define the weighted half-wave propagator by  
\begin{Eq*}
U^\sigma(t):=&e^{\II t\sqrt{H_\kappa}}H_\kappa^{-\sigma/2}=\int_0^\infty e^{\II t\sqrt{\lambda^2+\kappa^2}}\sqrt{\lambda^2+\kappa^2}^{-\sigma}E_{\sqrt{H_0}}'(\lambda)\d\lambda,
\end{Eq*}
We further decompose this operator into its low-frequency and localized high-frequency parts:  
\begin{Eq*}
U^{\sigma,low}(t):=&\int_0^\infty e^{\II t\sqrt{\lambda^2+\kappa^2}}\chi(\lambda)\sqrt{\lambda^2+\kappa^2}^{-\sigma}E_{\sqrt{H_0}}'(\lambda)\d\lambda,\\
U^{\sigma,high}(t):=&U^\sigma(t)-U^{\sigma,low}(t)=\sum_{k=1}^K\sum_{j=1}^\infty U_{j,k}^{\sigma,high}(t),\\
U_{j,k}^{\sigma,high}(t):=&\int_0^\infty e^{\II t\sqrt{\lambda^2+\kappa^2}}\beta_j(\lambda)\sqrt{\lambda^2+\kappa^2}^{-\sigma}Q_k(\lambda)E_{\sqrt{H_0}}'(\lambda)\d\lambda,
\end{Eq*}
where the operators $Q_k$ are defined in \eqref{eq:microlocal-partition-of-identity}.
In the $TT^*$ argument below, this microlocalized definition is used in the standard sense of the spectral measure decomposition above, so that the kernel of $U_{j,k}^{\sigma,high}(t)[U_{j,k}^{\sigma,high}]^*(\tau)$ is expressed through $[Q_k(\lambda)E_{\sqrt{H_0}}'(\lambda)Q_k^*(\lambda)]$.

Finally, we recall the mixed-exponent Kunze-Stein type inequality in the kernel-dominated form used below.  The hyperbolic-space version follows from \cite[Lemma~5.1]{MR2902129}; for the localization of exponentially decaying kernels to the asymptotically hyperbolic setting, see the proof of \cite[Lemma~2.1]{MR4169670}.
\begin{proposition}\label{prop:kunze-stein-convolution-inequality}
Let $T$ be an integral operator whose kernel is dominated by a radial function $K(d(z,z'))$.  For every $2<r,\tr<\infty$, there holds
\begin{Eq*}
\|Tf\|_{L_z^r}\lesssim \|f\|_{L_z^{\tr'}}\kl(\int_0^\infty  (1+R)^{\alpha}e^{(n-1)\kl(1-\frac{\alpha}{2}\kr)R}|K(R)|^{\theta}\d R\kr)^{\frac{1}{\theta}},
\end{Eq*}
where $\alpha=\frac{2\min\{r,\tr\}}{r+\tr}$ and $\theta=\frac{r\tr}{r+\tr}$.
\end{proposition}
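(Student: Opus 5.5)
The plan is to reduce the inequality to the classical Kunze--Stein estimate on $\rH^n$ and then transplant it to $(M^\circ,g)$. Since the kernel of $T$ is dominated by $K(d(z,z'))$, we have $|Tf|\le \int |K(d(z,z'))||f(z')|\,\d V_g(z')$ pointwise. It therefore suffices to prove the estimate for the positive operator $T_K f(z):=\int |K|(d(z,z'))f(z')\,\d V_g(z')$ acting on $f\ge0$.

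First we treat the model case $M^\circ=\rH^n$, where $T_K f=f*|K|$ is convolution with a radial kernel. The key input is the sharp Kunze--Stein/Herz principle, \cite[Lemma~5.1]{MR2902129}. For $2<r,\tr<\infty$, it bounds the $L^{\tr'}\to L^r$ norm of convolution with a radial kernel by
\begin{Eq*}
\Bigl(\int_0^\infty \varphi_0(R)^{\alpha}|K(R)|^{\theta}\sinh^{n-1}\!R\,\d R\Bigr)^{1/\theta},
\end{Eq*}
with $\varphi_0$ the ground spherical function. Inserting $\varphi_0(R)\approx(1+R)e^{-(n-1)R/2}$ and $\sinh^{n-1}R\approx e^{(n-1)R}$ for $R\ge1$ yields exactly the weight $(1+R)^{\alpha}e^{(n-1)(1-\alpha/2)R}$. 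For $R\le1$ the two weights are comparable up to the factor $R^{n-1}\lesssim1$, which only helps.

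Structurally, the proof of that lemma proceeds in three steps.
\begin{itemize}
\item The diagonal case $r=\tr$ comes from the $L^2$ Kunze--Stein bound $\|f*k\|_{L^2}\lesssim\|f\|_{L^2}\int|k|\varphi_0$, interpolated with the trivial bounds $L^1\to L^\infty$ (via $\sup|k|$) and Young's inequality.
\item The off-diagonal exponents come from applying Riesz--Thorin/Stein interpolation on the dyadic annular pieces $K\mathbf 1_{\{j\le R<j+1\}}$.
\item The pieces are then summed in $\ell^\theta$ via H\"older in $j$; this is what produces the exponents $\alpha=2\min\{r,\tr\}/(r+\tr)$ and $\theta=r\tr/(r+\tr)$.
\end{itemize}
I would take this hyperbolic result as given.

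Next we localize to a general non-trapping asymptotically hyperbolic $M^\circ$, following the proof of \cite[Lemma~2.1]{MR4169670}. Split $K=K\mathbf 1_{R\le1}+K\mathbf 1_{R>1}$. For the near-diagonal part, uniform bounded geometry (volumes of unit balls are uniformly comparable to Euclidean ones) together with Schur's test and Young-type interpolation gives $L^{\tr'}\to L^r$ boundedness. The constant is controlled by $(\int_0^1|K|^\theta\,\d R)^{1/\theta}$, up to the harmless factor $R^{n-1}$.

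For the far part, the plan is to dominate $\mathbf 1_{\{d(z,z')\sim j\}}$ by model quantities, in three steps.
\begin{itemize}
\item Using the boundary-defining-function description of $g$ on the collar, we have $\mathrm{Vol}(B(z,R))\lesssim e^{(n-1)R}$ uniformly.
\item The analogue of the $L^2$ Kunze--Stein bound on each annulus is $\|T_{\mathbf 1_{d\sim j}}\|_{L^2\to L^2}\lesssim (1+j)e^{(n-1)j/2}$. I would obtain it by Schur's test with test function $e^{-(n-1)d(o,\cdot)/2}$ type weights, or, as in \cite{MR4169670}, by comparison on the double space $M_0^2$ via $\rho_L\rho_R$.
\item Interpolating annulus-by-annulus and summing in $\ell^\theta$ as in the hyperbolic case then gives the claimed integral.
\end{itemize}

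The main obstacle is the $L^2$ annular bound with the sharp factor $(1+j)e^{(n-1)j/2}$ on a variable-curvature manifold, since the exact spherical-function argument is unavailable there. I would first try Schur's test with the weight $w(z)=(\rho(z))^{(n-1)/2}$ mimicking $\varphi_0$, checking the double-space integrals near the front face. If that only gives a loss of a polynomial in $j$, I would absorb it by noting that only the mild factor $(1+R)^\alpha$ needs to be matched, possibly at the cost of adjusting exponents via interpolation.
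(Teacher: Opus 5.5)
Your top-level route is the paper's: the paper gives no argument of its own and only cites \cite[Lemma~5.1]{MR2902129} for $\rH^n$ and the localization in the proof of \cite[Lemma~2.1]{MR4169670}. Your reduction on $\rH^n$ via $\varphi_0(R)\approx(1+R)e^{-(n-1)R/2}$ is correct.

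\textbf{The gap: summing annuli.} The problem is in the localization you actually sketch. Interpolating annulus by annulus and adding the pieces with the triangle inequality gives
\[
\|Tf\|_{L^r}\lesssim\|f\|_{L^{\tilde r'}}\sum_{j\geq0}b_j,\qquad b_j:=\Bigl(\int_j^{j+1}(1+R)^{\alpha}e^{(n-1)(1-\frac{\alpha}{2})R}|K(R)|^{\theta}\,\mathrm{d}R\Bigr)^{1/\theta}.
\]
This is an $\ell^1$ sum, while the proposition asserts the $\ell^\theta$ sum $(\sum_j b_j^\theta)^{1/\theta}$. Since $1/\theta=1/r+1/\tilde r<1$, the former is not controlled by the latter: for $K$ with $b_j=(1+j)^{-1}$ the proposition's right-hand side is finite and your bound is infinite.

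H\"older in $j$ recovers $\ell^\theta$ only after inserting a summable factor. That amounts to multiplying the weight by $(1+R)^{\theta-1+\epsilon}$. This loss cannot be ``absorbed into $(1+R)^\alpha$'', because that factor is already the sharp one coming from $\varphi_0^\alpha$. Such a lossy estimate would suffice for the paper's applications, whose integrands decay exponentially, but it is not the stated proposition.

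Two related points. First, an operator bound for the indicator kernel $\mathbf{1}_{\{j\le d<j+1\}}$ controls $K\mathbf{1}_{\{j\le d<j+1\}}$ only through $\sup_{[j,j+1)}|K|$, not through $\int_j^{j+1}|K|^\theta$. Second, the same objection applies to your dyadic description of the hyperbolic proof. There the exponents $\alpha,\theta$ come from interpolating the whole kernel, not from summing pieces.

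\textbf{What is needed instead.} Avoid any decomposition in $R$ and interpolate in the kernel variable, as on $\rH^n$. For the positive operator $T_K$, prove on $M^\circ$ the endpoint bounds
(i) $\|T_K\|_{L^2\to L^2}\lesssim\int_0^\infty|K(R)|\varphi_0(R)\sinh^{n-1}R\,\mathrm{d}R$,
(ii) $\|T_K\|_{L^1\to L^\infty}\le\sup|K|$, and
(iii) $\|T_K\|_{L^{s'}\to L^\infty}+\|T_K\|_{L^1\to L^s}\lesssim(\int_0^\infty|K(R)|^s\sinh^{n-1}R\,\mathrm{d}R)^{1/s}$.

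Bilinear complex interpolation in $(f,K)$ with change of measure then proceeds in two steps. Interpolating (i) and (ii) gives the diagonal case, with weight $\varphi_0\sinh^{n-1}R$ and exponent $r/2$. Interpolating that with (iii) gives exactly the weight $\varphi_0^{\alpha}\sinh^{n-1}R$, with the stated $\alpha$ and $\theta$ and no summation in $j$.

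Via Schur's test and the coarea formula, the endpoints reduce to two uniform sphere estimates:
\[
\int_{S(z,R)}\rho^{(n-1)/2}\,\mathrm{d}\sigma\lesssim\varphi_0(R)\sinh^{n-1}R\,\rho(z)^{(n-1)/2}\quad\text{for (i)},\qquad |S(z,R)|\lesssim\sinh^{n-1}R\quad\text{for (iii)}.
\]
Your Schur weight is the right one. On $\rH^n$, with $h=e^{-(n-1)d(o,\cdot)/2}=\rho^{(n-1)/2}$, one finds for $R\ge1$
\[
\int_{S(z,R)}h\,\mathrm{d}\sigma\approx(1+\min\{R,d(o,z)\})\,e^{(n-1)R/2}\,h(z),
\]
where the logarithm comes from the angular integral. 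So there is no extra polynomial loss to worry about in the model case.

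Proving these two sphere estimates uniformly on a variable-curvature asymptotically hyperbolic manifold is the actual geometric content of the localization. Your proposal leaves that open.
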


\section{Strichartz estimates for the low-frequency half-wave propagator}\label{sec:low-frequency-half-wave-strichartz}

In this section, 
we establish that the low-frequency component $U^{\sigma,low}(t)$ of the half-wave propagator satisfies the following estimate.
\begin{proposition}\label{prop:low-frequency-half-wave-strichartz}
Let $U^{\sigma,low}(t)$ be defined as in \cref{sec:preparation}. 
Then, for any pair $(q, r) \in [2, \infty] \times (2, \infty]$ and any $\sigma \in \rR$, we have  
\begin{Eq*}
\kl\|U^{\sigma,low}(t)f\kr\|_{L_t^qL_z^r}\lesssim \|f\|_{L^2}.
\end{Eq*}
The implicit constant may depend on the fixed parameter $\kappa>0$, on $\sigma$, and on the geometry of $(M^\circ,g)$.
\end{proposition}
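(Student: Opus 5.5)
We argue by the standard $TT^*$ method. Since $\sqrt{\lambda^2+\kappa^2}^{-\sigma}\chi(\lambda)$ is bounded above and below on the support of $\chi$, the weight $\sigma$ plays no essential role: it only enters as a smooth amplitude factor. The estimate $\|U^{\sigma,low}(t)f\|_{L_t^qL_z^r}\lesssim\|f\|_{L^2}$ is equivalent to the bound
\begin{Eq*}
\kl\|\int U^{\sigma,low}(t)[U^{\sigma,low}]^*(\tau)F(\tau)\d\tau\kr\|_{L_t^qL_z^r}\lesssim \|F\|_{L_t^{q'}L_z^{r'}}.
\end{Eq*}
The operator $U^{\sigma,low}(t)[U^{\sigma,low}]^*(\tau)$ has kernel
\begin{Eq*}
K_{t-\tau}(z,z')=\int_0^\infty e^{\II(t-\tau)\sqrt{\lambda^2+\kappa^2}}\chi^2(\lambda)(\lambda^2+\kappa^2)^{-\sigma}E'_{\sqrt{H_0}}(\lambda;z,z')\d\lambda .
\end{Eq*}
The goal is a pointwise dispersive bound of the form
\begin{Eq*}
|K_t(z,z')|\lesssim (1+|t|)^{-3/2}(1+d(z,z'))^{N}e^{-(n-1)d(z,z')/2}
\end{Eq*}
for some finite $N$. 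Since $(1+|t|)^{-3/2}$ is integrable in $t$, Young's inequality in time closes the argument for every $q\in[2,\infty]$, including $q=\infty$. It therefore suffices to show that the spatial operator with kernel $(1+d)^Ne^{-(n-1)d/2}$ maps $L^{r'}\to L^r$ for all $2<r\le\infty$. By \cref{prop:kunze-stein-convolution-inequality} with $\tr=r$, we have $\alpha=1$ and $\theta=r/2$, so the relevant integral is $\int(1+R)^{1+N r/2}e^{(n-1)R/2-(n-1)rR/4}\d R$. This is finite exactly when $r>2$. The case $r=\infty$ is trivial from boundedness of the kernel.

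For the dispersive bound, the case $|t|\lesssim1$ follows directly from \eqref{eq:spectral-measure-low-frequency-estimate}. For $|t|\ge1$ we use the expansion \eqref{eq:spectral-measure-low-frequency-expansion}. Writing $\rho=\rho_L\rho_R$, so that $-\log\rho\approx d(z,z')$ up to bounded errors, the kernel becomes a sum of oscillatory integrals
\begin{Eq*}
\rho^{\frac{n-1}{2}}\int e^{\II(t\sqrt{\lambda^2+\kappa^2}\pm\lambda\log\rho)}\lambda\chi^2(\lambda)(\lambda^2+\kappa^2)^{-\sigma}a(\pm\lambda;z,z')\d\lambda .
\end{Eq*}
A cleaner route is to combine the two terms into a single integral over $\lambda\in[-2,2]$ with odd-symmetric amplitude $\lambda\,b(\lambda)$, where $b(\lambda)=\rho^{\frac{n-1}{2}+\II\lambda}a(\lambda)$ is smooth. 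The phase is $\Phi(\lambda)=t\sqrt{\lambda^2+\kappa^2}+\lambda\log\rho$. The key point is that the mass makes $\Phi''=t\kappa^2(\lambda^2+\kappa^2)^{-3/2}\approx t$ nondegenerate uniformly on $[-2,2]$, in contrast with the linear phase of the shifted wave case.

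Stationary phase is then applied with the vanishing of the amplitude $\lambda\,a$ at $\lambda=0$ taken into account. There is a stationary point $\lambda_0$ solving $t\lambda_0/\sqrt{\lambda_0^2+\kappa^2}=-\log\rho$, so $\lambda_0\approx -\log\rho/t$ when $|\log\rho|\ll t$. The van der Corput lemma with amplitude gives $t^{-1/2}\big(|\lambda_0|+t^{-1/2}\big)$ together with derivative terms. This yields $|K_t|\lesssim t^{-3/2}(1+d)\rho^{\frac{n-1}{2}}$ whenever $d\lesssim t$.

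When $d\gg t$ there is no stationary point in the support, since $|\Phi'|\gtrsim d$. Repeated integration by parts then gives $O(d^{-N})$, which is bounded by $t^{-N}$. Derivatives of $a$ in $\lambda$ cost only bounded factors, because $a\in C^\infty([-2,2]\times M_0^2)$. Derivatives of $\rho^{\II\lambda}$ are absorbed into the phase.

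The main obstacle is making the stationary-phase bound uniform in $(z,z')$ while keeping both the $t^{-3/2}$ gain from the vanishing amplitude and the polynomial loss in $d$. The gain requires the amplitude to vanish to first order at the stationary point in an averaged sense. The plan is to write $\lambda\,b(\lambda)=\lambda_0 b(\lambda)+(\lambda-\lambda_0)b(\lambda)$. The first term contributes $|\lambda_0|t^{-1/2}\lesssim d\,t^{-3/2}$. For the second term, one integration by parts against $\Phi'\approx t(\lambda-\lambda_0)$ gives $t^{-1}$ times an integral to which van der Corput applies, contributing another $t^{-1/2}$. Any polynomial loss in $d$ is harmless by the Kunze–Stein computation above.
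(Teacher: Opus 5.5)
Your proposal is correct. Its outer structure matches the paper's: $TT^*$, a kernel bound $(1+|t|)^{-3/2}$ times an exponentially decaying function of $d(z,z')$, \cref{prop:kunze-stein-convolution-inequality} with $\tr=r$, integrability of $(1+|t|)^{-3/2}$ in time, and the trivial bound from \eqref{eq:spectral-measure-low-frequency-estimate} for $|t|\lesssim1$. The genuine difference is how the $|t|^{-3/2}$ decay is obtained for large $|t|$.

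The paper never locates a critical point. It splits off $\lambda\lesssim|t|^{-1/2}$ with the cutoff $\chi(|t|^{1/2}\lambda)$ and bounds that piece trivially using the $\lambda^2$ factor in \eqref{eq:spectral-measure-low-frequency-estimate}. On $\lambda\gtrsim|t|^{-1/2}$ it integrates by parts $M\geq4$ times against the phase $t\sqrt{\lambda^2+\kappa^2}$ alone, whose derivative is $\approx t\lambda$, keeping $(\rho_L\rho_R)^{\pm\II\lambda}$ in the amplitude. Each derivative landing there costs $|\ln(\rho_L\rho_R)|\lesssim 1+d$. The result is $(1+d)^M e^{-\frac{n-1}{2}d}|t|^{-3/2}$, and this polynomial loss is paid for by giving up $e^{\delta d}$, which Kunze--Stein tolerates since $r>2$.

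You instead put $\lambda\log\rho$ into the phase and fold the two terms into one integral over $[-2,2]$ whose amplitude vanishes at $\lambda=0$. You then use that the mass makes $\Phi''\approx t$ uniformly, so stationary phase at $\lambda_0\approx-\log\rho/t$ gives $|\lambda_0|t^{-1/2}+t^{-3/2}$.

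The paper's route buys uniformity: there is no case analysis relating $\log\rho$ to $t$ and no parameter-dependent stationary-phase lemma, only the same integration-by-parts bookkeeping used in the high-frequency claim. Your route buys a sharper kernel bound, with loss only $(1+d)$, matching \eqref{eq:spectral-measure-low-frequency-estimate}. It also explains the $t^{-3/2}$ decay transparently: the curvature of the relativistic phase combined with first-order vanishing of the amplitude.

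Three small points to tidy.
\begin{enumerate}
\item The folded amplitude $\lambda b(\lambda)$ is not odd; only its vanishing at $0$ matters.
\item Once $\lambda\log\rho$ is in the phase, $b$ should be $\rho^{\frac{n-1}{2}}\chi^2(|\lambda|)(\lambda^2+\kappa^2)^{-\sigma}a(\lambda)$, without $\rho^{\II\lambda}$.
\item Your split ``$d\lesssim t$'' versus ``$d\gg t$'' leaves a gap. When $d\lesssim t$ but $\lambda_0$ lies outside a fixed neighbourhood of $[-2,2]$, the decomposition $\lambda_0b+(\lambda-\lambda_0)b$ is useless, because $|\lambda_0|$ can be large and the factorization $\Phi'=t(\lambda-\lambda_0)h$ loses $h\approx1$. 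That regime is nonstationary instead, with $|\Phi'|\gtrsim t$ on the support. So split according to whether $|\log\rho|/t$ is below or above, say, $3/\sqrt{9+\kappa^2}$, rather than by $d/t$. This also means you only need the one-sided bound $|\log\rho|\lesssim1+d$, which is all the paper uses.
\end{enumerate}
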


\subsection{Proof of Proposition~\ref{prop:low-frequency-half-wave-strichartz}}
Following the $TT^*$ method, 
to prove this proposition, 
it suffices to show that for any $(q, r)$ as above and any functions $F, G$, the following estimate holds:  
\begin{Eq}\label{eq:low-frequency-half-wave-bilinear-estimate}
\kl|\iint\kl<[U^{\sigma,low}]^*(\tau)F(\tau),[U^{\sigma,low}]^*(t)G(t)\kr>\d\tau\d t\kr|
\lesssim\|F(\tau)\|_{L_\tau^{q'}L_z^{r'}}\|G(t)\|_{L_t^{q'}L_z^{r'}}.
\end{Eq}

This estimate, in turn, is a consequence of the following lemma.
\begin{lemma}\label{lem:low-frequency-half-wave-kernel-estimate}
For the operator $U^{\sigma,low}(t)[U^{\sigma,low}]^*(\tau)$ and any fixed $0 < \delta \ll 1$, we have the following kernel estimate:  
\begin{Eq}\label{eq:low-frequency-half-wave-kernel-estimate}
I(t-\tau):=&\ker \kl(U^{\sigma,low}(t)[U^{\sigma,low}]^*(\tau)\kr)\\
=&\int_0^\infty e^{\II (t-\tau)\sqrt{\lambda^2+\kappa^2}}\chi^2(\lambda)\sqrt{\lambda^2+\kappa^2}^{-2\sigma}E_{\sqrt{H_0}}'(\lambda)\d\lambda,\\
\kl|I(t)\kr|\lesssim& (1+|t|)^{-\frac{3}{2}}e^{-(\frac{n-1}{2}-\delta)d(z,z')}.
\end{Eq}
\end{lemma}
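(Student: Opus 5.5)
We plan to prove the bound by inserting the low-frequency expansion \eqref{eq:spectral-measure-low-frequency-expansion} into the oscillatory integral and integrating by parts in $\lambda$, using that the phase $\sqrt{\lambda^2+\kappa^2}$ is stationary only at $\lambda=0$. For $|t|\le 1$ nothing needs proving beyond the trivial bound: by \eqref{eq:spectral-measure-low-frequency-estimate} the integrand is bounded by $\lambda^2(1+d)e^{-(n-1)d/2}$ on $[0,2]$. Integrating gives $|I(t)|\lesssim (1+d)e^{-(n-1)d/2}\lesssim e^{-(\frac{n-1}{2}-\delta)d}$. So the work is for $|t|\ge1$.

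For $|t|\ge 1$ we substitute \eqref{eq:spectral-measure-low-frequency-expansion}. Write $\rho=\rho_L\rho_R$ and $m(\lambda)=\chi^2(\lambda)(\lambda^2+\kappa^2)^{-\sigma}$, which is even, smooth and compactly supported. Then
\[
I(t)=\rho^{\frac{n-1}{2}}\int_{-\infty}^{\infty} e^{\II t\sqrt{\lambda^2+\kappa^2}}\,\lambda\, m(\lambda)\,\rho^{\II\lambda}a(\lambda;z,z')\,\d\lambda ,
\]
since the two terms in the expansion combine into a single integral over $\mathbb R$ after the change $\lambda\mapsto-\lambda$. The phase is $\Phi(\lambda)=t\sqrt{\lambda^2+\kappa^2}+\lambda\log\rho$, with $\log\rho\approx -d(z,z')$ up to bounded errors. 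We split into two regimes.

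\emph{Regime $d(z,z')\lesssim \delta |t|$, or more precisely $|\log\rho|\le c|t|$ with $c$ small.} Here $\Phi$ has a unique nondegenerate critical point near $\lambda_0=-\kappa\log\rho/t$, where $\Phi''\approx t/\kappa$. Stationary phase (or van der Corput with the vanishing amplitude $\lambda m a$) gives $|t|^{-1/2}$ times the amplitude at $\lambda_0$, which is $O(|\lambda_0|)$, plus an $O(|t|^{-3/2})$ error. The factor $\lambda$ in the amplitude is exactly what produces the extra $|t|^{-1}$. To obtain the clean $|t|^{-3/2}$ bound uniformly, we will instead write $\lambda e^{\II t\sqrt{\lambda^2+\kappa^2}}=\frac{\sqrt{\lambda^2+\kappa^2}}{\II t}\partial_\lambda e^{\II t\sqrt{\lambda^2+\kappa^2}}$ and integrate by parts once. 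This yields $t^{-1}\int e^{\II t\sqrt{\cdot}}\partial_\lambda[\sqrt{\lambda^2+\kappa^2}\,m\,\rho^{\II\lambda}a]\,\d\lambda$. The derivative hitting $\rho^{\II\lambda}$ costs a factor $|\log\rho|\lesssim 1+d$. The remaining integral has a nondegenerate quadratic phase, so van der Corput gives $|t|^{-1/2}$ times the $L^\infty$ plus total variation norm of the amplitude, which is $\lesssim(1+d)^2$. Combined with the prefactor $\rho^{\frac{n-1}{2}}\lesssim e^{-\frac{n-1}{2}d}$, this gives $|t|^{-3/2}(1+d)^2e^{-\frac{n-1}{2}d}\lesssim |t|^{-3/2}e^{-(\frac{n-1}{2}-\delta)d}$. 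Since van der Corput does not require knowing where the critical point is, the argument in fact works for all $d$.

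\emph{Regime $d\gtrsim |t|$.} Here we do not even need oscillation. The crude bound $e^{-\frac{n-1}{2}d}(1+d)$ is at most $e^{-(\frac{n-1}{2}-\delta)d}e^{-\delta d/2}$, and $e^{-\delta d/2}\lesssim e^{-c\delta|t|}\lesssim |t|^{-3/2}$. This regime serves as a safety net, because the van der Corput argument loses powers of $d$ that are only absorbed through the $\delta$-loss.

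The main obstacle is justifying the regularity of the amplitude. Two points need care. First, $a(\lambda;z,z')$ must be smooth in $\lambda$ with derivatives bounded uniformly in $(z,z')$; this follows from $a\in C^\infty([-2,2]\times M_0^2)$ with $M_0^2$ compact. Second, the boundary term at $\lambda=0$ in the integration by parts must vanish; this holds because we work on all of $\mathbb R$ with compactly supported $m$, and the odd-even symmetry is handled by the combination above. We also need $\rho^{\frac{n-1}{2}}\lesssim e^{-\frac{n-1}{2}d}$ and $|\log\rho|\lesssim 1+d$, which are standard for the boundary defining functions on $M_0^2$ (cf. \cite{MR3473907}). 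The polynomial losses in $d$ are absorbed by $\delta$.
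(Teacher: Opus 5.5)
Your proof is correct, and it takes a genuinely different and shorter route than the paper's.

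\textbf{The paper's route.} For $|t|\ge 2$ the paper inserts the $t$-dependent cutoff $\chi(|t|^{1/2}\lambda)$. The piece $\lambda\lesssim|t|^{-1/2}$ is bounded by size, using $|E_{\sqrt{H_0}}'(\lambda)|\lesssim\lambda^2(1+d)e^{-(n-1)d/2}$. The piece $\lambda\gtrsim|t|^{-1/2}$ is handled by $M\ge 4$ integrations by parts with the operator $\frac{\sqrt{\lambda^2+\kappa^2}}{\II t\lambda}\partial_\lambda$. That operator is singular at $\lambda=0$, which is why the cutoff is needed, and an inductive claim tracks the resulting terms and the losses $(1+|\ln(\rho_L\rho_R)|)^M$.

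\textbf{Your route.} You use the odd structure of the expansion to write $I(t)$ as one integral over $[-2,2]$ with smooth amplitude $\lambda m(\lambda)\rho^{\II\lambda}a(\lambda)$. You then integrate by parts once; the factor $\lambda$ exactly cancels the denominator, so nothing singular appears and there are no boundary terms. Finally you apply van der Corput with $\partial_\lambda^2\bigl(t\sqrt{\lambda^2+\kappa^2}\bigr)=t\kappa^2(\lambda^2+\kappa^2)^{-3/2}\gtrsim_\kappa|t|$ on the support.

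\textbf{What each buys.} Your argument needs only two $\lambda$-derivatives of $a$ and loses only $(1+d)^2$. It is uniform in $d$, so your stationary-phase heuristic and your second regime $d\gtrsim|t|$ are not needed. It also makes the mechanism transparent: $|t|^{-1/2}$ comes from the nondegenerate stationary point at $\lambda=0$ created by the mass, and $|t|^{-1}$ comes from the vanishing of the amplitude there. The constant visibly degenerates as $\kappa\to0$. The paper's argument avoids van der Corput and stays on $[0,2]$, matching its high-frequency integration-by-parts scheme, at the cost of the bookkeeping in the claim.

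\textbf{Common inputs.} Both arguments rely on the smoothness of $a$ on $[-2,2]\times M_0^2$, including negative $\lambda$. In the paper this enters through the vanishing of $\rho^{\II\lambda}a(\lambda)-\rho^{-\II\lambda}a(-\lambda)$ at $\lambda=0$. Both also use the relation $(\rho_L\rho_R)^{\frac{n-1}{2}}(1+|\ln(\rho_L\rho_R)|)^k\lesssim(1+d)^ke^{-\frac{n-1}{2}d}$.
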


Assuming this lemma holds, we first consider the case $r\in(2,\infty)$.  Fixing $\delta\ll1$ and applying H\"older's inequality together with \cref{prop:kunze-stein-convolution-inequality}, we derive
\begin{Eq*}
&\kl<[U^{\sigma,low}]^*(\tau)F(\tau),[U^{\sigma,low}]^*(t)G(t)\kr>\\
\lesssim&\kl\|U^{\sigma,low}(t)[U^{\sigma,low}]^*(\tau)F(\tau)\kr\|_{L_z^r}\|G(t)\|_{L_z^{r'}}\\
\lesssim&(1+|t-\tau|)^{-\frac{3}{2}}\kl(\int_0^\infty (1+R)e^{(n-1)\kl(1-\frac{1}{2}-\frac{r}{4}\kr)R+\frac{r\delta}{2}R}\d R\kr)^{\frac{2}{r}}\|F(\tau)\|_{L_z^{r'}}\|G(t)\|_{L_z^{r'}}\\
\lesssim&(1+|t-\tau|)^{-\frac{3}{2}}\|F(\tau)\|_{L_z^{r'}}\|G(t)\|_{L_z^{r'}}.
\end{Eq*}
When $r=\infty$, the same spatial estimate follows directly from the pointwise kernel bound in \cref{lem:low-frequency-half-wave-kernel-estimate}:
\begin{Eq*}
\kl\|U^{\sigma,low}(t)[U^{\sigma,low}]^*(\tau)F(\tau)\kr\|_{L_z^\infty}
\lesssim(1+|t-\tau|)^{-\frac{3}{2}}\|F(\tau)\|_{L_z^1}.
\end{Eq*}
Furthermore, for any $q\in[2,\infty]$, we obtain
\begin{Eq*}
&\kl|\iint\kl<[U^{\sigma,low}]^*(\tau)F(\tau),[U^{\sigma,low}]^*(t)G(t)\kr>\d\tau\d t\kr|\\
\lesssim&\kl\|(1+|t|)^{-\frac{3}{2}}\kr\|_{L_t^{q/2}}\|F(\tau)\|_{L_\tau^{q'}L_z^{r'}}\|G(t)\|_{L_t^{q'}L_z^{r'}}\\
\lesssim&\|F(\tau)\|_{L_\tau^{q'}L_z^{r'}}\|G(t)\|_{L_t^{q'}L_z^{r'}},
\end{Eq*}
with the evident interpretation of the time norm when $q=\infty$.
This yields precisely \eqref{eq:low-frequency-half-wave-bilinear-estimate}. 
Consequently, to establish \cref{prop:low-frequency-half-wave-strichartz}, it remains only to verify \cref{lem:low-frequency-half-wave-kernel-estimate}.

\subsection{Proof of Lemma~\ref{lem:low-frequency-half-wave-kernel-estimate}}
By \eqref{eq:spectral-measure-low-frequency-estimate}, for any given $\delta$, we immediately derive
\begin{Eq*}
\kl|I(t)\kr|\lesssim&\big(1+d(z,z')\big)e^{-(n-1)d(z,z')/2}\int_0^{2} \lambda^2\sqrt{\lambda^2+\kappa^2}^{-2\sigma}\d\lambda\lesssim e^{-(\frac{n-1}{2}-\delta)d(z,z')}.
\end{Eq*}
This establishes the portion of \eqref{eq:low-frequency-half-wave-kernel-estimate} corresponding to the case $|t|\lesssim 1$.

Next, we consider the case $|t| \gtrsim 1$ and decompose $I(t)$ as $I(t) := I_0(t) + I_\infty(t)$, where
\begin{Eq*}
I_0(t):=&\int_0^\infty e^{\II t\sqrt{\lambda^2+\kappa^2}}\chi(|t|^{1/2}\lambda)\chi^2(\lambda)\sqrt{\lambda^2+\kappa^2}^{-2\sigma}E_{\sqrt{H_0}}'(\lambda)\d\lambda,\\
I_\infty(t):=&I(t)-I_0(t)=\int_0^\infty e^{\II t\sqrt{\lambda^2+\kappa^2}}[1-\chi](|t|^{1/2}\lambda)\chi^2(\lambda)\sqrt{\lambda^2+\kappa^2}^{-2\sigma}E_{\sqrt{H_0}}'(\lambda)\d\lambda.
\end{Eq*}
Here, for $I_0(t)$, applying \eqref{eq:spectral-measure-low-frequency-estimate} with $|t| > 2$ yields
\begin{Eq*}
\kl|I_0(t)\kr|\lesssim&\big(1+d(z,z')\big)e^{-(n-1)d(z,z')/2}\int_0^{2|t|^{-1/2}} \sqrt{\lambda^2+\kappa^2}^{-2\sigma}\lambda^2\d\lambda\lesssim |t|^{-\frac{3}{2}}e^{-(\frac{n-1}{2}-\delta)d(z,z')}.
\end{Eq*}

As for $I_\infty(t)$, a more refined analysis is required. 
We begin by stating the following technical claim:
\begin{claim}\label{claim:low-frequency-oscillatory-integral-expansion}
For any fixed $M\in\rN_0$ and $|t|\geq 2$, the term $I_\infty(t)$ can be rewritten as:
\begin{Eq*}
I_\infty(t)=(\rho_L\rho_R)^{\frac{n-1}{2}}\sum_{\bf{M}}C_{\bf{M}}\int_{|t|^{-1/2}/2}^{2}&|t|^{\frac{M_3}{2}-M}\Big(e^{\II t\sqrt{\lambda^2+\kappa^2}}\lambda^{1-M-M_1+M_2}\sqrt{\lambda^2+\kappa^2}^{-2M_2+M-2\sigma}\\
&\cdot[1-\chi]^{(M_3)}(|t|^{1/2}\lambda)[\chi^2]^{(M_4)}(\lambda)\\
&\cdot\partial_\lambda^{M_5}\kl((\rho_L\rho_R)^{\II \lambda}a(\lambda;z,z')-(\rho_L\rho_R)^{-\II \lambda}a(-\lambda;z,z')\kr)\Big)\d \lambda,
\end{Eq*}
where ${\bf M}=(M_1,M_2,M_3,M_4,M_5)$, whose components are all non-negative and satisfy $M=\sum_{i=1}^5 M_i$.
\end{claim}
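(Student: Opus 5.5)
The claim is an $M$-fold integration by parts in $\lambda$, using the phase $t\sqrt{\lambda^2+\kappa^2}$. Since
\begin{Eq*}
\partial_\lambda e^{\II t\sqrt{\lambda^2+\kappa^2}}=\II t\,\lambda\,\sqrt{\lambda^2+\kappa^2}^{-1}e^{\II t\sqrt{\lambda^2+\kappa^2}},
\end{Eq*}
we introduce the operator $L:=\frac{\sqrt{\lambda^2+\kappa^2}}{\II t\lambda}\partial_\lambda$. It satisfies $L e^{\II t\sqrt{\lambda^2+\kappa^2}}=e^{\II t\sqrt{\lambda^2+\kappa^2}}$, and its transpose is
\begin{Eq*}
L^{T}=-\partial_\lambda\circ\frac{\sqrt{\lambda^2+\kappa^2}}{\II t\lambda}.
\end{Eq*}
First we insert \eqref{eq:spectral-measure-low-frequency-expansion} into $I_\infty(t)$ and factor out $(\rho_L\rho_R)^{\frac{n-1}{2}}$, which does not depend on $\lambda$. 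The integrand is then
\begin{Eq*}
e^{\II t\sqrt{\lambda^2+\kappa^2}}\,\lambda\, G(\lambda),\qquad G:=[1-\chi](|t|^{1/2}\lambda)\chi^2(\lambda)\sqrt{\lambda^2+\kappa^2}^{-2\sigma}A(\lambda),
\end{Eq*}
where $A(\lambda)=(\rho_L\rho_R)^{\II\lambda}a(\lambda)-(\rho_L\rho_R)^{-\II\lambda}a(-\lambda)$. On the support we have $\lambda\in[|t|^{-1/2}/2,2]$. The cutoff $[1-\chi](|t|^{1/2}\lambda)$ vanishes near the lower endpoint and $\chi^2$ vanishes near $\lambda=2$, so every integration by parts produces no boundary terms. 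We may therefore write $I_\infty=(\rho_L\rho_R)^{\frac{n-1}{2}}\int e^{\II t\sqrt{\lambda^2+\kappa^2}}(L^T)^M(\lambda G)\,\d\lambda$.

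The main step is an induction on $M$ describing the structure of $(L^T)^M(\lambda G)$. The inductive hypothesis is that it is a finite linear combination of terms
\begin{Eq*}
t^{-M}\lambda^{1-M-M_1+M_2}\sqrt{\lambda^2+\kappa^2}^{-2M_2+M-2\sigma}\,\partial_\lambda^{M_3}\big([1-\chi](|t|^{1/2}\lambda)\big)\,[\chi^2]^{(M_4)}\,\partial_\lambda^{M_5}A,
\end{Eq*}
with $\sum M_i=M$. To pass from $M$ to $M+1$, we multiply by $\sqrt{\lambda^2+\kappa^2}/(\II t\lambda)$; this raises the power of $t^{-1}$ and the $\sqrt{\cdot}$ exponent by one and lowers the $\lambda$ exponent by one. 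We then apply $\partial_\lambda$ by the Leibniz rule, which gives four cases:
\begin{itemize}
\item If it hits the power of $\lambda$, it lowers the $\lambda$ exponent by one, i.e.\ raises $M_1$.
\item If it hits $\sqrt{\lambda^2+\kappa^2}^{\,s}$, it produces $s\lambda\sqrt{\lambda^2+\kappa^2}^{\,s-2}$, i.e.\ raises $M_2$. This matches $\lambda^{+1}$ together with $\sqrt{\cdot}^{-2}$.
\item If it hits $[1-\chi](|t|^{1/2}\lambda)$, it raises $M_3$.
\item If it hits $\chi^2$ or $A$, it raises $M_4$ or $M_5$ respectively.
\end{itemize}
In each case the exponents move exactly as in the claimed formula, and the coefficients $C_{\bf M}$ (which depend on $\sigma$ and on signs of $t$) stay bounded.

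Finally, we rewrite $\partial_\lambda^{M_3}\big([1-\chi](|t|^{1/2}\lambda)\big)=|t|^{M_3/2}[1-\chi]^{(M_3)}(|t|^{1/2}\lambda)$. Combining this with $t^{-M}$, and absorbing the sign of $t$ into $C_{\bf M}$, gives the factor $|t|^{\frac{M_3}{2}-M}$. Note that $A$ is smooth in $\lambda$ on $[-2,2]$ because $a\in C^\infty$, and the derivatives landing on $A$ are kept unexpanded as $\partial_\lambda^{M_5}(\cdots)$, exactly as in the statement.

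The only delicate point is the bookkeeping, namely checking that the exponent shifts from the $\sqrt{\lambda^2+\kappa^2}$ derivative combine into the stated form $\lambda^{1-M-M_1+M_2}\sqrt{\cdot}^{-2M_2+M-2\sigma}$. No analytic obstacle arises, since $\kappa>0$ keeps $\sqrt{\lambda^2+\kappa^2}$ bounded below.
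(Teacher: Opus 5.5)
Your proposal is correct and follows the paper's proof essentially step for step. Both arguments induct on $M$ using $e^{\II t\sqrt{\lambda^2+\kappa^2}}=\frac{\sqrt{\lambda^2+\kappa^2}}{\II t\lambda}\partial_\lambda e^{\II t\sqrt{\lambda^2+\kappa^2}}$, integrate by parts with no boundary terms, and do the same five-way Leibniz bookkeeping for $M_1,\dots,M_5$. Writing it via the transpose $L^T$ and recovering $|t|^{M_3/2}$ only at the end is cosmetic, and you explicitly absorb the sign of $t$ into $C_{\bf M}$, which the paper's write-up (with $-\II|t|^{\frac{M_3}{2}-(M+1)}$) glosses over.
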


Using this claim and the smoothness property $a\in C^\infty([-2,2]\times M_0^2)$, 
 we derive the key estimate
\begin{Eq*}
\kl|\partial_\lambda^{M_5}\kl((\rho_L\rho_R)^{\II \lambda}a(\lambda;z,z')-(\rho_L\rho_R)^{-\II \lambda}a(-\lambda;z,z')\kr)\kr|\lesssim 
\lambda^{\max\{1-M_5,0\}}\cdot (1+|\ln(\rho_L\rho_R)|)^{\max\{M_5,1\}}.
\end{Eq*}
The factor inside the derivative vanishes at $\lambda=0$; this gives one power of $\lambda$ when $M_5=0$, while higher $\lambda$-derivatives only produce logarithmic losses from the boundary factors.
We shall also use the standard relation between the boundary defining functions on the double space and the physical distance,
\begin{Eq*}
(\rho_L\rho_R)^{\frac{n-1}{2}}\big(1+|\ln(\rho_L\rho_R)|\big)^M
\lesssim (1+d(z,z'))^M e^{-\frac{n-1}{2}d(z,z')}.
\end{Eq*}
Taking $M \geq 4$ and applying \cite[Proposition 3.4]{MR3805767}, we obtain
\begin{Eq*}
|I_\infty(t)|\lesssim &(\rho_L\rho_R)^{\frac{n-1}{2}}\sum_{\bf{M}}(1+|\ln(\rho_L\rho_R)|)^{\max\{M_5,1\}}|t|^{\frac{M_3}{2}-M}\int_{|t|^{-1/2}/2}^{2}\lambda^{1-M-M_1+M_2+\max\{1-M_5,0\}} \d \lambda\\
\lesssim &(1+|\ln(\rho_L\rho_R)|)^{M}(\rho_L\rho_R)^{\frac{n-1}{2}}\sum_{\bf{M}}|t|^{\frac{M_3}{2}-M}\int_{|t|^{-1/2}/2}^{2}\lambda^{2-M-M_1-M_5} \d \lambda\\
\lesssim &(1+d(z,z'))^{M}e^{-\frac{n-1}{2}d(z,z')}\sum_{\bf{M}}|t|^{\frac{M_1+M_3+M_5-M-3}{2}}\\
\lesssim& e^{-(\frac{n-1}{2}-\delta)d(z,z')}|t|^{-\frac{3}{2}}.
\end{Eq*}
Combining the estimates above completes the proof of \cref{lem:low-frequency-half-wave-kernel-estimate}, modulo the verification of \cref{claim:low-frequency-oscillatory-integral-expansion}.

\subsection{Proof of Claim~\ref{claim:low-frequency-oscillatory-integral-expansion}}
We prove \cref{claim:low-frequency-oscillatory-integral-expansion} by induction on $M$.
The case $M=0$ is immediate.
Assume that the claim holds for some $M\geq0$.
We use the identity
\begin{Eq*}
e^{\II t\sqrt{\lambda^2+\kappa^2}}=\kl[\frac{\sqrt{\lambda^2+\kappa^2}}{\II t\lambda}\partial_\lambda\kr] \kl(e^{\II t\sqrt{\lambda^2+\kappa^2}}\kr),
\end{Eq*}
All boundary terms vanish in the integrations by parts below, because the cutoffs make the integrand compactly supported in $\lambda$ away from the endpoints of the integration interval.
Therefore, we have
\begin{Eq*}
&\int_{|t|^{-1/2}/2}^{2}|t|^{\frac{M_3}{2}-M}e^{\II t\sqrt{\lambda^2+\kappa^2}}\lambda^{1-M-M_1+M_2}\sqrt{\lambda^2+\kappa^2}^{-2M_2+M-2\sigma}\\
&\phantom{\int_{|t|^{-1/2}/2}^{2}}\quad\cdot[1-\chi]^{(M_3)}(|t|^{1/2}\lambda)[\chi^2]^{(M_4)}(\lambda)\partial_\lambda^{M_5}\kl((\rho_L\rho_R)^{\II \lambda}a(\lambda;z,z')-(\rho_L\rho_R)^{-\II \lambda}a(-\lambda;z,z')\kr)\d \lambda\\
=&-\II\int_{|t|^{-1/2}/2}^{2}|t|^{\frac{M_3}{2}-(M+1)}\partial_\lambda\kl(e^{\II t\sqrt{\lambda^2+\kappa^2}}\kr)\lambda^{1-(M+1)-M_1+M_2}\sqrt{\lambda^2+\kappa^2}^{-2M_2+(M+1)-2\sigma}\\
&\phantom{\int_{|t|^{-1/2}/2}^{2}}\quad\cdot[1-\chi]^{(M_3)}(|t|^{1/2}\lambda)[\chi^2]^{(M_4)}(\lambda)\partial_\lambda^{M_5}\kl((\rho_L\rho_R)^{\II \lambda}a(\lambda;z,z')-(\rho_L\rho_R)^{-\II \lambda}a(-\lambda;z,z')\kr)\d \lambda.
\end{Eq*}

Next, we perform integration by parts. 
When $\partial_\lambda$ falls on $\lambda^{a}$, 
we get $a \lambda^{a-1}$, 
which corresponds to $M_1 \to M_1 + 1$;
when $\partial_\lambda$ falls on $\sqrt{\lambda^2+\kappa^2}^{a}$, 
we get $a \lambda \sqrt{\lambda^2+\kappa^2}^{a-2}$, 
which corresponds to $M_2 \to M_2 + 1$;
when $\partial_\lambda$ falls on $[1-\chi]^{(M_3)}(|t|^{1/2}\lambda)$, 
we get $|t|^{\frac{1}{2}} [1-\chi]^{(M_3+1)}(|t|^{1/2}\lambda)$, 
which corresponds to $M_3 \to M_3 + 1$;
finally, when $\partial_\lambda$ falls on the last two terms, 
they correspond to $M_4 \to M_4 + 1$ and $M_5 \to M_5 + 1$ respectively.
This completes the induction.

\section{Strichartz estimates for the localized high-frequency half-wave propagator}\label{sec:high-frequency-half-wave-strichartz}
In this section, we establish that the localized high-frequency component $U_{j,k}^{\sigma,high}(t)$ of the half-wave propagator satisfies the following estimate.
\begin{proposition}\label{prop:high-frequency-half-wave-strichartz}
Let $U_{j,k}^{\sigma,high}(t)$ be as in \cref{sec:preparation}. 
Then, for any $(q, r) \in [2, \infty] \times (2, \infty]$ and any $\sigma \in \rR$, we have  
\begin{Eq*}
\kl\|U_{j,k}^{\sigma,high}(t)f\kr\|_{L_t^qL_z^r}\lesssim (1+j)2^{j(s-\sigma)}\|f\|_{L^2},
\end{Eq*}
where $s = s_e$, as defined in \eqref{eq:euclidean-admissible-triples}, when $2/q \geq (n-1)(1/2 - 1/r)$; and $s = s_w$, as defined in \eqref{eq:wave-admissible-triples}, when $2/q \leq (n-1)(1/2 - 1/r)$.
\end{proposition}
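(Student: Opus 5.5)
We argue by the $TT^*$ method, as in \cref{sec:low-frequency-half-wave-strichartz}. It suffices to bound the kernel of
\begin{Eq*}
U_{j,k}^{\sigma,high}(t)[U_{j,k}^{\sigma,high}]^*(\tau)=\int_0^\infty e^{\II (t-\tau)\sqrt{\lambda^2+\kappa^2}}\beta_j^2(\lambda)\sqrt{\lambda^2+\kappa^2}^{-2\sigma}\kl(Q_k(\lambda)E_{\sqrt{H_0}}'(\lambda)Q_k^*(\lambda)\kr)\d\lambda
\end{Eq*}
and then run the same H\"older, Kunze-Stein (\cref{prop:kunze-stein-convolution-inequality}) and Hardy-Littlewood-Sobolev argument in time. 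Since $\lambda\approx 2^j$ on the support, the factor $\sqrt{\lambda^2+\kappa^2}^{-2\sigma}$ is a symbol of order $-2\sigma$ and contributes exactly $2^{-2j\sigma}$; this accounts for the $2^{-j\sigma}$ in the statement. Thus we may take $\sigma=0$. We insert the expansion \eqref{eq:microlocalized-spectral-measure-expansion}. The remainder $b$ is harmless by \eqref{eq:remainder-amplitude-estimate}: integrating by parts in $\lambda$ against the phase gives $2^{-jM}(1+|t|)^{-N}e^{-(n-1)d/2}$, which is better than needed.

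The main term is an oscillatory integral with phase $t\sqrt{\lambda^2+\kappa^2}\pm\lambda d(z,z')$. Rescale $\lambda=2^j\mu$. The phase becomes $2^j\bigl(t\sqrt{\mu^2+2^{-2j}\kappa^2}\pm \mu d\bigr)$, whose second $\mu$-derivative is $t\,2^{-2j}\kappa^2(\mu^2+2^{-2j}\kappa^2)^{-3/2}\approx t2^{-2j}$. This is small, so in the dispersive range the phase behaves essentially like that of the shifted wave equation.

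We therefore expect the same kernel bounds as in \cite{MR4169670}. We split into $d(z,z')\le1$ and $d(z,z')\ge1$ and use \eqref{eq:oscillatory-amplitude-estimate}.
\begin{itemize}
\item For $d\le1$, nonstationary phase when $|t|\not\approx d$ and the van der Corput/stationary phase bound when $|t|\approx d$ give $2^{jn}(1+2^j|t|)^{-\frac{n-1}{2}}$.
\item For $d\ge 1$, the extra factor $2^{-j\frac{n-1}{2}}e^{-(n-1)d/2}$ gives $2^{j\frac{n+1}{2}}|t|^{-\frac{n-1}{2}}e^{-(n-1)d/2}$ near $|t|\approx d$, with rapid decay in $2^j||t|-d|$ otherwise.
\item For $|t|\ge 2^{j}$ and large times we also obtain the bound $2^{jn}(2^j|t|)^{-N}$ away from the light cone.
\end{itemize}
Derivatives of the amplitude and of $\sqrt{\mu^2+2^{-2j}\kappa^2}$ are uniformly bounded in $j$, so each integration by parts gains $(2^j||t|\mp d|)^{-1}$ uniformly.

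The main obstacle is the large-distance regime $|t|\approx d\gg1$, where the kernel has only $e^{-(n-1)d/2}$ decay. Feeding it into \cref{prop:kunze-stein-convolution-inequality} creates the growth $e^{(n-1)(1-\alpha/2)R}$ that must be balanced against $e^{-\theta(n-1)R/2}$. This borderline balance is exactly what produces the logarithmic loss $(1+j)$; we would handle it as in \cite{MR4169670}, truncating the radial integral at $R\lesssim j$ and using the time localization $|t|\approx d$.

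Finally, we split into cases. For $2/q\ge(n-1)(1/2-1/r)$ we interpolate the local Euclidean-type dispersive estimate, of size $2^{j\frac{n+1}{2}}|t|^{-\frac{n-1}{2}}$ for $|t|\lesssim1$, with the exponentially decaying large-time bound. Keel-Tao-type summation in $t$ then yields $2^{2js_e}$ in the $TT^*$ form. For $2/q\le(n-1)(1/2-1/r)$ we instead use the $L^2$ bound together with the trivial Sobolev bound $2^{jn(1/2-1/r)}$ on unit time intervals, and sum the intervals using the $|t|^{-(n-1)/2}$ decay. This gives $2^{2js_w}$. Taking square roots yields $(1+j)2^{j(s-\sigma)}$.
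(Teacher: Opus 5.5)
Your kernel analysis is roughly adequate, but the passage from the kernel bound to the Strichartz bound has two genuine problems.

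\textbf{The wave-region step does not give $s_w$.} Energy conservation plus the Bernstein/Sobolev bound on unit time intervals gives at best $\|U_{j,k}^{0,high}f\|_{L^q_t([k,k+1])L^r_z}\lesssim 2^{jn(\frac12-\frac1r)}\|f\|_{L^2}$. This exceeds $2^{js_w}=2^{j(n(\frac12-\frac1r)-\frac1q)}$ by a factor $2^{j/q}$ for every $q<\infty$, and summing over intervals cannot recover that loss. The gain $-\frac1q$ must come from the short-time dispersive decay on the scales $2^{-j}\lesssim|t|\lesssim1$, in the wave region just as in the Euclidean one. The paper treats both regions identically. It interpolates the pointwise bound of \cref{lem:high-frequency-half-wave-kernel-estimate} with the $L^2\to L^2$ bound $2^{-2j\sigma}$. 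That $L^2$ bound is itself a needed input, since $Q_k$ depends on $\lambda$; it is taken from \cite[Proposition~3.1]{MR4169670}. The interpolation gives
\[
\bigl\|U_{j,k}^{\sigma,high}(t)[U_{j,k}^{\sigma,high}]^*(\tau)\bigr\|_{L^{r'}\to L^{r}}\lesssim 2^{j((n+1)(\frac12-\frac1r)-2\sigma)}\bigl(2^{-j}+|t-\tau|\bigr)^{-(n-1)(\frac12-\frac1r)}
\]
for $|t-\tau|\lesssim1$, with $|t-\tau|^{-M(1-\frac2r)}$ decay beyond. Young's inequality in time then needs the $L^{q/2}(|t|\lesssim1)$ norm of $(2^{-j}+|t|)^{-(n-1)(\frac12-\frac1r)}$. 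This norm is:
\begin{itemize}
\item $\approx 2^{j((n-1)(\frac12-\frac1r)-\frac2q)}$ strictly inside the wave region, which yields exactly $2^{2j(s_w-\sigma)}$;
\item $O(1)$ strictly inside the Euclidean region, which yields $2^{2j(s_e-\sigma)}$;
\item $\lesssim 1+j$ on the line $\frac2q=(n-1)(\frac12-\frac1r)$.
\end{itemize}

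\textbf{The factor $(1+j)$ comes from that last case, not from the obstacle you single out, which does not exist.} For $d\gtrsim1$ and $|t|\approx d$, \eqref{eq:oscillatory-amplitude-estimate} only gives $|I(t)|\lesssim 2^{j(\frac{n+1}{2}-2\sigma)}e^{-\frac{n-1}{2}d}$. The extra factor $|t|^{-\frac{n-1}{2}}$ you claim is not available. But since $d\approx|t|$, this bound is $\lesssim_M 2^{j(\frac{n+1}{2}-2\sigma)}|t|^{-M}e^{-(\frac{n-1}{2}-\delta)d}$, so this regime is exactly what supplies the rapid large-time decay, uniformly in $z,z'$. Consequently only the $L^1\to L^\infty$ and $L^2\to L^2$ bounds are needed, and \cref{prop:kunze-stein-convolution-inequality} plays no role in the high-frequency part. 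Even if you used it, for $r,\tilde r>2$ the radial integral converges with room to spare, so there is no borderline balance and truncating at $R\lesssim j$ addresses nothing.

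\textbf{Keel--Tao also leaves a hole on the critical line.} For $n=3$ the local decay rate is $|t|^{-1}$, and $(q,r)=(2,\infty)$ lies on the critical line and is included in the statement. This is the Keel--Tao forbidden endpoint, so your argument does not cover it. The direct Young estimate, with its $(1+j)$ loss, does.
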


\subsection{Proof of Proposition~\ref{prop:high-frequency-half-wave-strichartz}}
Following the $TT^*$ method, to prove this proposition, it suffices to show that for any $(q, r)$ as above and any functions $F, G$, the following estimate holds:
\begin{Eq}\label{eq:high-frequency-half-wave-bilinear-estimate}
&\kl|\iint\kl<[U_{j,k}^{\sigma,high}]^*(\tau)F(\tau),[U_{j,k}^{\sigma,high}]^*(t)G(t)\kr>\d\tau\d t\kr|\\
\lesssim&(1+j)^22^{2j(s-\sigma)}\|F(\tau)\|_{L_\tau^{q'}L_z^{r'}}\|G(t)\|_{L_t^{q'}L_z^{r'}},
\end{Eq}
where $s$ is defined as above.

This estimate, in turn, is a consequence of the following lemma.
\begin{lemma}\label{lem:high-frequency-half-wave-kernel-estimate}
For the operator $U_{j,k}^{\sigma,high}(t)[U_{j,k}^{\sigma,high}]^*(\tau)$, any fixed $0 < \delta \ll 1$ and $M>0$ sufficiently large, we have the following kernel estimate:  
\begin{Eq}\label{eq:high-frequency-half-wave-kernel-estimate}
I(t-\tau):=&\ker\kl(U_{j,k}^{\sigma,high}(t)[U_{j,k}^{\sigma,high}]^*(\tau)\kr)\\
=&\int_0^\infty e^{\II (t-\tau)\sqrt{\lambda^2+\kappa^2}}\beta_j^2(\lambda)\sqrt{\lambda^2+\kappa^2}^{-2\sigma}[Q_kE_{\sqrt{H_0}}'Q_k^*](\lambda)\d\lambda,\\
\kl|I(t)\kr|\lesssim&2^{j(\frac{n+1}{2}-2\sigma)}e^{-(\frac{n-1}{2}-\delta)d(z,z')}\begin{cases}(2^{-j}+|t|)^{-\frac{n-1}{2}},&|t|\lesssim 1,\\
|t|^{-M},&|t|\gtrsim 1.
\end{cases}
\end{Eq}
\end{lemma}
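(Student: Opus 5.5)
We insert the microlocalized expansion \eqref{eq:microlocalized-spectral-measure-expansion} into the kernel $I(t)$ and treat the main oscillatory terms and the remainder $b$ separately. After rescaling $\lambda=2^j\mu$, with $\mu\in[1/2,2]$ on the support of $\beta_j^2$, the main terms become
\begin{Eq*}
2^{j(n-2\sigma)}\sum_\pm\int e^{\II 2^j(t\sqrt{\mu^2+2^{-2j}\kappa^2}\pm\mu d)}\,\beta^2(\mu)\,\sqrt{\mu^2+2^{-2j}\kappa^2}^{-2\sigma}\,a_\pm(2^j\mu;z,z')\,\d\mu,
\end{Eq*}
where $d=d(z,z')$. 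By \eqref{eq:oscillatory-amplitude-estimate}, each $\mu$-derivative of the amplitude costs only $O(1)$, uniformly in $j$. The remainder term is handled directly: by \eqref{eq:remainder-amplitude-estimate}, integrating by parts in $\lambda$ using $\partial_\lambda$ of the phase $t\sqrt{\lambda^2+\kappa^2}$ gives $2^{-jM}(1+|t|)^{-M}e^{-(n-1)d/2}$. This is far better than required.

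\emph{Case $|t|\lesssim1$.} For $d\le1$, we use the van der Corput/stationary-phase bound together with $|a_\pm|\lesssim(1+2^jd)^{-\frac{n-1}{2}}$. The phase derivative is $2^j(t\mu/\sqrt{\mu^2+2^{-2j}\kappa^2}\pm d)$, and the second derivative in $\mu$ is of size $2^{j}|t|\,2^{-2j}$. This curvature is too weak to exploit, so we argue as in the Euclidean wave case. If $|t|\not\approx d$, nonstationary integration by parts gives $(1+2^j||t|-d|)^{-N}$. Near $|t|\approx d$, we bound the integral trivially and use the amplitude decay $(1+2^jd)^{-\frac{n-1}{2}}\approx(1+2^j|t|)^{-\frac{n-1}{2}}$. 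Together these give $2^{jn}(1+2^j|t|)^{-\frac{n-1}{2}}=2^{j\frac{n+1}{2}}(2^{-j}+|t|)^{-\frac{n-1}{2}}$. For $d\ge1$, the amplitude gives $2^{-j\frac{n-1}{2}}e^{-(n-1)d/2}$, and the trivial bound already yields $2^{j\frac{n+1}{2}}e^{-(n-1)d/2}$, which is consistent with the claim since $|t|\lesssim1$.

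\emph{Case $|t|\gtrsim1$.} This is the main obstacle, because the phase may be stationary when $|t|\approx d$: the group velocity $\mu/\sqrt{\mu^2+2^{-2j}\kappa^2}$ is close to $1$. If $d\le |t|/2$ or $d\ge 2|t|$, the phase derivative is $\gtrsim 2^j\max(|t|,d)$. In that range, $M$-fold integration by parts (as in \cref{claim:low-frequency-oscillatory-integral-expansion}) gives $2^{-jM}|t|^{-M}$ times the amplitude bounds, which suffices. In the regime $d\approx|t|\gtrsim1$, the decay must instead come from the exponential factor. We spend the loss $\delta$: by \eqref{eq:oscillatory-amplitude-estimate}, for $d\ge1$ the trivial bound is
\begin{Eq*}
2^{j\frac{n+1}{2}}e^{-\frac{n-1}{2}d}\le 2^{j\frac{n+1}{2}}e^{-(\frac{n-1}{2}-\delta)d}e^{-\delta d}\lesssim 2^{j\frac{n+1}{2}}e^{-(\frac{n-1}{2}-\delta)d}|t|^{-M}.
\end{Eq*}
Hence no stationary phase analysis is needed there.

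The only care required is to check that each $\partial_\mu$ falling on $\sqrt{\mu^2+2^{-2j}\kappa^2}^{-2\sigma}$, on $\beta^2$, or on $a_\pm(2^j\mu)$ produces only $O(1)$ factors, uniformly in $j$. The same check is needed for the inverse of the phase derivative, which is bounded below by $2^j\max(|t|,d)$ away from $d\approx|t|$. Collecting the cases gives \eqref{eq:high-frequency-half-wave-kernel-estimate}.
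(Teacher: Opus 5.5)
Your proposal is correct and follows essentially the paper's argument: the same split into the oscillatory terms with amplitudes $a_\pm$ and the remainder $b$, repeated integration by parts away from $|t|\approx d$, the trivial bound combined with the amplitude decay $(1+2^jd)^{-\frac{n-1}{2}}$ when $d\lesssim 1$, and the factor $e^{-\delta d}$ to produce $|t|^{-M}$ when $d\approx|t|\gtrsim 1$. Your trivial bound for $d\ge 1$, $|t|\lesssim 1$ is a harmless shortcut compared with the paper's integration by parts in that regime. The one correction is that on the low dyadic shells (small $j$, large $\kappa$) the group velocity $\lambda/\sqrt{\lambda^2+\kappa^2}$ is only bounded below by $(1+\kappa^2)^{-1/2}$ and need not be close to $1$, so your non-stationary threshold $d\le |t|/2$ must be replaced by $d\le c_\kappa|t|$ with a $\kappa$-dependent constant $c_\kappa$; the enlarged intermediate regime $c_\kappa|t|\le d\le 2|t|$ is then handled by the same $e^{-\delta d}$ argument.
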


Assuming the above lemma holds, 
we immediately obtain the $L^1 \to L^\infty$ bound for the operator $U_{j,k}^{\sigma,high}(t)[U_{j,k}^{\sigma,high}]^*(\tau)$.
On the other hand, the argument of \cite[Proposition~3.1]{MR4169670} applies without change: on the high-frequency support, $\sqrt{\lambda^2+\kappa^2}^{-\sigma}$ is a symbol of order $-\sigma$.  Hence the $L^2\rightarrow L^2$ bound of this operator is controlled by $2^{-2\sigma j}$.
Interpolating between these two bounds, we obtain for any $r \in [2, \infty]$ and sufficiently large $M$,
\begin{Eq*}
\kl\|U_{j,k}^{\sigma,high}(t)[U_{j,k}^{\sigma,high}]^*(\tau)\kr\|_{L_z^{r'}\rightarrow L_z^r}
\lesssim&2^{j\kl((n+1)\kl(\frac{1}{2}-\frac{1}{r}\kr)-2\sigma\kr)}\begin{cases}(2^{-j}+|t-\tau|)^{-(n-1)\kl(\frac{1}{2}-\frac{1}{r}\kr)},&|t-\tau|\lesssim 1,\\
|t-\tau|^{2M/r-M},&|t-\tau|\gtrsim 1.
\end{cases}
\end{Eq*}
Now, for any $r\in(2,\infty]$, we fix $\delta\ll 1$ and $M$ large enough.
Then we obtain
\begin{Eq*}
&\kl|\iint\kl<[U_{j,k}^{\sigma,high}]^*(\tau)F(\tau),[U_{j,k}^{\sigma,high}]^*(t)G(t)\kr>\d\tau\d t\kr|\\
\lesssim&2^{j\kl((n+1)\kl(\frac{1}{2}-\frac{1}{r}\kr)-2\sigma\kr)}
\|F(\tau)\|_{L_\tau^{q'}L_z^{r'}}\|G(t)\|_{L_t^{q'}L_z^{r'}}\\
&\quad\cdot\kl(\kl\|(2^{-j}+|t|)^{-(n-1)\kl(\frac{1}{2}-\frac{1}{r}\kr)}\kr\|_{L_t^{q/2}(|t|\lesssim1)}
+\kl\||t|^{2M/r-M}\kr\|_{L_t^{q/2}(|t|\gtrsim1)}\kr)\\
\lesssim&2^{j\kl((n+1)\kl(\frac{1}{2}-\frac{1}{r}\kr)-2\sigma\kr)}\|F(\tau)\|_{L_\tau^{q'}L_z^{r'}}\|G(t)\|_{L_t^{q'}L_z^{r'}}\begin{cases}
2^{j\kl((n-1)\kl(\frac{1}{2}-\frac{1}{r}\kr)-\frac{2}{q}\kr)},&(n-1)\kl(\frac{1}{2}-\frac{1}{r}\kr)>\frac{2}{q},\\
j+1,&(n-1)\kl(\frac{1}{2}-\frac{1}{r}\kr)=\frac{2}{q},\\
1,&(n-1)\kl(\frac{1}{2}-\frac{1}{r}\kr)<\frac{2}{q}.
\end{cases}\\
\lesssim&\|F(\tau)\|_{L_\tau^{q'}L_z^{r'}}\|G(t)\|_{L_t^{q'}L_z^{r'}}(1+j)^2\begin{cases}
2^{2j\kl(n\kl(\frac{1}{2}-\frac{1}{r}\kr)-\frac{1}{q}-\sigma\kr)},&(n-1)\kl(\frac{1}{2}-\frac{1}{r}\kr)\geq\frac{2}{q},\\
2^{2j\kl(\frac{n+1}{2}\kl(\frac{1}{2}-\frac{1}{r}\kr)-\sigma\kr)},&(n-1)\kl(\frac{1}{2}-\frac{1}{r}\kr)\leq\frac{2}{q}.
\end{cases}
\end{Eq*}
which yields precisely \eqref{eq:high-frequency-half-wave-bilinear-estimate}.
Consequently, to establish \cref{prop:high-frequency-half-wave-strichartz}, it remains only to verify \cref{lem:high-frequency-half-wave-kernel-estimate}.

\subsection{Proof of Lemma~\ref{lem:high-frequency-half-wave-kernel-estimate}}
Using \eqref{eq:microlocalized-spectral-measure-expansion}, and writing $d(z,z')$ simply as $d$, we can further decompose the $I(t)$ defined above into
\begin{Eq*}
I(t)=&\sum_{\pm}\int_{2^{j-1}}^{2^{j+1}}e^{\II \kl(t\sqrt{\lambda^2+\kappa^2}\pm \lambda d\kr)}\lambda^{n-1}\sqrt{\lambda^2+\kappa^2}^{-2\sigma}\beta_j^2(\lambda)a_{\pm}(\lambda)\d\lambda\\
&\quad+\int_{2^{j-1}}^{2^{j+1}}e^{\II t\sqrt{\lambda^2+\kappa^2}}\sqrt{\lambda^2+\kappa^2}^{-2\sigma}\beta_j^2(\lambda)b(\lambda)\d\lambda\\
\equiv&I_{a_{\pm}}+I_b.
\end{Eq*}
For the terms $I_{a_{\pm}}$ and $I_b$, we present the following claims, 
analogous to the analysis in the previous section:
\begin{claim}\label{claim:high-frequency-oscillatory-integral-expansion}
For any fixed $M\in\rN_0$, the term $I_{a_{\pm}}(t)$ can be rewritten as:
\begin{Eq*}
I_{a_{\pm}}=&\sum_{\bf M}C_{\bf M}\int_{2^{j-1}}^{2^{j+1}}e^{\II \kl(t\sqrt{\lambda^2+\kappa^2}\pm \lambda d\kr)} \lambda^{n-1-M_1+M_2+M_{52}}\sqrt{\lambda^2+\kappa^2}^{M-2\sigma-2M_2-M_{52}}\\
&\phantom{\sum_{\bf M}C_{\bf M}\int_{2^{j-1}}^{2^{j+1}}}\quad\cdot[\beta_j^2]^{(M_3)}(\lambda)a_{\pm}^{(M_4)}(\lambda)\kl(t\lambda\pm \sqrt{\lambda^2+\kappa^2}d\kr)^{-M-M_5}|t|^{M_{51}}d^{M_{52}}\d\lambda,
\end{Eq*}
where ${\bf M}=(M_1,M_2,M_3,M_4,M_5,M_{51},M_{52})$, whose components are all non-negative and satisfy $M=\sum_{i=1}^5 M_i$ and $M_5=M_{51}+M_{52}$.

Similarly, $I_{b}$ can be rewritten as:
\begin{Eq*}
I_{b}=&|t|^{-M}\sum_{\bf M}C_{\bf M}\int_{2^{j-1}}^{2^{j+1}}e^{\II t\sqrt{\lambda^2+\kappa^2}}
 \lambda^{-M-M_1+M_2}\sqrt{\lambda^2+\kappa^2}^{M-2\sigma-2M_2}[\beta_j^2]^{(M_3)}(\lambda)b^{(M_4)}(\lambda)\d\lambda,
\end{Eq*}
where ${\bf M}=(M_1,M_2,M_3,M_4)$, whose components are all non-negative and satisfy $M=\sum_{i=1}^4 M_i$.
\end{claim}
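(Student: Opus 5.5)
The proof is an induction on $M$, following the proof of \cref{claim:low-frequency-oscillatory-integral-expansion}. The case $M=0$ is the definition of $I_{a_\pm}$ and $I_b$; all exponents vanish, and the factor $(t\lambda\pm\sqrt{\lambda^2+\kappa^2}d)^{0}=1$. For the inductive step we need a first-order operator that reproduces the phase. For $I_{a_\pm}$ the phase is $\Phi_\pm(\lambda)=t\sqrt{\lambda^2+\kappa^2}\pm\lambda d$, and
\begin{Eq*}
\partial_\lambda\Phi_\pm=\frac{t\lambda\pm\sqrt{\lambda^2+\kappa^2}\,d}{\sqrt{\lambda^2+\kappa^2}},
\end{Eq*}
so that
\begin{Eq*}
e^{\II\Phi_\pm}=\frac{\sqrt{\lambda^2+\kappa^2}}{\II\kl(t\lambda\pm\sqrt{\lambda^2+\kappa^2}d\kr)}\partial_\lambda e^{\II\Phi_\pm}.
\end{Eq*}
Each application therefore raises the exponent of $\sqrt{\lambda^2+\kappa^2}$ by one and lowers the exponent of $(t\lambda\pm\sqrt{\lambda^2+\kappa^2}d)$ by one. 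This is consistent with the bookkeeping $M\to M+1$ in the terms $\sqrt{\lambda^2+\kappa^2}^{M-\cdots}$ and $(\cdots)^{-M-M_5}$.

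At the formal level the identity requires $t\lambda\pm\sqrt{\lambda^2+\kappa^2}d\neq0$ on the support. For the $+$ sign with $t>0$ (or the $-$ sign with $t<0$) this holds once $(t,d)\neq0$. In the opposite-sign case the expression may vanish, but the claim is an algebraic identity valid wherever the denominator does not vanish. Since $\beta_j^2$ is compactly supported in $(2^{j-1},2^{j+1})$, integration by parts produces no boundary terms.

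After substituting the identity, integrate by parts and let $\partial_\lambda$ fall on each factor in turn.
\begin{itemize}
\item On $\lambda^{a}$ it gives $a\lambda^{a-1}$, which is $M_1\to M_1+1$.
\item On $\sqrt{\lambda^2+\kappa^2}^{a}$ it gives $a\lambda\sqrt{\lambda^2+\kappa^2}^{a-2}$, which is $M_2\to M_2+1$.
\item On $[\beta_j^2]^{(M_3)}$ it gives $M_3\to M_3+1$.
\item On $a_\pm^{(M_4)}$ it gives $M_4\to M_4+1$.
\item On $(t\lambda\pm\sqrt{\lambda^2+\kappa^2}d)^{-M-M_5}$ it produces $(-M-M_5)(\cdots)^{-M-M_5-1}\kl(t\pm\lambda\sqrt{\lambda^2+\kappa^2}^{-1}d\kr)$. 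This splits into two terms. The term with $t$ gives $|t|^{M_{51}+1}$ (up to a sign absorbed in $C_{\bf M}$), i.e. $M_{51}\to M_{51}+1$. The term with $d$ gives $d^{M_{52}+1}\lambda\sqrt{\lambda^2+\kappa^2}^{-1}$, i.e. $M_{52}\to M_{52}+1$, with exactly the extra factors $\lambda^{M_{52}}\sqrt{\lambda^2+\kappa^2}^{-M_{52}}$ recorded in the claim.
\end{itemize}
In each case the new multi-index has total $M+1$ and all exponents match the stated form, which closes the induction.

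For $I_b$ the phase is simply $t\sqrt{\lambda^2+\kappa^2}$. We use the same identity as in the low-frequency section,
\begin{Eq*}
e^{\II t\sqrt{\lambda^2+\kappa^2}}=\frac{\sqrt{\lambda^2+\kappa^2}}{\II t\lambda}\partial_\lambda e^{\II t\sqrt{\lambda^2+\kappa^2}},
\end{Eq*}
which is valid since $\lambda\geq2^{j-1}>0$ on the support. Each step gains $|t|^{-1}\lambda^{-1}\sqrt{\lambda^2+\kappa^2}$. The derivative then falls on $\lambda^{a}$, on $\sqrt{\lambda^2+\kappa^2}^{a}$, on $[\beta_j^2]^{(M_3)}$, or on $b^{(M_4)}$, giving $M_1,\dots,M_4\to{}+1$ exactly as before.

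The only real point of care is the derivative of the non-standard denominator in $I_{a_\pm}$. One must check that splitting it into the $t$-part and the $d$-part produces precisely the powers $\lambda^{M_{52}}\sqrt{\lambda^2+\kappa^2}^{-M_{52}}$. One must also note that the claim is a formal identity, since the non-stationary-phase treatment of $t\lambda\pm\sqrt{\lambda^2+\kappa^2}d\approx0$ is deferred to the subsequent estimates. Everything else is the same bookkeeping as in \cref{claim:low-frequency-oscillatory-integral-expansion}.
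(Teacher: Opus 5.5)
Your proposal is correct and follows the paper's proof essentially step for step. Like the paper, you argue by induction on $M$ using $e^{\II(t\sqrt{\lambda^2+\kappa^2}\pm\lambda d)}=\frac{\sqrt{\lambda^2+\kappa^2}}{\II(t\lambda\pm\sqrt{\lambda^2+\kappa^2}d)}\partial_\lambda e^{\II(t\sqrt{\lambda^2+\kappa^2}\pm\lambda d)}$ (and the $\II t\lambda$ version for $I_b$), with the same factor-by-factor bookkeeping of $M_1,\dots,M_5$, $M_{51}$, $M_{52}$. Your caveat is a precision the paper leaves implicit: each integration by parts needs the denominator to be nonvanishing on all of $\supp\beta_j$, and $I_b$ needs $t\neq0$. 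It is consistent with how the claim is used, since the paper never integrates by parts in the near-stationary regime $|t|\approx d$.
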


Under this claim, noting that $[\beta_j^2]^{(M_3)}(\lambda) \lesssim 2^{-M_3 j}$ and $\sqrt{\lambda^2 + \kappa^2} \approx \lambda \approx 2^j$, we first obtain
\begin{Eq*}
|I_{a_{\pm}}(t)|\lesssim&\sum_{\bf M}2^{j(n-1-M_1-M_2-M_3+M-2\sigma)}|t|^{M_{51}}d^{M_{52}}\int_{2^{j-1}}^{2^{j+1}} \kl|a_{\pm}^{(M_4)}(\lambda)\kr|\kl|t\lambda\pm \sqrt{\lambda^2+\kappa^2}d\kr|^{-M-M_5}\d\lambda\\
\lesssim&\sum_{\bf M}2^{j(n-1+M_4+M_5-2\sigma)}\max\{|t|,d\}^{M_5}\int_{2^{j-1}}^{2^{j+1}} \kl|a_{\pm}^{(M_4)}(\lambda)\kr|\kl|t\lambda\pm \sqrt{\lambda^2+\kappa^2}d\kr|^{-M-M_5}\d\lambda.
\end{Eq*}

To estimate $|I_{a_{\pm}}(t)|$, 
we proceed by considering separate cases based on the sizes of $d:=d(z,z')$ and $|t|$.\\
In the near-stationary regime $|t|\approx d$, we do not integrate by parts in the oscillatory factor; instead we use the dyadic size estimate and absorb polynomial losses into the exponential spatial decay.  Indeed, for every $N>0$ and every small $\delta>0$,
\begin{Eq*}
e^{-\frac{n-1}{2}d}
=e^{-(\frac{n-1}{2}-\delta)d}e^{-\delta d}
\lesssim e^{-(\frac{n-1}{2}-\delta)d}(1+|t|)^{-N}.
\end{Eq*}
\noindent  {\bf Case 1: $d\lesssim 1$.} 
In this case, by applying  \eqref{eq:oscillatory-amplitude-estimate}, we obtain
\begin{Eq*}
|I_{a_{\pm}}(t)|
\lesssim&(1+2^j d)^{-\frac{n-1}{2}}\sum_{\bf M}2^{j(n-1+M_5-2\sigma)}\max\{|t|,d\}^{M_5}\int_{2^{j-1}}^{2^{j+1}} \kl|t\lambda\pm \sqrt{\lambda^2+\kappa^2}d\kr|^{-M-M_5}\d\lambda.
\end{Eq*}

\noindent {\bf Case 1.1: Additionally $|t|\lesssim d$.} 
In this situation, we take $M=0$, which gives
\begin{Eq*}
|I_{a_{\pm}}(t)|
\lesssim&(1+2^j d)^{-\frac{n-1}{2}} 2^{j(n-2\sigma)}\lesssim(1+2^j |t|)^{-\frac{n-1}{2}} 2^{j(n-2\sigma)}\lesssim 2^{j(\frac{n+1}{2}-2\sigma)}(2^{-j}+ |t|)^{-\frac{n-1}{2}}.
\end{Eq*}

\noindent {\bf Case 1.2: Additionally $|t|\gg d$.} 
Here, for any $M \in \rN_0$, we have
\begin{Eq*}
|I_{a_{\pm}}(t)|
\lesssim&\sum_{\bf M}2^{j(n-1+M_5-2\sigma)}|t|^{M_5}\int_{2^{j-1}}^{2^{j+1}} \kl|t\lambda \kr|^{-M-M_5}\d\lambda\lesssim 2^{j(n-2\sigma-M)}|t|^{-M}.
\end{Eq*}
Thus, when $|t| \gtrsim 1$, we choose $M > \frac{n-1}{2}$ to ensure that
$|I_{a_{\pm}}(t)|\lesssim 2^{j(\frac{n+1}{2}-2\sigma)}|t|^{-M}$.
On the other hand, for $|t|\lesssim 1$, we take $M=0$ and $M=n>\frac{n-1}{2}$, respectively.
This yields
\begin{Eq*}
|I_{a_{\pm}}(t)|
\lesssim&\min\{2^{j(n-2\sigma)},2^{-2j\sigma}|t|^{-n}\}\lesssim 2^{j(\frac{n+1}{2}-2\sigma)}(2^{-j}+ |t|)^{-\frac{n-1}{2}}.
\end{Eq*}

\noindent  {\bf Case 2: $d\gtrsim 1$.} 
In this case, by applying  \eqref{eq:oscillatory-amplitude-estimate}, we have
\begin{Eq*}
|I_{a_{\pm}}(t)|
\lesssim& e^{-\frac{n-1}{2}d}\sum_{\bf M}2^{j(\frac{n-1}{2}+M_5-2\sigma)}\max\{|t|,d\}^{M_5}\int_{2^{j-1}}^{2^{j+1}} \kl|t\lambda\pm \sqrt{\lambda^2+\kappa^2}d\kr|^{-M-M_5}\d\lambda.
\end{Eq*}

\noindent  {\bf Case 2.1: Additionally $|t|\gg d$.}
In this case, fixing $M$ sufficiently large, we obtain
\begin{Eq*}
|I_{a_{\pm}}(t)|
\lesssim& e^{-\frac{n-1}{2}d}\sum_{\bf M}2^{j(\frac{n-1}{2}+M_5-2\sigma)}|t|^{M_5}\int_{2^{j-1}}^{2^{j+1}} \kl|t\lambda\kr|^{-M-M_5}\d\lambda\\
\lesssim &e^{-\frac{n-1}{2}d} 2^{j(\frac{n+1}{2}-M-2\sigma)}|t|^{-M}\lesssim 2^{j(\frac{n+1}{2}-2\sigma)}|t|^{-M}e^{-(\frac{n-1}{2}-\delta)d}.
\end{Eq*}

\noindent  {\bf Case 2.2: Additionally $|t|\approx d$.} 
Here we do not integrate by parts.  The dyadic size estimate gives
\begin{Eq*}
|I_{a_{\pm}}(t)|
\lesssim& e^{-\frac{n-1}{2}d}2^{j(\frac{n-1}{2}-2\sigma)}\int_{2^{j-1}}^{2^{j+1}} \d\lambda
\lesssim 2^{j(\frac{n+1}{2}-2\sigma)}e^{-\frac{n-1}{2}d}.
\end{Eq*}
Since $|t|\approx d\gtrsim1$, for any $M>0$ we have
\begin{Eq*}
e^{-\frac{n-1}{2}d}\lesssim |t|^{-M}e^{-(\frac{n-1}{2}-\delta)d}.
\end{Eq*}
This gives the desired estimate in this case.

\noindent  {\bf Case 2.3: Additionally $|t|\ll d$.}
In this case, we first observe that for any $M \in \rN_0$,
\begin{Eq*}
|I_{a_{\pm}}(t)|
\lesssim e^{-\frac{n-1}{2}d}\sum_{\bf M}2^{j(\frac{n-1}{2}+M_5-2\sigma)}d^{M_5}\int_{2^{j-1}}^{2^{j+1}} \kl|\sqrt{\lambda^2+\kappa^2}d\kr|^{-M-M_5}\d\lambda
\lesssim &e^{-\frac{n-1}{2}d} 2^{j(\frac{n+1}{2}-M-2\sigma)}d^{-M}.
\end{Eq*}
When $|t| \gtrsim 1$, it clearly follows that
\begin{Eq*}
|I_{a_{\pm}}(t)|
\lesssim & 2^{j(\frac{n+1}{2}-2\sigma)}|t|^{-M} e^{-(\frac{n-1}{2}-\delta)d}.
\end{Eq*}
When $|t| \lesssim 1$, 
using the fact that $d \gtrsim 2^{-j} + |t|$ and taking $M > \frac{n-1}{2}$, 
we also have
\begin{Eq*}
|I_{a_{\pm}}(t)|
\lesssim 2^{j(\frac{n+1}{2}-M-2\sigma)}(2^{-j}+|t|)^{-M}e^{-\frac{n-1}{2}d}
\lesssim  2^{j(\frac{n+1}{2}-2\sigma)}(2^{-j}+|t|)^{-\frac{n-1}{2}}e^{-(\frac{n-1}{2}-\delta)d}.
\end{Eq*}

Combining all the cases above,
we have shown that $I_{a_{\pm}}(t)$ satisfies the estimates required for $I(t)$ in \eqref{eq:high-frequency-half-wave-kernel-estimate}.
As for $I_b(t)$, by applying \cref{claim:high-frequency-oscillatory-integral-expansion} and \eqref{eq:remainder-amplitude-estimate}, we similarly obtain, for any $M' > 0$,
\begin{Eq*}
|I_b(t)|\lesssim& 2^{j(1-2\sigma-M')}e^{-\frac{n-1}{2}d},\qquad |t|\lesssim1,\\
|I_{b}(t)|\lesssim &|t|^{-M}2^{j(1-2\sigma-M-M')}e^{-\frac{n-1}{2}d},
\qquad |t|\gtrsim1,
\end{Eq*}
Choosing $M'$ sufficiently large in the small-time estimate and $M$ sufficiently large in the large-time estimate, these bounds imply the desired estimate for $I(t)$ in \eqref{eq:high-frequency-half-wave-kernel-estimate}.
This completes the proof of \cref{lem:high-frequency-half-wave-kernel-estimate}, assuming the validity of \cref{claim:high-frequency-oscillatory-integral-expansion}.

\subsection{Proof of Claim~\ref{claim:high-frequency-oscillatory-integral-expansion}}
We prove \cref{claim:high-frequency-oscillatory-integral-expansion} by induction on $M$. 
We begin with the $I_{a_{\pm}}$.
The case $M=0$ is immediate.
Assume that the claim holds for some $M\geq0$.
We use the identity
\begin{Eq*}
e^{\II \kl(t\sqrt{\lambda^2+\kappa^2}\pm \lambda d\kr)}=\kl[\frac{\sqrt{\lambda^2+\kappa^2}}{\II\kl(t\lambda\pm \sqrt{\lambda^2+\kappa^2} d\kr)}\partial_\lambda\kr]\kl(e^{\II \kl(t\sqrt{\lambda^2+\kappa^2}\pm \lambda d\kr)}\kr),
\end{Eq*}
The compact support of the dyadic cutoffs removes all boundary contributions in the integrations by parts.
Therefore, we have
\begin{Eq*}
&\int_{2^{j-1}}^{2^{j+1}}e^{\II \kl(t\sqrt{\lambda^2+\kappa^2}\pm \lambda d\kr)} \lambda^{n-1-M_1+M_2+M_{52}}\sqrt{\lambda^2+\kappa^2}^{M-2\sigma-2M_2-M_{52}}\\
&\phantom{\int_{2^{j-1}}^{2^{j+1}}}\quad\cdot[\beta_j^2]^{(M_3)}(\lambda)a_{\pm}^{(M_4)}(\lambda)\kl(t\lambda\pm \sqrt{\lambda^2+\kappa^2}d\kr)^{-M-M_5}|t|^{M_{51}}d^{M_{52}}\d\lambda\\
=&-\II\int_{2^{j-1}}^{2^{j+1}}\partial_\lambda\kl(e^{\II \kl(t\sqrt{\lambda^2+\kappa^2}\pm \lambda d\kr)}\kr) \lambda^{n-1-M_1+M_2+M_{52}}\sqrt{\lambda^2+\kappa^2}^{(M+1)-2\sigma-2M_2-M_{52}}\\
&\phantom{\int_{2^{j-1}}^{2^{j+1}}}\quad\cdot[\beta_j^2]^{(M_3)}(\lambda)a_{\pm}^{(M_4)}(\lambda)\kl(t\lambda\pm \sqrt{\lambda^2+\kappa^2}d\kr)^{-(M+1)-M_5}|t|^{M_{51}}d^{M_{52}}\d\lambda.
\end{Eq*}
Next, we perform integration by parts. 
When $\partial_\lambda$ falls on $\lambda^{a}$, 
we get $a \lambda^{a-1}$, 
which corresponds to $M_1 \to M_1 + 1$;
when $\partial_\lambda$ falls on $\sqrt{\lambda^2+\kappa^2}^{a}$, 
we get $a \lambda \sqrt{\lambda^2+\kappa^2}^{a-2}$, 
which corresponds to $M_2 \to M_2 + 1$;
when $\partial_\lambda$ falls on $[\beta_j^2]^{(M_3)}(\lambda)$ or $a_{\pm}^{(M_4)}(\lambda)$,
we get $[\beta_j^2]^{(M_3+1)}(\lambda)$ or $a_{\pm}^{(M_4+1)}(\lambda)$ separately,
which correspond to $M_3 \to M_3 + 1$ and $M_4 \to M_4 + 1$, respectively;
finally, when $\partial_\lambda$ falls on $\kl(t\lambda\pm \sqrt{\lambda^2+\kappa^2}d\kr)^{a}$,
we get $\kl(t\lambda\pm \sqrt{\lambda^2+\kappa^2}d\kr)^{a-1}\kl(t\pm\frac{\lambda d}{\sqrt{\lambda^2+\kappa^2}}\kr)$,
which corresponds to $M_{51}\rightarrow M_{51}+1$ or $M_{52}\rightarrow M_{52}+1$, and then $M_5\rightarrow M_5+1$.
This completes the induction.

Now we deal with $I_b$.
The case $M=0$ is again immediate.
Assume that the claim holds for some $M\geq0$.
We use the identity
\begin{Eq*}
e^{\II t\sqrt{\lambda^2+\kappa^2}}=\kl[\frac{\sqrt{\lambda^2+\kappa^2}}{\II t\lambda}\partial_\lambda\kr]\kl(e^{\II t\sqrt{\lambda^2+\kappa^2}}\kr),
\end{Eq*}
Therefore, we have
\begin{Eq*}
&|t|^{-M}\sum_{\bf M}C_{\bf M}\int_{2^{j-1}}^{2^{j+1}}e^{\II t\sqrt{\lambda^2+\kappa^2}}
 \lambda^{-M-M_1+M_2}\sqrt{\lambda^2+\kappa^2}^{M-2\sigma-2M_2}[\beta_j^2]^{(M_3)}(\lambda)b^{(M_4)}(\lambda)\d\lambda\\
 =&-\II |t|^{-(M+1)}\sum_{\bf M}C_{\bf M}\int_{2^{j-1}}^{2^{j+1}}\partial_\lambda\kl(e^{\II t\sqrt{\lambda^2+\kappa^2}}\kr)
 \lambda^{-(M+1)-M_1+M_2}\sqrt{\lambda^2+\kappa^2}^{(M+1)-2\sigma-2M_2}\\
&\phantom{-\II |t|^{-(M+1)}\sum_{\bf M}C_{\bf M}\int_{2^{j-1}}^{2^{j+1}}}\quad\cdot [\beta_j^2]^{(M_3)}(\lambda)b^{(M_4)}(\lambda)\d\lambda.
\end{Eq*}
Next, we perform integration by parts. 
When $\partial_\lambda$ falls on the last four terms,
we get terms corresponding to $M_i \to M_i + 1$ with $i=1,2,3,4$, respectively.
This completes the induction.

\section{Strichartz estimates for the Klein-Gordon equation}\label{sec:strichartz-estimates-for-main-equation}
In this section, we prove \cref{thm:main-strichartz-estimate} by combining \cref{prop:low-frequency-half-wave-strichartz} and \cref{prop:high-frequency-half-wave-strichartz}.
We first reduce the spacetime part of the theorem to
\begin{Eq}\label{eq:spacetime-strichartz-reduction}
\|\phi\|_{L_t^q(I;L_z^r(M^\circ))}
\lesssim \|\psi_0\|_{H^{\mu}(M^\circ)}+\|\psi_1\|_{H^{\mu-1}(M^\circ)}+\|F\|_{L_t^{\tilde q'}(I;H_{\tr'}^{\mu+\tilde\mu-1}(M^\circ))},
\end{Eq}
for $(q,r,\mu),(\tilde q,\tr,\tilde \mu)\in \Lambda_w\cup \Lambda_e$, 
defined in \eqref{eq:wave-admissible-triples} and \eqref{eq:euclidean-admissible-triples}.

Recalling that $H_\kappa = -\Delta_g - \frac{(n-1)^2}{4} + \kappa^2$, the solution to \eqref{eq:positive-mass-wave-equation} can be written as
\begin{Eq*}
\phi=&\phi_{h}+\phi_{i},\quad \phi_{h}=\frac{U^0(t)+U^{0}(-t)}{2}\psi_0+\frac{U^0(t)-U^{0}(-t)}{2\II\sqrt{H_\kappa}}\psi_1,\\
\phi_{i}=&\int_{0\leq\tau<t}\frac{U^0(t)U^0(\tau)^*-U^0(-t)U^0(-\tau)^*}{2\II \sqrt{H_\kappa}}F(\tau)\d\tau.
\end{Eq*}
We estimate the homogeneous part $\phi_h$ and the inhomogeneous part $\phi_i$ separately, which completes the proof of \cref{thm:main-strichartz-estimate}.

\subsection{Homogeneous Strichartz estimates}
In this subsection, 
we establish the Strichartz estimate for the homogeneous part of the solution:
\begin{proposition}\label{prop:homogeneous-strichartz-estimate}
Let $\phi_h$ be defined as above. 
Then the following estimate holds:
\begin{Eq}\label{eq:homogeneous-strichartz-estimate}
\|\phi_h(t,z)\|_{L_t^q(I;L_z^r(M^\circ))}\lesssim \|\psi_0\|_{H^\mu(M^\circ)}+\|\psi_1\|_{H^{\mu-1}(M^\circ)},
\end{Eq}
with $(q,r,\mu)\in \Lambda_w\cup \Lambda_e$.
\end{proposition}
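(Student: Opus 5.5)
The plan is to reduce \eqref{eq:homogeneous-strichartz-estimate} to $L^2$-based bounds for the weighted half-wave propagators $U^\sigma(\pm t)$, and then sum the low-frequency and dyadic high-frequency pieces using \cref{prop:low-frequency-half-wave-strichartz} and \cref{prop:high-frequency-half-wave-strichartz}.

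\textbf{Step 1: reduction to one half-wave bound.} By the spectral calculus, for $f$ in a dense class,
\begin{Eq*}
U^0(\pm t)\psi_0=U^{\mu}(\pm t)\bigl(H_\kappa^{\mu/2}\psi_0\bigr),\qquad \frac{U^0(\pm t)}{\sqrt{H_\kappa}}\psi_1=U^{\mu}(\pm t)\bigl(H_\kappa^{(\mu-1)/2}\psi_1\bigr).
\end{Eq*}
By definition \eqref{eq:sobolev-norm-associated-with-hkappa}, $\|H_\kappa^{\mu/2}\psi_0\|_{L^2}=\|\psi_0\|_{H^\mu}$ and $\|H_\kappa^{(\mu-1)/2}\psi_1\|_{L^2}=\|\psi_1\|_{H^{\mu-1}}$. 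Since $I\subset\rR$, it suffices to prove, for $(q,r,\mu)\in\Lambda_w\cup\Lambda_e$,
\begin{Eq*}
\|U^\mu(\pm t)g\|_{L_t^q(\rR;L_z^r)}\lesssim\|g\|_{L^2}.
\end{Eq*}
The sign $-t$ is handled by the change of variables $t\mapsto -t$, which leaves the $L^q_t(\rR)$ norm unchanged. So only $U^\mu(t)$ needs treatment. The general case then follows by density.

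\textbf{Step 2: frequency decomposition and summation.} Write
\begin{Eq*}
U^\mu(t)=U^{\mu,low}(t)+\sum_{k=1}^K\sum_{j\geq1}U^{\mu,high}_{j,k}(t).
\end{Eq*}
\cref{prop:low-frequency-half-wave-strichartz} with $\sigma=\mu$ gives $\|U^{\mu,low}(t)g\|_{L^q_tL^r_z}\lesssim\|g\|_{L^2}$ for every $(q,r)\in[2,\infty]\times(2,\infty]$. For the high part, \cref{prop:high-frequency-half-wave-strichartz} with $\sigma=\mu$ gives
\begin{Eq*}
\|U_{j,k}^{\mu,high}(t)g\|_{L^q_tL^r_z}\lesssim(1+j)2^{j(s-\mu)}\|g\|_{L^2}.
\end{Eq*}
Here $s=s_e$ on $\Lambda_e$ and $s=s_w$ on $\Lambda_w$; on the dividing line $\frac2q=(n-1)(\frac12-\frac1r)$ the two exponents coincide. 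The admissibility condition is the strict inequality $\mu>s$, so $\sum_{j\geq1}(1+j)2^{-j(\mu-s)}<\infty$. Because $K$ is finite, the triangle inequality in $L^q_tL^r_z$, including the case $q=\infty$, sums the high part to $\lesssim\|g\|_{L^2}$.

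\textbf{Main obstacle.} The only delicate point is justifying that the high-frequency bound can be summed with $\|g\|_{L^2}$ on the right-hand side of every piece. This requires no almost-orthogonality, only the geometric decay $2^{-j(\mu-s)}$ together with the fact that each $U^{\mu,high}_{j,k}(t)$ is an honest operator on $L^2$; in particular, the identity $\sum_{j,k}U_{j,k}^{\mu,high}=U^{\mu,high}$ must hold in the $L^2$ sense. I would verify this first on $g\in C_c^\infty$, where the dyadic sum converges strongly in $L^2$ and the spectral-measure representation of $U^\mu$ applies. Once the estimate holds on $C_c^\infty$, it extends to the completed spaces $H^\mu$ and $H^{\mu-1}$ by density, which yields \eqref{eq:homogeneous-strichartz-estimate}.
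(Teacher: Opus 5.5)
Your proposal is correct and follows the paper's proof essentially verbatim. You rewrite $\phi_h$ as $U^\mu(\pm t)$ applied to $H_\kappa^{(\mu-\sigma)/2}\psi_\sigma$, bound the low-frequency part by \cref{prop:low-frequency-half-wave-strichartz}, and sum the dyadic-microlocal bounds $(1+j)2^{j(s-\mu)}$ from \cref{prop:high-frequency-half-wave-strichartz} absolutely using $\mu>s$. Your density and strong-convergence remarks only make explicit what the paper leaves implicit, and no domain issue arises, since every admissible triple has $\mu>s>0$ and so $U^\mu(t)$ is bounded on $L^2$.
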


Using the expression for $\phi_h$, we first write
\begin{Eq*}
\phi_h=&\sum_{\pm}\sum_{\sigma\in\{0,1\}}C_{\pm,\sigma}U^{\sigma}(\pm t)\psi_\sigma=\sum_{\pm}\sum_{\sigma\in\{0,1\}}C_{\pm,\sigma}U^{\mu}(\pm t)\sqrt{H_\kappa}^{\mu-\sigma}\psi_\sigma.
\end{Eq*}
Thus, without loss of generality, it suffices to prove that for any $f(z)$,
\begin{Eq*}
\|U^{\mu}(t)f\|_{L_t^qL_z^r}\lesssim \|f\|_{L^2}.
\end{Eq*}
For the low-frequency part, the desired estimate follows directly from \cref{prop:low-frequency-half-wave-strichartz}.
For the high-frequency part, by applying \cref{prop:high-frequency-half-wave-strichartz}, we obtain
\begin{Eq*}
\|U^{\mu,high}(t)f\|_{L_t^qL_z^r}\lesssim\sum_{k=1}^K\sum_{j=1}^\infty\|U_{j,k}^{\mu,high}(t)f\|_{L_t^qL_z^r}
\lesssim  \sum_{k=1}^K\sum_{j=1}^\infty(1+j)2^{j(s-\mu)}\|f\|_{L^2}.
\end{Eq*}
Since $\mu > s$, the sum over $j$ converges, and the proof of \cref{prop:homogeneous-strichartz-estimate} is complete.

The energy part of \eqref{eq:main-strichartz-estimate} is also controlled by the same homogeneous estimate and its dual.
The homogeneous Strichartz estimate is uniform on time intervals, and therefore its dual estimate is also uniform on subintervals of $I$.
Indeed, the dual form of \cref{prop:homogeneous-strichartz-estimate}, applied with $(\tilde q,\tr,\tilde\mu)$, gives
\begin{Eq*}
\kl\|\int_I e^{-\II \tau\sqrt{H_\kappa}}H_\kappa^{\frac{\mu-1}{2}}F(\tau)\d\tau\kr\|_{L_z^2}
\lesssim \|H_\kappa^{\frac{\mu+\tilde\mu-1}{2}}F\|_{L_t^{\tilde q'}(I;L_z^{\tr'})}.
\end{Eq*}
Consequently, the same bound holds with $I$ replaced by $I\cap(-\infty,t)$, uniformly in $t\in I$.
Using the half-wave representation of the Duhamel term, this yields
\begin{Eq*}
\|\phi_i\|_{L_t^\infty(I;H^\mu)}+\|\partial_t\phi_i\|_{L_t^\infty(I;H^{\mu-1})}
\lesssim \|F\|_{L_t^{\tilde q'}(I;H_{\tr'}^{\mu+\tilde\mu-1})}.
\end{Eq*}
Together with the elementary conservation bound for the homogeneous part, this proves the energy component of \eqref{eq:main-strichartz-estimate}.

\subsection{Inhomogeneous Strichartz estimates for non-double endpoint}
In this subsection, we establish the Strichartz estimate for the inhomogeneous part of the solution in the case $q > \tilde{q}'$.
Specifically, we prove the following:
\begin{proposition}\label{prop:inhomogeneous-strichartz-non-double-endpoint}
Let $\phi_i$ be defined as above. Then the following estimate holds:
\begin{Eq}\label{eq:inhomogeneous-strichartz-non-double-endpoint}
\|\phi_i(t,z)\|_{L_t^q(I;L_z^r(M^\circ))}\lesssim \|F\|_{L_t^{\tilde q'}(I;H_{\tr'}^{\mu+\tilde\mu-1}(M^\circ))},
\end{Eq}
with $(q,r,\mu),(\tilde q,\tr,\tilde \mu)\in \Lambda_w\cup \Lambda_e$ and $q>\tilde q'$.
\end{proposition}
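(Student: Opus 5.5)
The plan is the standard Christ--Kiselev reduction. The retarded Duhamel integral $\phi_i$ is a truncation, to the region $\tau<t$, of the untruncated operator
\begin{Eq*}
\Phi F(t):=\int_I\frac{U^0(t)U^0(\tau)^*-U^0(-t)U^0(-\tau)^*}{2\II\sqrt{H_\kappa}}F(\tau)\d\tau .
\end{Eq*}
Since $q>\tilde q'$, the Christ--Kiselev lemma says that any bound $\|\Phi F\|_{L_t^q(I;L_z^r)}\lesssim\|F\|_{L_t^{\tilde q'}(I;H_{\tr'}^{\mu+\tilde\mu-1})}$ passes to the truncated operator, with a constant depending only on $q,\tilde q$. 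So it suffices to bound $\Phi$.

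Because the kernel is not truncated, $\Phi$ factors into a homogeneous piece and the adjoint of a homogeneous piece. Take one sign, say the $+$ term. Insert powers of $H_\kappa$ and write
\begin{Eq*}
\frac{U^0(t)U^0(\tau)^*}{\sqrt{H_\kappa}}F(\tau)=U^{\mu}(t)\,\Big[U^{\tilde\mu}(\tau)\Big]^*\,H_\kappa^{\frac{\mu+\tilde\mu-1}{2}}F(\tau).
\end{Eq*}
This uses that all these operators are functions of $H_\kappa$ and commute, and that $H_\kappa^{-\mu/2}H_\kappa^{-\tilde\mu/2}H_\kappa^{(\mu+\tilde\mu-1)/2}=H_\kappa^{-1/2}$. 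Consequently
\begin{Eq*}
\Phi_+F(t)=U^{\mu}(t)\,G,\qquad G:=\int_I[U^{\tilde\mu}(\tau)]^*H_\kappa^{\frac{\mu+\tilde\mu-1}{2}}F(\tau)\d\tau .
\end{Eq*}

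The estimate now follows from two applications of the homogeneous bound. In the proof of \cref{prop:homogeneous-strichartz-estimate} we showed $\|U^{\mu}(t)f\|_{L_t^qL_z^r}\lesssim\|f\|_{L^2}$ for $(q,r,\mu)\in\Lambda_w\cup\Lambda_e$; for $U^\mu(-t)$ the same holds by time reversal, and restricting to $I$ only helps. Applying this with $(q,r,\mu)$ gives $\|\Phi_+F\|_{L_t^q(I;L_z^r)}\lesssim\|G\|_{L^2}$. Applying its $TT^*$ dual with $(\tilde q,\tr,\tilde\mu)$, which is exactly the dual estimate already displayed in the energy part of the proof, gives
\begin{Eq*}
\|G\|_{L^2}\lesssim\|H_\kappa^{\frac{\mu+\tilde\mu-1}{2}}F\|_{L_t^{\tilde q'}(I;L_z^{\tr'})}=\|F\|_{L_t^{\tilde q'}(I;H_{\tr'}^{\mu+\tilde\mu-1})}.
\end{Eq*}
The $-$ term is handled the same way. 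Summing the two terms bounds $\Phi$, and Christ--Kiselev then yields \eqref{eq:inhomogeneous-strichartz-non-double-endpoint}.

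Some points need care.
\begin{itemize}
\item First, we should apply Christ--Kiselev to a truncation-friendly operator: the kernel $K(t,\tau)=$ the operator above, mapping $L^{\tilde q'}_t(B_1)\to L^q_t(B_2)$ with $B_1=H_{\tr'}^{\mu+\tilde\mu-1}$ and $B_2=L_z^r$.
\item Second, in the case $r=\infty$ or $\tr=\infty$, the space $L_z^r$ is not reflexive. Still, the lemma only needs $q>\tilde q'$, which forces $\tilde q'<\infty$, and Banach-valued $L^p$ spaces, so nothing changes.
\item Third, the spaces $H_{\tr'}^{s}$ are defined as completions. We therefore argue first for $F$ with $F(\tau)\in C_c^\infty$ and extend by density.
\end{itemize}
The main, though mild, obstacle is justifying the factorization and the duality pairing on these completed spaces. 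This is done on the dense class, where everything is given by the $L^2$ spectral calculus.
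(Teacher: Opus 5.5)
Your proposal is correct and follows the paper's proof. You factor the untruncated Duhamel operator as $U^{\mu}(\pm t)[U^{\tilde\mu}(\pm\tau)]^*H_\kappa^{(\mu+\tilde\mu-1)/2}$, bound it by composing the homogeneous estimate with its dual, and then pass to the retarded integral via the Christ--Kiselev lemma using $q>\tilde q'$. The only difference is cosmetic: the paper re-splits the untruncated bound into low--low, high--high and cross-frequency pieces, whereas you compose the already-proved full estimate $\|U^\mu(t)f\|_{L_t^qL_z^r}\lesssim\|f\|_{L^2}$ with its dual directly, which is slightly cleaner and makes the cross-term discussion unnecessary.
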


Based on the expression for $\phi_i$, we begin with
\begin{Eq*}
\phi_{i}=&\sum_{\pm}C_{\pm}\int_{\tau<t}\frac{U^0(\pm t)U^0(\pm\tau)^*}{\sqrt{H_\kappa}}F(\tau)\d\tau\\
=&\sum_{\pm}C_{\pm}\int_{\tau<t}U^\mu(\pm t)U^{\tilde\mu}(\pm\tau)^*\sqrt{H_\kappa}^{\mu+\tilde \mu-1}F(\tau)\d\tau.
\end{Eq*}
Hence, without loss of generality, it suffices to prove that for any $G(\tau, z)$,
\begin{Eq*}
\kl\|\int_{\tau<t}U^\mu(t)U^{\tilde\mu}(\tau)^*G(\tau)\d\tau\kr\|_{L_t^qL_z^r}\lesssim \|G(\tau)\|_{L_\tau^{\tilde q'}L_z^{\tr'}}.
\end{Eq*}

For the low-low frequency part, applying \cref{prop:low-frequency-half-wave-strichartz} and its dual form, we immediately obtain
\begin{Eq*}
\kl\|\int_\rR U^{\mu,low}(t)U^{\tilde\mu,low}(\tau)^*G(\tau)\d\tau\kr\|_{L_t^qL_z^r}\lesssim \|G(\tau)\|_{L_\tau^{\tilde q'}L_z^{\tr'}}.
\end{Eq*}
Here and below, $s(q,r)$ denotes $s_w$ or $s_e$ according as $(q,r)$ lies in the wave or Euclidean admissible region; on the common boundary the two definitions agree.
For the high-high frequency part, using \cref{prop:high-frequency-half-wave-strichartz} together with its dual, we similarly obtain
\begin{Eq*}
&\kl\|\int_\rR U^{\mu,high}(t)U^{\tilde\mu,high}(\tau)^*G(\tau)\d\tau\kr\|_{L_t^qL_z^r}\\
\lesssim& \sum\Low{k,k'=1}^K\sum_{j,j'=1}^\infty\kl\|\int_\rR U_{j,k}^{\mu,high}(t)U_{j',k'}^{\tilde\mu,high}(\tau)^*G(\tau)\d\tau\kr\|_{L_t^qL_z^r}\\
\lesssim& \sum\Low{k,k'=1}^K\sum_{j,j'=1}^\infty (1+j)(1+j')2^{j(s(q,r)-\mu)}2^{j'(s(\tilde q,\tr)-\tilde \mu)}\kl\|G(\tau)\kr\|_{L_t^{\tilde q'}L_z^{\tr'}}\\
\lesssim&\kl\|G(\tau)\kr\|_{L_t^{\tilde q'}L_z^{\tr'}}.
\end{Eq*}
Here no orthogonality between different dyadic-microlocal pieces is used.  We estimate each operator-valued piece by \cref{prop:high-frequency-half-wave-strichartz} and its dual, and then sum absolutely; the convergence follows from the strict inequalities $\mu>s(q,r)$ and $\tilde\mu>s(\tilde q,\tr)$.  The dependence of $Q_k(\lambda)$ on the spectral parameter is harmless here, since the microlocal partition is finite and uniformly $L^2$-bounded.
For the cross-frequency terms, note that $\chi(\lambda)\beta_j(\lambda)\neq0$ only for finitely many dyadic indices $j$.
Thus all interactions involving one low-frequency cutoff and one high-frequency cutoff reduce to finitely many compact spectral pieces and are controlled by the same estimates, together with the uniform $L^2$ boundedness of the microlocal cutoffs, with no dyadic summation loss.

Combining all the estimates above, we obtain
\begin{Eq*}
\kl\|\int_\rR U^\mu(t)U^{\tilde\mu}(\tau)^*G(\tau)\d\tau\kr\|_{L_t^qL_z^r}\lesssim \|G(\tau)\|_{L_\tau^{\tilde q'}L_z^{\tr'}}.
\end{Eq*}
Since admissibility gives $q\ge2$ and $\tilde q\ge2$, we have $\tilde q'\le2$.
Thus the condition $q>\tilde q'$ fails only when $q=\tilde q'=2$, equivalently $q=\tilde q=2$.
In the present non-double endpoint case, applying the Christ-Kiselev lemma completes the proof.

\subsection{Inhomogeneous Strichartz estimates for double endpoint}
In this subsection, we establish the Strichartz estimate for the inhomogeneous part of the solution in the double endpoint case $q = \tilde{q}' = 2$:
\begin{proposition}\label{prop:inhomogeneous-strichartz-double-endpoint}
Let $\phi_i$ be defined as above.  We obtain
\begin{Eq}\label{eq:inhomogeneous-strichartz-double-endpoint}
\|\phi_i(t,z)\|_{L_t^2(I;L_z^r(M^\circ))}\lesssim \|F\|_{L_t^{2}(I;H_{\tr'}^{\mu+\tilde\mu-1}(M^\circ))},
\end{Eq}
with $(2,r,\mu),(2,\tr,\tilde \mu)\in \Lambda_w\cup \Lambda_e$.
\end{proposition}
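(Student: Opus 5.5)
We reduce, as in the non-double endpoint case, to proving
\begin{Eq*}
\kl\|\int_{\tau<t}U^\mu(t)U^{\tilde\mu}(\tau)^*G(\tau)\d\tau\kr\|_{L_t^2L_z^r}\lesssim \|G\|_{L_\tau^{2}L_z^{\tr'}}.
\end{Eq*}
Christ-Kiselev is not available when $q=\tilde q'=2$, so we estimate the retarded operator directly and do not pass through the untruncated one. We split the frequency into low-low, high-high (indexed by $j,j',k,k'$) and finitely many cross pieces, as before. Each piece is controlled through its kernel, that is, through the $L_z^{\tr'}\to L_z^r$ operator norm of $U^{\mu}_\ast(t)U^{\tilde\mu}_\ast(\tau)^*$ as a function of $t-\tau$. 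The truncation $\tau<t$ is harmless once this norm is integrable in time. Young's inequality in $t$ then gives the $L^2_t\to L^2_t$ bound, because $|\mathbf 1_{\tau<t}K(t-\tau)|\le |K(t-\tau)|$.

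\textbf{Step 1 (low frequencies).} The kernel of $U^{\mu,low}(t)U^{\tilde\mu,low}(\tau)^*$ is given by the same formula as in \cref{lem:low-frequency-half-wave-kernel-estimate}, with $\sqrt{\lambda^2+\kappa^2}^{-\mu-\tilde\mu}$ in place of $\sqrt{\lambda^2+\kappa^2}^{-2\sigma}$. The same proof gives the bound $(1+|t-\tau|)^{-3/2}e^{-(\frac{n-1}{2}-\delta)d}$. We then apply \cref{prop:kunze-stein-convolution-inequality} with mixed exponents $r,\tr\in(2,\infty)$, and use the pointwise bound directly at $r=\infty$. Provided $\delta$ is small, the radial integral converges, with $\alpha=2\min\{r,\tr\}/(r+\tr)$ and $\theta=r\tr/(r+\tr)$. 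This yields an $L^{\tr'}\to L^r$ norm of size $(1+|t-\tau|)^{-3/2}\in L^1_t$, and the low-low retarded estimate follows.

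\textbf{Step 2 (high frequencies).} This is the main obstacle. For $j=j'$ and $k=k'$ we have the kernel bound of \cref{lem:high-frequency-half-wave-kernel-estimate}, namely $2^{j(\frac{n+1}{2}-\mu-\tilde\mu)}(2^{-j}+|t|)^{-\frac{n-1}{2}}$ for $|t|\lesssim1$ and rapid decay otherwise. Interpolating it with the $L^2$ bound gives the dispersive factor $(2^{-j}+|t|)^{-(n-1)(\frac12-\frac1r)}$. At $q=2$ this factor need not be integrable uniformly in $j$. We therefore use the standard Keel-Tao bilinear dyadic decomposition in $|t-\tau|\approx2^{\ell}$, $\ell\le0$, with an interpolated $L^{\tr'}\to L^r$ bound on each dyadic time shell. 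The factor $(1+j)$ and the strict inequalities $\mu>s(2,r)$, $\tilde\mu>s(2,\tr)$ should absorb the loss from summing over $\ell\ge -j$.

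The pieces with $j\ne j'$ or $k\ne k'$ are handled differently. Using the spectral calculus, $U_{j,k}^{\mu,high}(t)U_{j',k'}^{\tilde\mu,high}(\tau)^*$ vanishes unless $|j-j'|\le1$, since the spectral supports are otherwise disjoint. For $|j-j'|\le1$, the kernel is expressed through $Q_kE'Q_{k'}^*$. The microlocalized spectral measure estimates hold for such products as well, as in \cite{MR4169670}: non-matching pieces are treated as in the proof of \cite[Proposition~3.1]{MR4169670}, either because they are microlocally disjoint with an $O(\lambda^{-\infty})$ remainder, or because they reduce to the outgoing/incoming case. In that reference the double endpoint is excluded precisely at $s=s_w=s_e$ with $q=2$, so we exploit that here $\mu$ is strictly above $s$. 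I expect the proof of the estimate for these pairs, together with the summation, to be the delicate point.

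\textbf{Step 3 (summation).} We then sum over $j$, $k$, $k'$ absolutely, using that $K$ is finite, and add the finitely many cross terms. This gives \eqref{eq:inhomogeneous-strichartz-double-endpoint}.
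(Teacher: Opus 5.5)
Your low-frequency step matches the paper's argument. The vanishing of $U_{j,k}^{\mu,high}(t)U_{j',k'}^{\tilde\mu,high}(\tau)^*$ for $|j-j'|\geq2$ is also fine.

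The gap is in Step~2, at the microlocal cross pairs $k\neq k'$, which you yourself flag as the delicate point. You assert that $Q_k(\lambda)E_{\sqrt{H_0}}'(\lambda)Q_{k'}^*(\lambda)$ obeys the same estimates as the diagonal products. Your guiding principle ("the truncation $\tau<t$ is harmless once the $L_z^{\tr'}\to L_z^r$ norm is integrable in $t-\tau$") then covers every piece. This is unjustified and false in general:

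\begin{itemize}
\item The spectral measure propagates along the bicharacteristic relation in both time directions. Disjoint microsupports therefore do not give an $O(\lambda^{-\infty})$ kernel unless they are not joined by the flow at all.
\item For pairs that are not close, for instance through conjugate points, there is in general no two-sided pointwise bound.
\end{itemize}

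What is available is one-sided, and this is the actual content of the high-frequency part of the paper's proof, taken over from \cite[Section~5.2]{MR4169670}. Write the cross kernel with a phase $e^{\II\lambda\Phi}$. The $\lambda$-derivative of the total phase is $(t-\tau)\frac{\lambda}{\sqrt{\lambda^2+\kappa^2}}+\Phi$. It does not vanish when $t-\tau$ and $\Phi$ have the same sign, namely for $t>\tau$ on non-incoming pairs and for $t<\tau$ on non-outgoing pairs. Since $\lambda/\sqrt{\lambda^2+\kappa^2}\approx1$ on the high-frequency support, integration by parts gives decay only on that side.

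Consequently the truncation is essential rather than harmless:
\begin{itemize}
\item On non-incoming pairs, the restriction $\tau<t$ is exactly what lets you use the kernel bound.
\item On non-outgoing pairs, you must write $\int_{\tau<t}=\int_{\rR}-\int_{\tau>t}$. The untruncated term is bounded by the factorization $U_{j,k}^{\mu,high}(t)\int_{\rR}U_{j,k'}^{\tilde\mu,high}(\tau)^*G(\tau)\,\d\tau$, using \cref{prop:high-frequency-half-wave-strichartz} and its dual. The advanced part is handled with the one-sided bound.
\end{itemize}
Your mechanism of two-sided kernel bounds followed by Young in $t$ cannot supply this step.

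A smaller point concerns the diagonal pieces. For $r=\tr$ you do not need Keel--Tao. The $L^1_t$ norm of the interpolated operator norm is $\lesssim(1+j)2^{j(2s-\mu-\tilde\mu)}$, which is the case $q=2$ of the computation in \cref{sec:high-frequency-half-wave-strichartz}, and it sums because $\mu,\tilde\mu>s$. For $r\neq\tr$, however, interpolating the pointwise bound with the $L^2$ bound gives only $L^{r'}\to L^r$. The mixed-exponent shell bound you invoke has to come from factoring through $L^2$ on separated time blocks via the homogeneous estimates, and you should say so.
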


The high-frequency double-endpoint estimate follows from the endpoint summation argument in \cite[Section~5.2, Proposition~5.2]{MR4169670}.
That argument is designed precisely for the case where the Christ-Kiselev lemma cannot be applied.
The inputs used there are the dyadic $L^2\to L^2$ boundedness, the microlocalized high-frequency kernel bound, the rapid decay away from the relevant time scale, and the summability of the dyadic-microlocal pieces.
Lemma \ref{lem:high-frequency-half-wave-kernel-estimate} supplies the corresponding kernel estimate for the present positive-mass phase $\sqrt{\lambda^2+\kappa^2}$, while \cref{prop:high-frequency-half-wave-strichartz} gives the required dyadic summation.
Consequently the same $TT^*$ summation applies here, with the shifted-wave phase in \cite{MR4169670} replaced by $\sqrt{\lambda^2+\kappa^2}$.
The positive-mass modification enters the off-diagonal nonstationary-phase argument.  For the same microlocal decomposition as in \cite[Section~5.2]{MR4169670}, the phase satisfies
\begin{Eq*}
\partial_\lambda\kl((t-\tau)\sqrt{\lambda^2+\kappa^2}+\lambda\Phi\kr)
=(t-\tau)\frac{\lambda}{\sqrt{\lambda^2+\kappa^2}}+\Phi.
\end{Eq*}
On the non-incoming pieces with $t>\tau$, and the non-outgoing pieces with $t<\tau$, $t-\tau$ and $\Phi$ have the same sign.  Since $\lambda/\sqrt{\lambda^2+\kappa^2}$ is bounded above and below by positive constants on the high-frequency support for fixed $\kappa>0$, the same one-sided integration-by-parts argument applies.

For the low-frequency part, it suffices to establish the estimate for an arbitrary $G(\tau, z)$:
\begin{Eq*}
\kl\|\int_{\tau<t}U^{\mu,low}(t)U^{\tilde\mu,low}(\tau)^*G(\tau)\d\tau\kr\|_{L_t^2L_z^r}\lesssim \|G(\tau)\|_{L_\tau^{2}L_z^{\tr'}}.
\end{Eq*}
To apply \cref{lem:low-frequency-half-wave-kernel-estimate} and \cref{prop:kunze-stein-convolution-inequality},
we observe that
\begin{Eq*}
U^{\mu,low}(t)U^{\tilde\mu,low}(\tau)^*
=
U^{(\mu+\tilde\mu)/2,low}(t)U^{(\mu+\tilde\mu)/2,low}(\tau)^*,
\end{Eq*}
and we first consider the case $r,\tr<\infty$.  Define
$\alpha := \frac{2 \min\{r, \tr\}}{r + \tr}$ and $\theta := \frac{r \tr}{r + \tr}$,
which satisfy $\alpha + \theta > 2$ for $r,\tr>2$.
Then, for sufficiently small $\delta > 0$, we have
\begin{Eq*}
&\kl\|U^{\mu,low}(t)U^{\tilde\mu,low}(\tau)^*G(\tau)\kr\|_{L_z^r}\\
\lesssim&(1+|t-\tau|)^{-\frac{3}{2}}\kl(\int_0^{\infty}(1+R)^{\alpha}e^{(n-1)\kl(1-\frac{\alpha}{2}-\frac{\theta}{2}\kr)R+\theta\delta R}\d R\kr)^{\frac{1}{\theta}}\|G(\tau)\|_{L_z^{\tr'}}\\
\lesssim&(1+|t-\tau|)^{-\frac{3}{2}}\|G(\tau)\|_{L_z^{\tr'}}.
\end{Eq*}
Moreover, applying Minkowski's inequality followed by Young's inequality gives
\begin{Eq*}
\kl\|\int_{\tau<t}U^{\mu,low}(t)U^{\tilde\mu,low}(\tau)^*G(\tau)\d\tau\kr\|_{L_t^2L_z^r}
\lesssim&\kl\|\int{\bf 1}_{>0}(t-\tau)(1+|t-\tau|)^{-\frac{3}{2}}\|G(\tau)\|_{L_z^{\tr'}}\d\tau\kr\|_{L_t^2}\\
\lesssim&\kl(\int(1+|t|)^{-\frac{3}{2}}\d t\kr)\|G(\tau)\|_{L_\tau^2L_z^{\tr'}}\\
\lesssim&\|G(\tau)\|_{L_\tau^2L_z^{\tr'}}.
\end{Eq*}
If one of the spatial exponents is infinite, the preceding Kunze-Stein computation is replaced by the corresponding pointwise kernel estimate and H\"older's inequality.  For instance, when $r=\infty$ and $\tr<\infty$, let $T_{t-\tau}$ denote the operator with kernel $K(t-\tau;z,z')$ satisfying \cref{lem:low-frequency-half-wave-kernel-estimate}.  Then
\begin{Eq*}
\|T_{t-\tau}f\|_{L_z^\infty}
\leq \sup_z \|K(t-\tau;z,\cdot)\|_{L_z^{\tr}}\|f\|_{L_z^{\tr'}}
\lesssim(1+|t-\tau|)^{-\frac{3}{2}}\|f\|_{L_z^{\tr'}}.
\end{Eq*}
The case $r<\infty$, $\tr=\infty$ follows by duality.  If $r=\tr=\infty$, the pointwise kernel bound gives directly the $L_z^1\to L_z^\infty$ estimate.  In all cases, the time convolution estimate above is unchanged.
This completes the proof.

\section{Some applications}\label{sec:applications}
In this section, we apply \cref{thm:main-strichartz-estimate} to a class of semilinear wave equations and establish global existence results for small initial data in suitable function spaces. Specifically, we prove \cref{thm:strauss-type-global-existence}, \cref{thm:glassey-type-global-existence}, and \cref{thm:smooth-nonlinearity-global-existence}.

\subsection{Proof of Theorem~\ref{thm:strauss-type-global-existence}}
The existence of a global solution follows directly from the standard iteration method.
We construct a sequence by setting $\phi_{-1} := 0$ and defining $\phi_k$ for $k \geq 0$ to satisfy
\begin{Eq*}
\begin{cases}
\kl(\partial_{t}^2-\Delta_g -\frac{(n-1)^2}{4}+\kappa^2\kr) \phi_k=F_p(\phi_{k-1}); \\ (\phi_k,\partial_t\phi_k)_{t=0}=\varepsilon(\psi_0,\psi_1).
\end{cases}
\end{Eq*}
Choose $(q,r,\mu)=(\tq,\tr,\tmu)=(p+1,p+1,1/2)$ in \cref{thm:main-strichartz-estimate}.
It is straightforward to verify that $(p+1,p+1,1/2)\in\Lambda_e$ while $p\in(1,1+\frac{4}{n-1})$.
Therefore, we obtain the estimates
\begin{Eq*}
\|\phi_k\|_{L_{t,z}^{p+1}}\lesssim& \varepsilon(\|\psi_0\|_{H^{1/2}}+\|\psi_1\|_{H^{-1/2}})+\|F_p(\phi_{k-1})\|_{L_{t,z}^{\frac{p+1}{p}}}
\lesssim\varepsilon+\|\phi_{k-1}\|_{L_{t,z}^{p+1}}^p,\\
\|\phi_{k+1}-\phi_k\|_{L_{t,z}^{p+1}}\lesssim &\|F_p(\phi_k)-F_p(\phi_{k-1})\|_{L_{t,z}^{\frac{p+1}{p}}}\lesssim \kl(\max_{\sigma=0,1}\|\phi_{k-\sigma}\|_{L_{t,z}^{p+1}}\kr)^{p-1}\|\phi_k-\phi_{k-1}\|_{L_{t,z}^{p+1}}.
\end{Eq*}
It follows that for $0 < \varepsilon < \varepsilon_0$ with $\varepsilon_0$ sufficiently small,
the sequence $\|\phi_k\|_{L_{t,z}^{p+1}}$ is uniformly bounded and $\|\phi_{k+1}-\phi_k\|_{L_{t,z}^{p+1}}\leq \|\phi_{k}-\phi_{k-1}\|_{L_{t,z}^{p+1}}/2$.
This implies that  $\{\phi_k\}$ converges in $L_{t,z}^{p+1}$ to a solution $\phi$ of \eqref{eq:strauss-type-nonlinear-equation}, satisfying $\|\phi\|_{L_{t,z}^{p+1}}<\infty$.
The energy part of \cref{thm:main-strichartz-estimate} also gives $(\phi,\partial_t\phi)\in L_t^\infty H^{1/2}\times L_t^\infty H^{-1/2}$.
A similar argument yields uniqueness of the solution, which we omit here.

Next, we prove that ${\bphi}:=(\phi,\partial_t\phi)$ scatters in $\bH^{1/2}:=H^{1/2}\times H^{-1/2}$.
For any  $0<T_1<T_2$, 
applying the energy estimate in \cref{thm:main-strichartz-estimate} with $(q,r,\mu)=(\tq,\tr,\tmu)=(p+1,p+1,1/2)$, we obtain
\begin{Eq*}
\|L(-T_2)\bphi(T_2)-L(-T_1)\bphi(T_1)\|_{\bH^{1/2}}=&\|\bphi(T_2)-L(T_2-T_1)\bphi(T_1)\|_{\bH^{1/2}}\\
\lesssim& \sup_{t\in[0,T_2-T_1]}\|\bphi(t+T_1)-L(t)\bphi(T_1)\|_{\bH^{1/2}}\\
\lesssim&\|(\partial_t^2+H_\kappa)\phi(t+T_1)\|_{L_{t,z}^{\frac{p+1}{p}}([0,T_2-T_1]\times M^\circ)}\\
\lesssim&\|\phi\|_{L_{t,z}^{p+1}([T_1,T_2]\times M^\circ)}^p.
\end{Eq*}
This implies that
\begin{Eq*}
\sup_{T\geq 0}&\|L(-T)\bphi(T)\|_{\bH^{1/2}}\lesssim \|\bphi(0)\|_{\bH^{1/2}}+\sup_{T\geq 0}\|\phi\|_{L_{t,z}^{p+1}([0,T]\times M^\circ)}^p<\infty,\\
\lim_{T_1,T_2\rightarrow +\infty}&\|L(-T_2)\bphi(T_2)-L(-T_1)\bphi(T_1)\|_{\bH^{1/2}}\lesssim \lim_{T_1,T_2\rightarrow +\infty}\|\phi\|_{L_{t,z}^{p+1}([T_1,T_2]\times M^\circ)}^p=0.
\end{Eq*}
By the Cauchy criterion, 
we may define $(\psi_0^+,\psi_1^+):=\lim_{T\rightarrow+\infty} L(-T)\bphi(T)$, 
so that \eqref{eq:scattering-for-strauss-type-solution} holds. This completes the proof.

\subsection{Proof of Theorem~\ref{thm:glassey-type-global-existence}}\label{subsec:proof-of-glassey-type-global-existence}
To begin with, we construct a sequence by setting $\phi_{-1} := 0$ and defining $\phi_k$ for $k \geq 0$ to satisfy
\begin{Eq*}
\begin{cases}
\kl(\partial_{t}^2-\Delta_g -\frac{(n-1)^2}{4}+\kappa^2\kr) \phi_k=F_p(\partial_t\phi_{k-1}); \\ (\phi_k,\partial_t\phi_k)_{t=0}=\varepsilon(\psi_0,\psi_1).
\end{cases}
\end{Eq*}

In the remainder of the proof, we work in the $(t, z) = (t, r, \Theta)$ coordinate framework.
We begin by introducing the notation used in this section:
\begin{Eq*}
\tpartial f:=\kl(\partial_t f,\partial_r f+\frac{n-1}{2\tanh r}f,\spartial f\kr),
\end{Eq*}
where $\spartial f:=(\sinh r)^{-1}\,\nabla_\Theta f$ denotes the hyperbolically normalized angular derivative in the spherical variables, with $\nabla_\Theta$ taken with respect to the standard sphere metric.
We shall use the following positive-mass local energy estimate, whose proof is given in Appendix~\ref{subsec:proof-of-positive-mass-local-energy-estimate}.
\begin{proposition}\label{prop:positive-mass-local-energy-estimate}
Let $n\geq 3$, $\kappa>0$, and assume that $g$ satisfies \cref{hyp:small-asymptotic-decay}. 
Define the Klein-Gordon operator
\begin{Eq*}
 P_g:=\partial_t^2-\Delta_g-\frac{(n-1)^2}{4}+\kappa^2.
\end{Eq*}
Then, for any fixed $0<2a<b<1$, we have
\begin{Eq*}
\|f\|_{[ E\cap  {LE}]}\lesssim& \|\tpartial^{\leq 1} f(0)\|_{L_z^2}+\|r^{a-\frac{1}{2}}e^{-a r}f\|_{L_{t,z}^2}+\| P_g f\|_{[L_t^1L_z^2+{LE}^*]},
\end{Eq*}
for any smooth function $f$ whose spatial support is compact on each finite time interval, 
with implicit constant depending  on $a$, $\kappa$, and the constants in \cref{hyp:small-asymptotic-decay},
where
\begin{Eq*}
\|f\|_{ E}:=&\kl\|(\tpartial^{\leq 1},r^{-1}) f\kr\|_{L_t^\infty L_z^2}\approx \|\partial_t f\|_{L_t^\infty L_z^2}+\|f\|_{L_t^\infty H^1},\\
\|f\|_{ {LE}}:=&\|r^{a-\frac{1}{2}}e^{-a r}(\tpartial,r^{-1})f\|_{L_t^2 L_z^2},\quad 
\|F\|_{ {LE}^*}:=\|r^{\frac{1}{2}-a}e^{a r}F\|_{L_t^2 L_z^2}.
\end{Eq*}
\end{proposition}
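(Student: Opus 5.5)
The estimate is a Morawetz-type multiplier estimate, proved by the same vector field method as in \cite{DaiWang2026} for the shifted wave equation. The mass term is put to the right-hand side as the zeroth-order remainder $\|r^{a-\frac12}e^{-ar}f\|_{L^2_{t,z}}$.

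\emph{Conjugation and energy part.} First conjugate by the volume density. Writing $u=(\sinh r)^{\frac{n-1}{2}}f$ in the hyperbolic model, the operator $-\Delta_\eta-\frac{(n-1)^2}{4}$ becomes $-\partial_r^2-(\sinh r)^{-2}\Delta_{\rS^{n-1}}+V(r)$ with $V(r)=\frac{(n-1)(n-3)}{4\sinh^2 r}\ge0$ for $n\ge3$. This is why the modified derivative $\tpartial$ appears: $\partial_r u=(\sinh r)^{\frac{n-1}{2}}(\partial_r+\frac{n-1}{2\tanh r})f$. The $E$ part then follows from the standard energy identity for $P_g$, multiplying by $\partial_t f$. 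The metric perturbation $\gamma=g-\eta$ contributes error terms of size $\varepsilon' e^{-br}|\partial f|^2$. Since $2a<b$, these are bounded by $\varepsilon'\|f\|_{LE}^2$, because $e^{-br}\lesssim r^{2a-1}e^{-2ar}$ for large $r$, with a separate treatment near $r=0$. The forcing is handled by duality: $\|F\|_{L^1_tL^2_z}\|f\|_E+\|F\|_{LE^*}\|f\|_{LE}$.

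\emph{Morawetz part.} Multiply $P_\eta f$ by $h(r)\bigl(\partial_r+\frac{n-1}{2\tanh r}\bigr)f+w(r)f$, with a bounded increasing radial weight. A natural choice is $h(r)=\int_0^r\rho^{2a-1}e^{-2a\rho}(1+\rho)^{-2a}\,\d\rho$ modified near $0$, adapted from the Euclidean choice $r/(r+R)$. Here $h'\approx r^{2a-1}e^{-2ar}$ gives positive radial control. The angular term has coefficient $h\coth r/\sinh^2 r-h'/\sinh^2r$ in normalized form, which controls $\spartial f$ with weight at least $r^{2a-1}e^{-2ar}$ because $h/\tanh r\ge h'$. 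The $r^{-1}$ term comes from a Hardy-type inequality near the origin, using the positivity of $V$ or the $\frac{n-1}{2\tanh r}$ correction. Choose $w$, for instance $w=\frac12h'$, so that the zeroth-order terms from $-\Delta$ and the $\partial_t^2$ cross terms cancel, as in the shifted-wave computation.

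\emph{The mass term.} This is the only genuinely new term. Against the multiplier, $\kappa^2 f\cdot(hf_r+wf)$ integrates by parts to $\kappa^2\int\bigl(w-\frac12 h'-\ldots\bigr)f^2$, a sign-indefinite term of the form $\kappa^2\int O(h')|f|^2$. It is simply bounded by $\|r^{a-\frac12}e^{-ar}f\|^2_{L^2_{t,z}}$ and placed on the right-hand side.

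\emph{Conclusion.} The boundary terms at $t=0$ and $t=T$ are controlled by the energy, and hence by the $E$ norm already estimated. The perturbative errors from $\gamma$ carry a factor $\varepsilon'$ and are absorbed. Adding the two estimates and using the smallness of $\varepsilon'$ closes the proof. The compact spatial support justifies all integrations by parts.

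The main obstacle I expect is making the weights compatible. Positivity of both the radial and angular coefficients, together with the Hardy $r^{-1}$ term, must hold uniformly in $r$ for the specific weight $r^{a-\frac12}e^{-ar}$. At the same time, the $\gamma$ errors, with decay $e^{-br}$, must be dominated by that weight, which is where $2a<b$ enters. I would first verify the hyperbolic case $\gamma=0$ with an explicit $h$, and then treat $\gamma$ as a perturbation.
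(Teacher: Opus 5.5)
Your route is the paper's. Your multiplier $h(\partial_r+\frac{n-1}{2\tanh r})f$ is exactly the paper's pair $(X,\chi)=(h\partial_r,\,h\frac{n-1}{2\tanh r})$, and your conjugation $u=(\sinh r)^{\frac{n-1}{2}}f$ is the same computation in other variables; your $V$ is the paper's identity $w^2+w'-\frac{(n-1)^2}{4}=\frac{(n-1)(n-3)}{4\sinh^2r}$. As in the paper, the mass term goes to the right-hand side with weight $h'$, and the $\gamma$-errors are absorbed using $b>2a$. Two of your specific choices, however, do not give the stated estimate as written.

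First, do not add the correction $wf$ with $w=\frac12h'$. Multiplying $\tilde Pu$ by $hu_r+wu$, where $\tilde P=\partial_t^2-\partial_r^2-\sinh^{-2}r\,\Delta_{\rS^{n-1}}+V+\kappa^2$, gives modulo divergences the bulk term
\[
\Bigl(\tfrac{h'}{2}-w\Bigr)\bigl(|\partial_tu|^2-\kappa^2u^2\bigr)+\Bigl(\tfrac{h'}{2}+w\Bigr)|\partial_ru|^2+\Bigl(h\coth r-\tfrac{h'}{2}+w\Bigr)|\spartial u|^2-\tfrac12\bigl((Vh)'+w''-2Vw\bigr)u^2 .
\]
So your choice removes the $\partial_tf$ component of $LE$, which the statement requires. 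It also, tellingly, removes the mass term. The correct choice is $w=0$, as in the paper. There the Hardy correction $\tw=\frac{n-1}{2\tanh r}-r^{-1}$ enters only through the divergence term $Y=-\frac12h'\tw f^2\partial_r$, not through the multiplier; it yields the $r^{2a-3}e^{-2ar}f^2$ control near the origin that you get from a Hardy inequality instead. The identity also explains why the remainder $\|r^{a-\frac12}e^{-ar}f\|_{L^2_{t,z}}$ cannot be avoided: $|\partial_tu|^2$ and $\kappa^2u^2$ carry the same coefficient. Hence any positive control of $\partial_tf$ with weight $h'$ comes with $-\frac{\kappa^2}{2}h'u^2$.

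Second, drop the factor $(1+\rho)^{-2a}$ in $h$. With it, $h'\approx r^{-1}e^{-2ar}$ as $r\to\infty$, which is weaker than the required $r^{2a-1}e^{-2ar}$. Without it, $h=\int_0^r\rho^{2a-1}e^{-2a\rho}\,d\rho$ is already bounded; the paper uses $1+\int_0^ra\rho^{2a-1}e^{-2a\rho}\,d\rho$. It also satisfies $h\coth r\geq h'/(2a)$, so the angular and $V$-terms in the display are nonnegative. With these two fixes the rest of your argument goes through.
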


The subsequent argument differs from that in \cite{DaiWang2026} in the treatment of the local energy estimate,
which, in our case, involves the additional lower-order term in the local energy estimate stated above.
However, thanks to the Strichartz estimates \cref{thm:main-strichartz-estimate} established in the present work, this term can be controlled.

Since the metric, the initial data, and the nonlinearity in \cref{thm:glassey-type-global-existence} are spherically symmetric, each Picard iterate remains spherically symmetric.
For the iteration, we measure the approximate solutions in the following energy-local energy norm:
\begin{Eq*}
\varE_k:=\|\partial_t^{\leq 1}\phi_k\|_{E\cap LE},
\end{Eq*}
with $a>0$ fixed below.
We show that 
for sufficiently small $\varepsilon$, 
the bound $\varE_k \lesssim \varepsilon$ holds uniformly, 
with the implicit constant independent of $k$.

First, observe that $\varE_{-1} = 0\lesssim\varepsilon$. 
Next, we assume that for some $k$, we have $\varE_{k-1} \lesssim \varepsilon$. Applying \cref{prop:positive-mass-local-energy-estimate} to $f=\phi_k$ and $f=\partial_t\phi_k$, and using $[\partial_t,P_g]=0$, we obtain
\begin{Eq*}
\varE_k\lesssim& \|\tpartial^{\leq 1}\partial_t^{\leq 1}\phi_k(0)\|_{L^2}
+\|r^{a-\frac{1}{2}}e^{-ar}\partial_t^{\leq 1}\phi_k\|_{L_{t,z}^2}
+\|e^{2ar}\partial_t^{\leq 1}F_p(\partial_t\phi_{k-1})\|_{L_{t,z}^2}\\
\equiv&I_1+I_2+I_3.
\end{Eq*}
Here we used the elementary bound
$\|G\|_{L_t^1L_z^2+LE^*}\leq \|G\|_{LE^*}\lesssim \|e^{2ar}G\|_{L_{t,z}^2}$,
which follows from the definition of $LE^*$ and the boundedness of $r^{\frac12-a}e^{-ar}$.
For $I_1$, we use the equation at $t=0$.
Since $\phi_k(0)=\varepsilon\psi_0$ and $\partial_t\phi_k(0)=\varepsilon\psi_1$, we have
\begin{Eq*}
\partial_t^2\phi_k(0)=-\varepsilon H_\kappa\psi_0+F_p(\varepsilon\psi_1).
\end{Eq*}
Thus the assumption $(\psi_0,\psi_1)\in H^2\times H^1$, together with the smallness of $\varepsilon$, gives $I_1\lesssim\varepsilon$.
When the linear estimates are applied to $\partial_t\phi_k$, the forcing term is $\partial_tF_p(\partial_t\phi_{k-1})$, and it is included in $I_3$ below.
For $I_2$, first fix $R_1>0$ sufficiently small.  In the region $r\leq R_1$, we have
\begin{Eq*}
I_{2;(r\leq R_1)}:=&\|r^{a-\frac{1}{2}}e^{-ar}\partial_t^{\leq 1}\phi_k\|_{L_{t,z}^2(r\leq R_1)}\\
\leq& R_1\|r^{a-\frac{1}{2}}e^{-ar}\cdot r^{-1}\partial_t^{\leq 1}\phi_k\|_{L_{t,z}^2(r\leq R_1)}
\leq R_1\varE_k,
\end{Eq*} 
which can be absorbed into the left-hand side by choosing $R_1$ small.
For the region where $r\geq R_1$, 
we use H\"older's inequality and \cref{thm:main-strichartz-estimate} with 
$(q,r,\mu)=(\tilde q, \tilde r,\tilde\mu)=(2,\frac{2n}{n-2a},\frac{1}{2})$, yielding
\begin{Eq*}
I_{2;(r\geq R_1)}:=&\|r^{a-\frac{1}{2}}e^{-ar}\partial_t^{\leq 1}\phi_k\|_{L_{t,z}^2(r\geq R_1)}
\lesssim  \|r^{a-\frac{1}{2}}e^{-ar}\|_{L^{n/a}(r\geq R_1)}\|\partial_t^{\leq 1}\phi_k\|_{L_t^2L^{2n/(n-2a)}}\\
\lesssim& \|\partial_t^{\leq 1}\phi_k(0)\|_{H^{1/2}}+\|\partial_t\partial_t^{\leq 1}\phi_k(0)\|_{H^{-1/2}}+\|(\partial_t^2+H_\kappa)\partial_t^{\leq 1}\phi_k\|_{L^2L^{2n/(n+2a)}}\\
\lesssim&\|\tpartial^{\leq 1}\partial_t^{\leq 1}\phi_k(0)\|_{L^2}+\|e^{2ar}\partial_t^{\leq 1}(\partial_t^2+H_\kappa)\phi_k\|_{L_{t,z}^2} \|e^{-2ar}\|_{L^{n/a}(r\geq R_1)}\\
\lesssim&I_1+I_3.
\end{Eq*}

Finally, for $I_3$, we again use H\"older's inequality.
Since $p \in (1, 1 + \frac{1}{n - 2})$, the quantities in the following minimum are positive.  We choose $a>0$ satisfying
\begin{Eq*}
a<\min\kl\{\frac{b}{2},\frac{p}{2}-\frac{(p-1)(n-1)}{2},\frac{(p-1)(n-1)}{6}\kr\}.
\end{Eq*}
We shall use the Hardy trace estimate recorded in \cite[(2.5)]{DaiWang2026}:
\begin{Eq}\label{eq:hardy-type-inequality}
\|r^{-1}f\|_{L_z^2}+\sup_{R>0}
\frac{\sinh^{(n-1)/2} R }{\sqrt{R}}\|f(R,\Theta)\|_{L_\Theta^2(\rS^{n-1})}
\lesssim \|\partial_rf+\frac{n-1}{2 \tanh r} f\|_{L_z^2}.
\end{Eq}
Since $\phi_{k-1}$ is spherically symmetric, this estimate applies to the radial functions below, and we obtain
\begin{Eq*}
I_{3}\lesssim&\kl\|r^{a-\frac{p}{2}}e^{-ar}\sinh^{\frac{(p-1)(n-1)}{2}}r\partial_t^{\leq 1}F_p(\partial_t\phi_{k-1})\kr\|_{L_{t,z}^2}\kl\|r^{\frac{p}{2}-a}e^{3ar}\sinh^{\frac{(p-1)(1-n)}{2}}r\kr\|_{L_r^\infty}\\
\lesssim&\kl\|r^{-\frac{1}{2}}\sinh^{\frac{n-1}{2}}r\partial_t\phi_{k-1}\kr\|_{L_{t,z}^\infty }^{p-1}\kl\|r^{a-\frac{1}{2}}e^{-ar}\partial_t^{\leq 1}\partial_t\phi_{k-1}\kr\|_{L_{t,z}^2},\\
\lesssim&\kl\|\tpartial\partial_t\phi_{k-1}\kr\|_{L_{t}^\infty L_z^2}^{p-1}\kl\|r^{a-\frac{1}{2}}e^{-ar}\tpartial\partial_t^{\leq 1}\phi_{k-1}\kr\|_{L_{t,z}^2}\\
\lesssim&\|\partial_t\phi_{k-1}\|_{E}^{p-1}\|\partial_t^{\leq 1}\phi_{k-1}\|_{LE}\lesssim \varE_{k-1}^p.
\end{Eq*}

In summary, we have shown that $\varE_k \lesssim \varepsilon + \varE_{k-1}^p$.
This implies that when $\varepsilon$ is sufficiently small, 
the sequence satisfies $\varE_k \lesssim \varepsilon$ uniformly in $k$.
For the contraction, we use the lower-order norm
\begin{Eq*}
\varF_k:=\|\phi_{k+1}-\phi_k\|_{E\cap LE}.
\end{Eq*}
The difference $\phi_{k+1}-\phi_k$ solves the same linear equation with zero initial data and forcing
$F_p(\partial_t\phi_k)-F_p(\partial_t\phi_{k-1})$.
Using the bound on $F_p'$ and the same Hardy estimate as above, we obtain
\begin{Eq*}
\|e^{2ar}(F_p(\partial_t\phi_k)-F_p(\partial_t\phi_{k-1}))\|_{L_{t,z}^2}
\lesssim(\varE_k+\varE_{k-1})^{p-1}\varF_{k-1}.
\end{Eq*}
Thus, for sufficiently small $\varepsilon$, $\varF_k\leq \varF_{k-1}/2$,
which in turn implies that the sequence $\{\phi_k\}$ converges to a solution $\phi$ of \eqref{eq:glassey-type-derivative-equation}.
A more detailed discussion of this standard lower-order contraction argument can be found in our previous work \cite{DaiWang2026}.
In particular, for the limiting function $\phi$, 
by applying the same type of estimate used for $I_3$,
we conclude that $\|e^{2ar}F_p(\partial_t\phi)\|_{L_{t,z}^2}<\infty$.
The same lower-order difference estimate also gives uniqueness in this solution class, and we omit the standard details.

The uniform bound on $\varE_k$ passes to the limit and, in particular, gives
\begin{Eq*}
\partial_t\phi\in L_t^\infty H^1,\qquad \partial_t^2\phi\in L_t^\infty L_z^2.
\end{Eq*}
Moreover, since $p<1+\frac{1}{n-2}<\frac{n}{n-2}$, the Sobolev embedding $H^1\subset L^{2p}$ gives
\begin{Eq*}
\|\partial_t\phi\|_{L_t^\infty L_z^{2p}}
\lesssim \|\partial_t\phi\|_{L_t^\infty H^1}<\infty.
\end{Eq*}
Hence $F_p(\partial_t\phi)\in L_t^\infty L_z^2$.  Using the equation, we therefore obtain
\begin{Eq*}
H_\kappa\phi=F_p(\partial_t\phi)-\partial_t^2\phi\in L_t^\infty L_z^2.
\end{Eq*}
This implies $\phi\in L_t^\infty H^2$.

Next, we prove ${\bphi}:=(\phi,\partial_t\phi)$ scatters in the space $\bH^{1}:=H^{1}\times L^2$.
For any $0<T_1 < T_2$, let $\phi_L(t)$ be the first component of the linear solution $L(t)\bphi(T_1)$, and set
\begin{Eq*}
w(t):=\phi(t+T_1)-\phi_L(t),\qquad 0\leq t\leq T_2-T_1.
\end{Eq*}
Then $w$ has zero initial data and forcing $F_p(\partial_t\phi(t+T_1))$.
By applying the energy identity to $L(T)$ and using \cref{prop:positive-mass-local-energy-estimate}, we obtain
\begin{Eq*}
&\|L(-T_2)\bphi(T_2)-L(-T_1)\bphi(T_1)\|_{\bH^{1}}
=\|\bphi(T_2)-L(T_2-T_1)\bphi(T_1)\|_{\bH^{1}}\\
\lesssim& \sup_{t\in[0,T_2-T_1]}\|\bphi(t+T_1)-L(t)\bphi(T_1)\|_{\bH^{1}}\\
\lesssim&\|w\|_{E\cap LE([0,T_2-T_1]\times M^{\circ})}\\
\lesssim&\|r^{a-\frac{1}{2}}e^{-ar}w\|_{L_{t,z}^2([0,T_2-T_1]\times M^{\circ})}
+\|e^{2ar}F_p(\partial_t\phi(t+T_1))\|_{L_{t,z}^2([0,T_2-T_1]\times M^{\circ})}.
\end{Eq*}
For the first term, we repeat the estimate for $I_2$ above: in $r\leq R_1$ it is absorbed into the $LE$ norm after fixing $R_1$ sufficiently small, while in $r\geq R_1$ it is controlled by the Strichartz estimate and hence by the forcing term.
As a result, we deduce that
\begin{Eq*}
\|L(-T_2)\bphi(T_2)-L(-T_1)\bphi(T_1)\|_{\bH^{1}}\lesssim&\|w\|_{E\cap LE([0,T_2-T_1]\times M^{\circ})}\\
\lesssim &\|e^{2ar}F_p(\partial_t\phi(t+T_1))\|_{L_{t,z}^2([0,T_2-T_1]\times M^{\circ})}\\
=&\|e^{2ar}F_p(\partial_t\phi)\|_{L_{t,z}^2([T_1,T_2]\times M^{\circ})}
\end{Eq*}

This means that
\begin{Eq*}
\sup_{T\geq 0}&\|L(-T)\bphi(T)\|_{\bH^{1}}\lesssim \|\bphi(0)\|_{\bH^{1}}+\sup_{T\geq 0}\|e^{2ar}F_p(\partial_t\phi)\|_{L_{t,z}^2([0,T]\times M^{\circ})}<\infty,\\
\lim_{T_1,T_2\rightarrow +\infty}&\|L(-T_2)\bphi(T_2)-L(-T_1)\bphi(T_1)\|_{\bH^{1}}\lesssim \lim_{T_1,T_2\rightarrow +\infty}\|e^{2ar}F_p(\partial_t\phi)\|_{L_{t,z}^2([T_1,T_2]\times M^{\circ})}=0.
\end{Eq*}
By the Cauchy criterion, 
we may define $(\psi_0^+,\psi_1^+):=\lim_{T\rightarrow+\infty} L(-T)\bphi(T)$, 
so that \eqref{eq:scattering-for-glassey-type-solution} holds. 
This completes the proof.

\subsection{Proof of Theorem~\ref{thm:smooth-nonlinearity-global-existence}}
In this theorem, 
the perturbative assumption \cref{hyp:small-asymptotic-decay} is available only in the exterior region,
while \cref{hyp:local-energy} is used in the region where $r$ is not large.
Thus we cannot apply \cref{prop:positive-mass-local-energy-estimate} directly on the original metric $g$.
To address this issue, we follow an approach inspired by the analysis on asymptotically Euclidean spaces (see, e.g., \cite{MetcalfeWang2017}),
and first derive a global local energy estimate adapted to this mixed framework.
For convenience, we denote $\square_g:=\partial_t^2-\Delta_g-(n-1)^2/4+\kappa^2$.
We now state the following result, whose proof is given in Appendix~\ref{subsec:proof-of-global-local-energy-estimate}.

\begin{proposition}\label{prop:global-local-energy-estimate}
Let $n \geq 3$, and assume that $\kappa>0$ and $(M^\circ, g)$ are as in \cref{thm:main-strichartz-estimate}.
In addition, suppose that $g$ can be rewritten in the $(r, \Theta)$ coordinate system and satisfies \cref{hyp:local-energy}, and satisfies \cref{hyp:small-asymptotic-decay} in the region $r>R_0$ for some sufficiently large $R_0$.
Then, for any fixed $0<2a<b<1$ and any integer $M\geq 0$, we have
\begin{Eq*}
\|Z^{\leq M}f\|_{[ E\cap  \widetilde{LE}]}
\lesssim& \|\tpartial^{\leq 1}Z^{\leq M} f(0)\|_{L_z^2}
+\|Z^{\leq M-1}\square_g f(0)\|_{L_z^2}\\
&+\|e^{2ar}Z^{\leq M}\square_g f\|_{L_{t,z}^2}
+\|(\partial_t,\nabla_\eta)^{\leq M-1}\square_g f\|_{(L_t^\infty\cap L_t^2)L_z^2(r\leq 2R_0)},
\end{Eq*}
for any smooth function $f$ whose spatial support is compact on each finite time interval, with the terms involving $M-1$ omitted when $M=0$ and the implicit constant depending on $a$ and $R_0$,
where
\begin{Eq*}
\|f\|_{ E}:=&\kl\|(\tpartial^{\leq 1},r^{-1}) f\kr\|_{L_t^\infty L_z^2}\approx \|\partial_tf\|_{L_t^\infty L_z^2}+\|f\|_{L_t^\infty H^1},\\
\|f\|_{ \widetilde{LE}}:=&\|(\partial_t,\nabla_\eta^{\leq 1})f\|_{L_t^2 L_z^2(r\lesssim R_0)}
+\|e^{-2a r}(\partial_t^{\leq 1},\d)f\|_{L_t^2 L_z^2(r\gtrsim R_0)}.
\end{Eq*}
\end{proposition}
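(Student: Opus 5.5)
We will prove \cref{prop:global-local-energy-estimate} by a cutoff decomposition into an interior piece handled by \cref{hyp:local-energy} and an exterior piece handled by \cref{prop:positive-mass-local-energy-estimate}, in the spirit of \cite{MetcalfeWang2017}. Here $Z$ denotes the family of commuting vector fields (time translation $\partial_t$ together with the rotations $\Omega$). We first treat $M=0$ and then induct on $M$. Fix a smooth radial cutoff $\beta(r)$ with $\beta=1$ for $r\geq 2R_0$ and $\beta=0$ for $r\leq R_0$, and write $f=\beta f+(1-\beta)f$.

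\emph{Exterior piece.} Let $\tilde g$ be a metric that agrees with $g$ on $r\geq R_0$ and equals $\eta$ plus a small smooth interpolation inside. Then $\tilde g$ satisfies \cref{hyp:small-asymptotic-decay} on all of $\rR_+$, and $P_{\tilde g}(\beta f)=P_g(\beta f)$. By \cref{prop:positive-mass-local-energy-estimate}, the quantity $\|\beta f\|_{E\cap LE}$ is bounded by the data, by $\|r^{a-1/2}e^{-ar}\beta f\|_{L^2_{t,z}}$, and by
\[
\|e^{2ar}\beta\square_g f\|_{L^2_{t,z}}+\|[\square_g,\beta]f\|_{L^2_{t,z}}.
\]
The commutator is supported in $R_0\leq r\leq 2R_0$ and involves only $(\partial_r^{\leq 1})f$, so it is controlled by the interior $\widetilde{LE}$ norm. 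The lower-order term $\|r^{a-1/2}e^{-ar}\beta f\|$ is treated as in the proof of \cref{thm:glassey-type-global-existence} (the $I_2$ argument). By H\"older and \cref{thm:main-strichartz-estimate} with $(2,\frac{2n}{n-2a},\frac12)$ it is bounded by the data plus $\|\square_g f\|_{L^2_tL^{2n/(n+2a)}}$, and hence by $\|e^{2ar}\square_g f\|_{L^2_{t,z}}$. This uses $r\gtrsim R_0$, so no small-$r$ absorption is needed.

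\emph{Interior piece.} We apply \cref{hyp:local-energy} with $R=2R_0$ to $(1-\beta)f$. The forcing $(1-\beta)\square_g f-[\square_g,\beta]f$ is supported in $r\leq 2R_0$, and its commutator part is again controlled by $\|(\partial_r,1)f\|_{L^2(R_0\le r\le 2R_0)}$. The remainder $\|e^{-ar}(1-\beta)f\|_{L^2_{t,z}}$ is controlled by Strichartz exactly as above; near $r=0$ we use $e^{-ar}\lesssim 1$ and the $L^2_tL^{2n/(n-2a)}$ bound on a compact set. The energy part $L^\infty_tL^2$ comes from the standard energy identity, whose error $\int\!\!\int|\square_g f\,\partial_t f|$ is bounded by $\|e^{2ar}\square_g f\|\,\|e^{-2ar}\partial_t f\|$ and absorbed.

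\emph{Main obstacle: the circular commutators.} The exterior bound uses the interior $\widetilde{LE}$ norm on the transition annulus, and vice versa. The naive sum of the two estimates does not close, since each contains the other with an $O(1)$ constant. To break this, we first apply \cref{hyp:local-energy} to $(1-\beta)f$, where the commutator is at the fixed radius $R_0$. The exterior estimate, however, controls the annulus only with the weaker weight $r^{a-1/2}e^{-ar}$, which is comparable to $1$ on a compact annulus. So on $[R_0,2R_0]$ the LE norm from \cref{prop:positive-mass-local-energy-estimate} dominates $\|\tpartial f\|_{L^2}$ with constant depending only on $R_0$.

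The closing scheme is then as follows. Bound the annulus by the \emph{global} estimate \cref{prop:positive-mass-local-energy-estimate} applied with a second cutoff $\beta_1$ supported in $r\ge R_0/2$. This pushes the commutator inward to $[R_0/2,R_0]$, which lies inside the region $r<2R_0$ covered by the interior hypothesis whose forcing is on $r\le 2R_0$. If this two-cutoff trick still leaves an $O(1)$ loop, we fall back on a contradiction/compactness argument, or on the freedom that the constant in \cref{hyp:local-energy} may depend on $R$. This is the step we expect to require the most care.

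\emph{Higher order ($M\geq1$).} $\partial_t$ commutes with $\square_g$. The rotations $\Omega$ commute with $\square_g$ modulo terms involving $\varL_\Omega\gamma$, and these are $O(\varepsilon' e^{-br})$ in the exterior region, so they are absorbed there. In the interior, $\Omega$ does not commute, so we instead recover $Z^{\leq M}f$ on $r\le 2R_0$ by elliptic regularity. We write $\Delta_g f=\partial_t^2 f+\cdots-\square_g f$ and use the $\partial_t$-estimates; this produces exactly the terms $\|(\partial_t,\nabla_\eta)^{\leq M-1}\square_g f\|_{(L^\infty_t\cap L^2_t)L^2(r\le 2R_0)}$. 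Finally, the data $\partial_t^{k}f(0)$ with $k\geq2$ are rewritten through the equation, giving the term $\|Z^{\leq M-1}\square_g f(0)\|_{L^2}$.
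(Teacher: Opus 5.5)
There is a genuine gap, and it sits exactly at the step you flag as the main obstacle. You cut $f$ itself into $\beta f$ and $(1-\beta)f$. As a result, both the interior estimate (from \cref{hyp:local-energy}) and the exterior estimate (from \cref{prop:positive-mass-local-energy-estimate} on the glued metric) have the commutator $[\square_g,\beta]f$ on the right-hand side. That commutator is essentially $\|\nabla^{\leq1}f\|_{L^2_{t,z}}$ on the transition annulus, and it enters each estimate with an $O(1)$ constant depending on $R_0$.

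Your two-cutoff trick does not break this loop. With transitions on two different annuli you obtain $A\leq C+C_1B$ and $B\leq C+C_2A$, and nothing makes $C_1C_2$ small. Moreover, a cutoff $\beta_1$ supported in $r\geq R_0/2$ cannot be fed into \cref{prop:positive-mass-local-energy-estimate}. The glued metric can agree with $g$ only where \cref{hyp:small-asymptotic-decay} holds, so $(P_g-P_{\tilde g})(\beta_1 f)$ is a non-perturbative second-order error on $[R_0/2,R_0]$. Neither of your fallbacks supplies the missing smallness: the $R$-dependence of the constant in \cref{hyp:local-energy} is unspecified, and the compactness argument is not actually given.

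The missing idea is to split the data and forcing rather than the solution, so that the dependence runs in one direction only.
\begin{itemize}
\item Let $f_1$ solve $\square_g f_1=\chi_{4R_0}\square_g f$ with data $\chi_{4R_0}(f,\partial_tf)(0)$. Then \cref{hyp:local-energy} applies directly, and its remainder $\|e^{-ar}f_1\|_{L^2_{t,z}}$ is handled by the H\"older--Strichartz argument.
\item For $f_2=f-f_1$, first solve the glued problem $\square_{\tilde g}f_{21'}=(1-\chi_{4R_0})\square_gf$ with data $(1-\chi_{4R_0})(f,\partial_tf)(0)$. \Cref{prop:positive-mass-local-energy-estimate} bounds $f_{21'}$ purely by data and forcing.
\item Then $f_{22}:=f_2-(1-\chi_{2R_0})f_{21'}$ has zero data and compactly supported forcing $[\chi_{2R_0},\Delta_{\tilde g}]f_{21'}$. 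This forcing is already controlled by the bound on $f_{21'}$, so \cref{hyp:local-energy} applies once more.
\end{itemize}
This gives the interior bound for $f$ without reference to $f$ on any annulus. Only afterwards do you cut $f$ by $(1-\chi_{2R_0})$ for the exterior region. The resulting commutator is then controlled by the interior bound already established, so no circularity arises.

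Your energy step and your elliptic-regularity treatment of higher orders are fine. Note, however, that the family $Z$ in the statement also contains the hyperbolic translations $K_i$, which you omitted. They are handled like $\Omega_{ij}$, since they are Killing for $\eta$ and commute with $\Delta_\eta$.
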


\begin{remark}\label{rem:vector-field-conventions}
In \cref{prop:global-local-energy-estimate}, $
Z=\{\partial_t\}\cup\{K_i\}_{i=1}^n\cup\{\Omega_{ij}\}_{1\leq i<j\leq n}$,
where $K_i$ are the hyperbolic translations centered at the origin, and $\Omega_{ij}$ are the rotations on the standard hyperbolic space.
For detailed properties of these vector fields, we refer the reader to \cite{DaiWang2026}; the property used below is
\begin{Eq}\label{eq:hyperbolic-vector-field-identity}
\Delta_\eta=\sum_i K_i^2-\sum_{i<j}\Omega_{ij}^2,\qquad
\varL_Z\eta=0,\qquad [Z,\Delta_{\eta}]=0.
\end{Eq}
In the exterior region, with $\omega\in\rS^{n-1}$ and the standard convention $\Omega_{ij}=\omega_i\partial_{\omega_j}-\omega_j\partial_{\omega_i}$, we have
\begin{Eq*}
K_i=\omega_i\partial_r+\coth r\sum_j\omega_j\Omega_{ji}.
\end{Eq*}
For any fixed number of commutations, this identity and the tensor Lie derivative product rule show that the Lie derivatives of $g-\eta$ are controlled by \cref{hyp:small-asymptotic-decay}; the additional coefficient-derivative terms are bounded in the $\eta$-norm in the exterior region.
The norm $|\d f|$ in $\widetilde{LE}$ is measured as the $\eta$-norm of the spatial covector $\d f$ in the fixed $(r,\Theta)$ coordinate framework.
The derivatives $(\partial_t,\nabla_\eta)^{\leq M-1}$ in the compact forcing term may be replaced, on $\{r\leq 2R_0\}$, by any equivalent fixed finite family of local derivatives.
\end{remark}

\begin{remark}
The local energy hypothesis \cref{hyp:local-energy} contains the lower-order remainder
$\|e^{-ar}\phi\|_{L_{t,z}^2}$.
This term does not appear in \cref{prop:global-local-energy-estimate} because, in the proof of the proposition,
the corresponding contribution is localized to the interior region and is controlled by the global Strichartz estimate \cref{thm:main-strichartz-estimate}.
Thus it is absorbed in the final estimate used in the nonlinear iteration.
\end{remark}

To begin with, we construct a sequence by setting $\phi_{-1} := 0$ and defining $\phi_k$ for $k \geq 0$ to satisfy
\begin{Eq*}
\begin{cases}
\square_g \phi_k=
F_2(\phi_{k-1},\partial_t\phi_{k-1})
 +F_0(\phi_{k-1},\partial_t\phi_{k-1})(\d\phi_{k-1},\d\phi_{k-1})_g; \\ (\phi_k,\partial_t\phi_k)_{t=0}=\varepsilon(\psi_0,\psi_1).
\end{cases}
\end{Eq*}
We will show that the sequence $\{\phi_k\}$ converges, 
thereby yielding a solution to \eqref{eq:smooth-nonlinearity-equation}.
Here, we first carry out the iteration at one fixed high Sobolev level.
To proceed, fix a sufficiently large integer $M$, once and for all, so that the Sobolev-Moser estimates on fixed compact regions and the exterior Sobolev estimates used below are valid.
The constants in the estimates may depend on this fixed $M$, and the smallness threshold for $\varepsilon$ is chosen afterwards.
After the finite-order solution is obtained, smoothness for smooth compactly supported data follows by the standard persistence-of-regularity argument, applying the same type of higher-order estimates successively, as in our previous work \cite{DaiWang2026}.
We introduce the notations
\begin{Eq*}
\varE_k:=\|Z^{\leq M}\phi_k\|_{E\cap \widetilde{LE}},\qquad
\varF_k:=\|\phi_{k+1}-\phi_k\|_{E\cap \widetilde{LE}},
\end{Eq*}
where $E$ and $\widetilde{LE}$ are those introduced in \cref{prop:global-local-energy-estimate}.
The notation and basic properties of $Z$ are recorded in Remark~\ref{rem:vector-field-conventions}.
The proof has two steps.
First, we prove that the iterates are uniformly bounded in the high norm, namely $\varE_k\lesssim\varepsilon$.
Then we prove that the iteration is contractive in the base norm $\varF_k$.

We now proceed with the iteration argument.
First, observe that $\varE_{-1} = 0\lesssim\varepsilon$. 
Next, we assume that for some $k$, we have $\varE_{k-1} \lesssim \varepsilon$.
We choose $a>0$ sufficiently small so that $0<2a<b$ and $4a<(n-1)/2$.
Applying \cref{prop:global-local-energy-estimate} to $\phi_k$ on $[0,T]$, with constants independent of $T$, and then letting $T\to\infty$, we obtain
\begin{Eq*}
\varE_k\lesssim& \|\tpartial^{\leq 1}Z^{\leq M}\phi_k(0)\|_{L^2}
+\|Z^{\leq M-1}\big(F_2(\phi_{k-1},\partial_t\phi_{k-1})
+F_0(\phi_{k-1},\partial_t\phi_{k-1})(\d\phi_{k-1},\d\phi_{k-1})_g\big)(0)\|_{L_z^2}\\
&\qquad+\|e^{2ar}Z^{\leq M}F_2(\phi_{k-1},\partial_t\phi_{k-1})\|_{L_{t,z}^2}\\
&\qquad+\|e^{2ar}Z^{\leq M}\big(F_0(\phi_{k-1},\partial_t\phi_{k-1})(\d\phi_{k-1},\d\phi_{k-1})_g\big)\|_{L_{t,z}^2}\\
&\qquad+\|(\partial_t,\nabla_\eta)^{\leq M-1}F_2(\phi_{k-1},\partial_t\phi_{k-1})\|_{(L_t^\infty\cap L_t^2)L_z^2(r\leq 2R_0)}\\
&\qquad+\|(\partial_t,\nabla_\eta)^{\leq M-1}\big(F_0(\phi_{k-1},\partial_t\phi_{k-1})(\d\phi_{k-1},\d\phi_{k-1})_g\big)\|_{(L_t^\infty\cap L_t^2)L_z^2(r\leq 2R_0)}\\
\equiv&I_1+I_2+I_3+I_4+I_5+I_6.
\end{Eq*}
For $I_1$ and $I_2$, following the same reasoning as in \cite{DaiWang2026}, we have $I_1+I_2 \lesssim \varepsilon$, and we omit the details here.
We next estimate the nonlinear contributions $I_3,\ldots,I_6$.
Set $\mathcal D f:=(f,\partial_t f,\d f)$.
With the convention in \cref{prop:global-local-energy-estimate}, for any scalar function $f$ we first have, in the compact region,
$|\d f|_\eta\lesssim |\nabla_\eta f|$.
In the exterior region, the spatial exterior derivative can be estimated by the hyperbolic Killing fields:
\begin{Eq*}
|\d f|_\eta
\lesssim |\partial_r f|+\sinh^{-1}r\,|\nabla_{\Theta} f|
\lesssim |Kf|+\sinh^{-1}r\,|\Omega f|
\lesssim |Zf|.
\end{Eq*}
In particular, we only use the hyperbolic angular factor $\sinh^{-1}r\leq 1$ in the exterior region.
When $Z$ acts on the covector $\d f$, we use the Lie derivative convention, so that $\varL_Z(\d f)=\d(Zf)$.
Consequently, by the Sobolev embedding on the fixed compact region, the exterior spherical Sobolev estimate, \eqref{eq:hardy-type-inequality}, and the preceding bound for $\d f$, we have
\begin{Eq*}
\|\mathcal D Z^{\leq \lfloor M/2\rfloor+1}\phi_{k-1}\|_{L_{t,z}^{\infty}(r\leq 2R_0)}
+\|r^{-\frac12}e^{\frac{n-1}{2}r}
\mathcal D Z^{\leq \lfloor M/2\rfloor+1}\phi_{k-1}\|_{L_{t,z}^{\infty}(r\geq R_0)}
\lesssim \varE_{k-1}.
\end{Eq*}

For $I_3$, using the smoothness of $F_2$ and the vanishing conditions $F_2(0,0)=DF_2(0,0)=0$,
the chain rule and the Sobolev-Moser estimates show that
$Z^{\leq M}F_2(\phi_{k-1},\partial_t\phi_{k-1})$ is a finite sum of terms of the schematic form
\begin{Eq*}
A\big(\partial_t^{\leq 1}Z^{\leq \lfloor M/2\rfloor+1}\phi_{k-1}\big)\,
\big(\partial_t^{\leq 1}Z^{\leq \lfloor M/2\rfloor+1}\phi_{k-1}\big)\,
\big(\partial_t^{\leq 1}Z^{\leq M}\phi_{k-1}\big),
\end{Eq*}
where $A$ is a bounded smooth coefficient depending on finitely many derivatives of $F_2$, as long as $\varE_{k-1}$ is controlled from above.
The extra low-order factor reflects the absence of constant and linear terms in $F_2$.
Thus it suffices to estimate one such product.
In the compact region $r\lesssim R_0$, using the compact part of the preceding pointwise bound, we obtain
\begin{Eq*}
I_{3;(r\lesssim R_0)}
\lesssim C(\varE_{k-1})\varE_{k-1}\|(\partial_t^{\leq1},\nabla_\eta)Z^{\leq M}\phi_{k-1}\|_{L_{t,z}^2(r\lesssim R_0)}
\lesssim C(\varE_{k-1})\varE_{k-1}^2.
\end{Eq*}

For the exterior region $r\gtrsim R_0$, using the exterior part of the preceding pointwise bound and taking $a>0$ sufficiently small, for instance $4a<(n-1)/2$, we have
\begin{Eq*}
I_{3;(r\gtrsim R_0)}
\lesssim& C(\varE_{k-1})
\|r^{-\frac{1}{2}}e^{\frac{n-1}{2}r}\mathcal D Z^{\leq \lfloor M/2\rfloor+1}\phi_{k-1}\|_{L_{t,z}^\infty(r\gtrsim R_0)}
\\
&\qquad \times\|e^{-2ar}\mathcal D Z^{\leq M}\phi_{k-1}\|_{L_{t,z}^2(r\gtrsim R_0)} 
\cdot \|r^{\frac{1}{2}}e^{(4a-\frac{n-1}{2})r}\|_{L^\infty}\\
\lesssim& C(\varE_{k-1})\varE_{k-1}^2.
\end{Eq*}
Combining the two regions yields $I_3\lesssim C(\varE_{k-1})\varE_{k-1}^2$.

We now turn to $I_4$.
By the product rule, the chain rule, and the identity $\varL_Z(\d f)=\d(Zf)$,
the differentiated nonlinearity in $I_4$ is a finite sum of terms of the following two schematic types:
\begin{Eq*}
&A_1\big(\partial_t^{\leq 1}Z^{\leq \lfloor M/2\rfloor+1}\phi_{k-1}\big)
\big[\varL_Z^{\leq M}g^{-1}\big]\big(\d Z^{\leq \lfloor M/2\rfloor+1}\phi_{k-1},
\d Z^{\leq M}\phi_{k-1}\big)\\
&\quad+
A_2\big(\partial_t^{\leq 1}Z^{\leq \lfloor M/2\rfloor+1}\phi_{k-1}\big)
\big(\partial_t^{\leq 1}Z^{\leq M}\phi_{k-1}\big)
\big[\varL_Z^{\leq \lfloor M/2\rfloor+1}g^{-1}\big]\big(\d Z^{\leq \lfloor M/2\rfloor+1}\phi_{k-1},
\d Z^{\leq \lfloor M/2\rfloor+1}\phi_{k-1}\big).
\end{Eq*}
The first type corresponds to the case where the highest derivative falls on the metric contraction or on one of the exterior derivatives, while the second type corresponds to the case where the highest derivative falls on $F_0(\phi_{k-1},\partial_t\phi_{k-1})$.
We denote their corresponding contributions to $I_4$ by $I_{41}$ and $I_{42}$, respectively.
Here $A_1$ and $A_2$ denote bounded smooth coefficients depending on finitely many derivatives of $F_0$, including the lower-order factors produced by differentiating $F_0(\phi_{k-1},\partial_t\phi_{k-1})$.
On the compact region, the factors involving $\varL_Zg^{-1}$ are bounded by the smoothness of $g$.
In the exterior region, since $g^{-1}=\eta^{-1}+O(e^{-br})$ with the corresponding differentiated bounds from \cref{hyp:small-asymptotic-decay}, and since $Z$ is Killing for $\eta$, these metric factors are bounded, and are perturbative when a derivative lands on $g^{-1}-\eta^{-1}$.
Using the compact part of the pointwise bound and the uniform comparability of $g$ and $\eta$ in the fixed compact region, we get
\begin{Eq*}
I_{41;(r\lesssim R_0)}
\lesssim C(\varE_{k-1})
\|\d Z^{\leq M}\phi_{k-1}\|_{L_{t,z}^2(r\lesssim R_0)}
\|\mathcal D Z^{\leq \lfloor M/2\rfloor+1}\phi_{k-1}\|_{L_{t,z}^\infty(r\lesssim R_0)}
\lesssim C(\varE_{k-1})\varE_{k-1}^2.
\end{Eq*}
For the exterior region, using \cref{hyp:small-asymptotic-decay} again to compare the $g$- and $\eta$-norms, and taking $a>0$ sufficiently small, for instance $4a<(n-1)/2$, we obtain
\begin{Eq*}
I_{41;(r\gtrsim R_0)}
\lesssim& C(\varE_{k-1})
\|r^{-\frac{1}{2}}e^{\frac{n-1}{2}r}\d Z^{\leq \lfloor M/2\rfloor+1}\phi_{k-1}\|_{L_{t,z}^\infty(r\gtrsim R_0)}
\\
&\qquad\times
\|e^{-2ar}\d Z^{\leq M}\phi_{k-1}\|_{L_{t,z}^2(r\gtrsim R_0)}
\cdot \|r^{\frac{1}{2}}e^{(4a-\frac{n-1}{2})r}\|_{L^\infty}
\lesssim C(\varE_{k-1})\varE_{k-1}^2.
\end{Eq*}
Thus $I_{41}\lesssim C(\varE_{k-1})\varE_{k-1}^2$.
For $I_{42}$, the factor $\partial_t^{\leq 1}Z^{\leq M}\phi_{k-1}$ is estimated as in $I_3$, while the lower-order metric contraction is controlled by the same pointwise bounds used for $I_{41}$.
Therefore,
\begin{Eq*}
I_{42}\lesssim C(\varE_{k-1})\varE_{k-1}^2.
\end{Eq*}
Thus $I_4\lesssim C(\varE_{k-1})\varE_{k-1}^2$.

It remains to estimate the compact forcing terms $I_5$ and $I_6$.
On the fixed compact region $r\leq 2R_0$, the derivatives $(\partial_t,\nabla_\eta)^{\leq M-1}$ are controlled by finitely many $Z$-derivatives of comparable order.
For the $L_t^2L_z^2$ part, we use the compact local energy component of $\varE_{k-1}$.
For the $L_t^\infty L_z^2$ part, the same compact Sobolev-Moser estimate is combined with the energy component of $\varE_{k-1}$.
By the vanishing of $F_2$ to second order and the quadratic structure of the second nonlinearity, the compact parts of the estimates for $I_3$ and $I_4$ give
\begin{Eq*}
I_5+I_6\lesssim C_{R_0}C(\varE_{k-1})\varE_{k-1}^2.
\end{Eq*}

At this point, we have obtained the uniform inequality
\begin{Eq*}
\varE_k\leq C_0\kl(\varepsilon+C(\varE_{k-1})\varE_{k-1}^2\kr).
\end{Eq*}
Therefore, if $\varE_{k-1}\leq 2C_0\varepsilon$ and $\varepsilon$ is sufficiently small, then $\varE_k\leq 2C_0\varepsilon$.
By induction, we have $\varE_k\lesssim\varepsilon$ uniformly in $k$.
For the contraction, the difference equation has forcing given by the difference between
\begin{Eq*}
F_2(\phi_k,\partial_t\phi_k)
+F_0(\phi_k,\partial_t\phi_k)(\d\phi_k,\d\phi_k)_g
\end{Eq*}
and the same expression with $\phi_k$ replaced by $\phi_{k-1}$.
By applying the zeroth-order version of \cref{prop:global-local-energy-estimate} to the difference equation, and using the mean value theorem together with the same lower-order nonlinear estimates, exactly as in the corresponding contraction argument in \cite{DaiWang2026}, one obtains
$\varF_k\leq C_1\varepsilon\,\varF_{k-1}$.
Taking $\varepsilon$ sufficiently small, we have $\varF_k\leq \varF_{k-1}/2$.
Hence $\{\phi_k\}$ converges to a solution $\phi\in H^{M}$.
This completes the proof.

\section{Appendix}\label{sec:appendix}

In this appendix, we prove the two local energy estimates used in the nonlinear applications, namely \cref{prop:positive-mass-local-energy-estimate} and \cref{prop:global-local-energy-estimate}.

\subsection{Proof of Proposition~\ref{prop:positive-mass-local-energy-estimate}}\label{subsec:proof-of-positive-mass-local-energy-estimate}

\begin{proof}
We prove the estimate on a finite time interval $[0,T]$, with constants independent of $T$; the global estimate follows by letting $T\to\infty$.
The idea of the proof is essentially the local energy argument from our earlier work \cite{DaiWang2026}; for the reader's convenience, we include a reasonably complete sketch and indicate where the positive mass term enters.
The perturbative calculation in \cite{DaiWang2026} is carried out for a more general class of time-dependent second-order perturbations; in the present static setting the only new feature relevant to the multiplier identity is the positive mass term.
For the unperturbed hyperbolic operator, write
\begin{Eq*}
\tilde\square_{\rH}:=-\nabla^\alpha\nabla_\alpha+c,
\qquad
c=\kappa^2-\frac{(n-1)^2}{4}.
\end{Eq*}
Since $\kappa>0$, we have $c>-\frac{(n-1)^2}{4}$.
Throughout the proof, we set
\begin{Eq*}
w=\frac{n-1}{2\tanh r}.
\end{Eq*}
We also use the radial divergence notation
\begin{Eq*}
\hpartial_r f:=\partial_r f+2wf=\sinh^{1-n}r\,\partial_r(\sinh^{n-1}r f).
\end{Eq*}
We first prove the estimate for the unperturbed operator $\tilde\square_{\rH}$.  Let $\mathfrak m=-dt^2+\eta$ be the Lorentzian product metric used in this unperturbed computation; indices in this part of the proof are raised using $\mathfrak m$, and $\nabla$ denotes its Levi-Civita connection.
The energy-momentum tensor is
\begin{Eq*}
\tilde T_0^{\alpha\beta}
:=\nabla^\alpha f\nabla^\beta f
-\frac12 \mathfrak m^{\alpha\beta}\kl(\nabla_\sigma f\nabla^\sigma f+cf^2\kr).
\end{Eq*}
For a multiplier triple $(X,\chi,Y)$, define
\begin{Eq*}
\tilde J_0^\alpha
:=-\tilde T_0^{\alpha\beta}X_\beta
+\frac12 f^2\nabla^\alpha\chi-\chi f\nabla^\alpha f+Y^\alpha .
\end{Eq*}
A direct computation gives
\begin{Eq*}
\nabla_\alpha\tilde J_0^\alpha
=\tilde\square_{\rH}f(Xf+\chi f)
-\tilde T_0^{\alpha\beta}\pi_{\alpha\beta}
+\frac12f^2\nabla_\alpha\nabla^\alpha\chi
-\chi\nabla_\alpha f\nabla^\alpha f
-c\chi f^2+\nabla_\alpha Y^\alpha .
\end{Eq*}

Taking $(X,\chi,Y)=(\partial_t,0,0)$, we obtain the conserved energy density
\begin{Eq*}
\tilde J_0^t
=\frac12\kl(
|\partial_t f|^2+|\partial_r f+wf|^2+|\spartial f|^2
+\kl(\frac{(n-1)^2}{4}+c+\frac{(n-1)(n-3)}{\sinh^2r}\kr)f^2
\kr)
-\frac12\hpartial_r(wf^2).
\end{Eq*}
Since $c>-(n-1)^2/4$ and the divergence term integrates to zero, this controls the energy norm.

We next use a radial multiplier.  Set
\begin{Eq*}
(X,\chi,Y)=\kl(h\partial_r,hw,-\frac12h'\tw f^2\partial_r\kr),
\end{Eq*}
where $\tw=w$ or $\tw=w-r^{-1}$.  The boundary term is the same as in the shifted wave case and is bounded by the energy.  For the divergence term one computes
\begin{Eq*}
\nabla_\alpha\tilde J_0^\alpha
=&h\tilde\square_{\rH}f(\partial_r f+wf)
-\frac{h'}2\kl(|\partial_t f|^2+|\partial_r f+\tw f|^2\kr)
-\kl(\frac{h}{\tanh r}-\frac{h'}2\kr)|\spartial f|^2\\
&-\frac12Wf^2
+\frac{h'}2\kl(\frac{(n-1)^2}{4}+c\kr)f^2,
\end{Eq*}
where
\begin{Eq*}
W=h''(\tw-w)
+h'\kl(-2w'+\tw'-2w^2+2w\tw-\tw^2+\frac{(n-1)^2}{4}\kr)
-h(w''+2ww').
\end{Eq*}
For the given $a\in(0,1)$ choose
\begin{Eq*}
h(r)=1+\int_0^r a\rho^{2a-1}e^{-2a\rho}\,d\rho .
\end{Eq*}
If $a<1/2$, this multiplier is first replaced by a smooth regularization near $r=0$; the estimates below are uniform in the regularization parameter, and we then pass to the limit.
Then $h$ is bounded and
\begin{Eq*}
h'\approx r^{2a-1}e^{-2ar},\qquad
\frac{h}{\tanh r}-\frac{h'}2\gtrsim h'.
\end{Eq*}
Using the identities
\begin{Eq*}
w^2+w'-\frac{(n-1)^2}{4}=\frac{(n-1)(n-3)}{4\sinh^2r},
\qquad
w''+2ww'=-\frac{2}{\tanh r}\frac{(n-1)(n-3)}{4\sinh^2r},
\end{Eq*}
we obtain, with $\tw=w$,
\begin{Eq*}
W\gtrsim \frac{n-3}{\sinh^2r\tanh r},
\end{Eq*}
and with $\tw=w-r^{-1}$,
\begin{Eq*}
W\gtrsim r^{2a-3}e^{-2ar}.
\end{Eq*}
The only term that cannot be absorbed is
\begin{Eq*}
\frac{h'}2\kappa^2 f^2,
\end{Eq*}
which is precisely controlled by
\begin{Eq*}
\|r^{a-\frac12}e^{-ar}f\|_{L_{t,z}^2}^2.
\end{Eq*}
This is the additional zeroth-order local term caused by the positive mass.
After integration over spacetime and Cauchy's inequality, this yields
\begin{Eq*}
\|f\|_{E\cap LE}^2
\lesssim
\|\tilde\square_{\rH}f\|_{L_t^1L_z^2+LE^*}
\|f\|_{E\cap LE}
+\|r^{a-\frac12}e^{-ar}f\|_{L_{t,z}^2}^2
+\|\tpartial^{\leq1}f(0)\|_{L^2}^2 .
\end{Eq*}
This gives the desired estimate for the unperturbed operator.

It remains to include the perturbative terms.  Under \cref{hyp:small-asymptotic-decay}, the difference between $P_g$ and $\tilde\square_{\rH}$ has coefficients decaying like $e^{-br}$ with $b>2a$.  In the multiplier identity, the typical errors have the form
\begin{Eq*}
O(\varepsilon'e^{-br})\nabla f\,\nabla f,\qquad
O(\varepsilon'e^{-br})f\,\nabla f,
\end{Eq*}
after integrating by parts in the second-order perturbative terms.  These weights are stronger than the squared local energy weight $r^{2a-1}e^{-2ar}$ because $b>2a$, and hence the total perturbative contribution is bounded by
\begin{Eq*}
\varepsilon' \|f\|_{E\cap LE}^2 .
\end{Eq*}
For sufficiently small $\varepsilon'$, these terms are absorbed into the left-hand side.  Replacing $\tilde\square_{\rH}f$ by $P_g f$ then proves the desired estimate.
\end{proof}

\begin{remark}
The proof above is intentionally kept close to the argument in \cite{DaiWang2026}.  If one wants to apply the estimate to $\partial_t f$ for time-dependent coefficients, commutator terms must be included.  In the static geometric applications in the present paper this causes no additional difficulty; a fully time-dependent statement should record the corresponding commutator bounds explicitly.
\end{remark}

\subsection{Proof of Proposition~\ref{prop:global-local-energy-estimate}}\label{subsec:proof-of-global-local-energy-estimate}

The estimate follows from the zeroth-order estimate in Lemma~\ref{lem:zeroth-order-global-local-energy-estimate} by the standard higher-order localized-energy argument, as in the proof of \cite[Proposition~1.4, Section~3.3]{Wang2015}.
Indeed, applying Lemma~\ref{lem:zeroth-order-global-local-energy-estimate} to $Z^\alpha f$, $|\alpha|\leq M$, and summing over $\alpha$, gives the desired high-order estimate except for the commutators between $Z^\alpha$ and $\square_g$.
In the exterior region these commutators are perturbative: since the vector fields $Z$ commute with the model operator $\square_\eta$, the principal commutator coefficients come from $g-\eta$.
By \cref{hyp:small-asymptotic-decay}, these coefficients decay like $e^{-br}$, and after multiplication by the weight $e^{2ar}$ they are bounded by $e^{-(b-2a)r}$, hence can be absorbed for $R_0$ sufficiently large since $0<2a<b$.
In the compact region, no smallness is assumed.
The commutators are handled by the localized elliptic regularity and induction-on-commutators argument used in \cite[Proposition 1.4]{Wang2015}; the worst second spatial derivatives of $Z^{\leq M-1}f$ are converted by the equation into time derivatives already included in the high-order local energy norm, differentiated forcing terms, and lower commutators.
The latter are controlled by induction on the number of commutations.
This argument produces precisely the additional compact forcing terms
\begin{Eq*}
\|Z^{\leq M-1}\square_g f(0)\|_{L_z^2}
+\|(\partial_t,\nabla_\eta)^{\leq M-1}\square_g f\|_{(L_t^\infty\cap L_t^2)L_z^2(r\leq 2R_0)}
\end{Eq*}
appearing in the statement of \cref{prop:global-local-energy-estimate}; these are the analogues of the compact forcing terms in \cite[Proposition~1.4, Section~3.3]{Wang2015}.
It remains only to prove the zeroth-order estimate.

\begin{lemma}[Zeroth-order estimate]\label{lem:zeroth-order-global-local-energy-estimate}
Let $n \geq 3$, and assume that $\kappa>0$ and $(M^\circ, g)$ are as in \cref{thm:main-strichartz-estimate}.
In addition, suppose that $g$ can be rewritten in the $(r, \Theta)$ coordinate system and satisfies \cref{hyp:local-energy}, and satisfies \cref{hyp:small-asymptotic-decay} in the region $r>R_0$ for some sufficiently large $R_0$.
Then, for any fixed $0<2a<b<1$, we have
\begin{Eq*}
\|f\|_{[ E\cap  \widetilde{LE}]}\lesssim& \|\tpartial^{\leq 1} f(0)\|_{L_z^2}+\|e^{2ar}\square_g f\|_{L_{t,z}^2},
\end{Eq*}
for any smooth function $f$ whose spatial support is compact on each finite time interval, where $E$ and $\widetilde{LE}$ are defined in \cref{prop:global-local-energy-estimate}.
\end{lemma}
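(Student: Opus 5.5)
We glue an interior estimate to an exterior one with a smooth radial cutoff. Let $\beta\in C^\infty$ satisfy $\beta=1$ for $r\leq R_0$ and $\beta=0$ for $r\geq 2R_0$. Write $f=\beta f+(1-\beta)f$ and set $f_{in}:=\beta f$ and $f_{ex}:=(1-\beta)f$.

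The exterior piece $f_{ex}$ is supported in $\{r\geq R_0\}$, where \cref{hyp:small-asymptotic-decay} holds. We extend $g$ inside $r<R_0$ to a metric $\tilde g$ that coincides with $g$ for $r\geq R_0$ and satisfies \cref{hyp:small-asymptotic-decay} globally; for $R_0$ large this is possible with the constant $\varepsilon'$ replaced by something no worse. Since $P_{\tilde g}f_{ex}=P_g f_{ex}$, \cref{prop:positive-mass-local-energy-estimate} gives
\begin{Eq*}
\|f_{ex}\|_{E\cap LE}\lesssim \|\tpartial^{\leq1}f(0)\|_{L^2}+\|r^{a-\frac12}e^{-ar}f_{ex}\|_{L^2_{t,z}}+\|e^{2ar}(1-\beta)\square_g f\|_{L^2_{t,z}}+\|[\square_g,\beta]f\|_{L^2_{t,z}}.
\end{Eq*}
Here the commutator is supported in $R_0\leq r\leq 2R_0$ and involves at most first derivatives of $f$. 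The $LE$ weight dominates the $e^{-2ar}$ weight in $\widetilde{LE}$ on $r\gtrsim R_0$, so the left side controls the exterior part of $\|f\|_{\widetilde{LE}}$.

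The lower-order term $\|r^{a-\frac12}e^{-ar}f_{ex}\|_{L^2}$ is handled exactly as $I_2$ in the proof of \cref{thm:glassey-type-global-existence}. We use H\"older's inequality together with \cref{thm:main-strichartz-estimate} at $(2,\frac{2n}{n-2a},\frac12)$, and bound it by $\|\tpartial^{\leq1}f(0)\|_{L^2}+\|\square_g f\|_{L^2_tL^{2n/(n+2a)}_z}$. The last norm is at most $\|e^{2ar}\square_g f\|_{L^2_{t,z}}\|e^{-2ar}\|_{L^{n/a}}$. We estimate the same term for the full $f$, so no region restriction is needed, and the identical argument also bounds $\|e^{-ar}f\|_{L^2_{t,z}}$.

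The interior piece $f_{in}$ is supported in $r\leq 2R_0$. By Duhamel and energy estimates we may treat $\beta\square_g f$ together with the commutator as a forcing supported in $\{r\leq 2R_0\}$. \cref{hyp:local-energy} with $R=2R_0$ then gives
\[
\|(\partial_t,\nabla_g^{\leq1})f\|_{L^2(r<2R_0)}\lesssim \|\tpartial^{\leq1}f(0)\|_{L^2}+\|\square_g f\|_{L^2}+\|e^{-ar}f\|_{L^2},
\]
since we may apply the hypothesis to $f$ itself only after splitting off forcing. Concretely, we write $f=u+v$, where $u$ solves $\square_g u=\beta\square_g f$ with the data of $f$ and $v$ solves $\square_g v=(1-\beta)\square_g f$ with zero data. \cref{hyp:local-energy} controls $u$. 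For $v$ we rerun the exterior argument above, and its zeroth-order term is again removed by Strichartz.

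The main obstacle is the circularity: the commutator $[\square_g,\beta]f$ in the exterior estimate is controlled by the interior local energy, while the interior estimate does not obviously need the exterior one. So the order should be interior first, where Strichartz removes $\|e^{-ar}f\|$, and then exterior. The energy part of $E$ comes from the standard energy identity together with the already controlled $L^2_tL^2_z$ local norms, with the $e^{2ar}$-weighted forcing paired against the $e^{-2ar}$-weighted derivatives.
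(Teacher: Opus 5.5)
Your exterior step for $(1-\beta)f$, your removal of its zeroth-order term by applying \cref{thm:main-strichartz-estimate} for $g$ to $f$ itself (on $\{r\geq R_0\}$, where $r^{a-\frac12}e^{-ar}\in L^{n/a}$), and your final energy step are fine.

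\textbf{The gap.} It lies in the sentence ``For $v$ we rerun the exterior argument above.'' The function $v$ solves $\square_g v=(1-\beta)\square_g f$, and this forcing is not compactly supported, so \cref{hyp:local-energy} does not apply to $v$. Nor does \cref{prop:positive-mass-local-energy-estimate}, since $g$ satisfies \cref{hyp:small-asymptotic-decay} only for $r>R_0$. Rerunning the exterior argument means treating $(1-\beta)v$ as a $\square_{\tg}$-solution. That produces the forcing $[\square_g,\beta]v$, and bounding it requires $\|(\partial_t,\nabla_g^{\leq1})v\|_{L^2_{t,z}(R_0\leq r\leq 2R_0)}$, which is exactly the interior quantity you still lack for $v$.

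So the circularity you identify in your last paragraph is not broken, only transferred from $f$ to $v$. There is also no smallness available to absorb it, because the constant in \cref{hyp:local-energy} depends on $R$. Your claim that the interior estimate does not need the exterior one fails precisely for the part of the solution driven by exterior forcing and data, which propagates into $\{r\leq 2R_0\}$. Note also that the paper applies \cref{hyp:local-energy} only to problems with data supported in $\{r\leq 4R_0\}$ or zero data. So the exterior part of the data belongs with $v$ as well.

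\textbf{The missing idea.} You need to generate the exterior contribution from a problem posed on the whole manifold for the glued metric $\tg=\eta+(1-\chi_{R_0})\gamma$, which satisfies \cref{hyp:small-asymptotic-decay} globally.
\begin{itemize}
\item In the paper, $f_{21'}$ solves $\square_{\tg}f_{21'}=(1-\chi_{4R_0})\square_g f$ with data $(1-\chi_{4R_0})(f,\partial_tf)(0)$. \Cref{prop:positive-mass-local-energy-estimate} then bounds its local energy everywhere, including $\{r\leq 4R_0\}$, with no commutator on the right-hand side.
\item Since $\tg=g$ on $\{r\geq 2R_0\}$, consider the remainder $f_{22}=f-f_1-(1-\chi_{2R_0})f_{21'}$, where $f_1$ is the analogue of your $u$ with localized data. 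It has zero data and forcing $[\chi_{2R_0},\Delta_{\tg}]f_{21'}$ supported in $\{r\leq 4R_0\}$. This forcing is bounded by the already-known local energy of $f_{21'}$, not of $f$, so \cref{hyp:local-energy} applies to $f_{22}$.
\item Only once this interior bound for $f$ is in hand does your exterior cutoff step close, because its commutator $[\square_g,\chi_{2R_0}]f$ lives where $f$ is now controlled.
\end{itemize}
One caveat: in this construction the zeroth-order term $\|r^{a-\frac12}e^{-ar}f_{21'}\|_{L^2_{t,z}}$ concerns a $\square_{\tg}$-solution. Your device of applying Strichartz for $g$ to $f$ does not cover it; the paper instead uses that $\tg$ also satisfies the hypotheses of \cref{thm:main-strichartz-estimate}.
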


For any constant $s > 0$, define $\chi_s(r) = \chi(r/s)$, where $\chi$ is defined in \cref{sec:preparation}. 
This construction ensures that $\chi_s$ equals $1$ on the interval $(0,s]$ and vanishes on $[2s, \infty)$.
For convenience, we denote $\square_g:=\partial_t^2-\Delta_g-(n-1)^2/4+\kappa^2$.

\begin{proof}
First, observe that if $R_0$ satisfies the condition in \cref{prop:global-local-energy-estimate}, then any $cR_0$ with $c \geq 1$ also satisfies it.
We now divide the argument into cases depending on the value of $r$, using $4R_0$ as the threshold, where $R_0$ is the fixed constant appearing in \cref{prop:global-local-energy-estimate}.
This interior-exterior decomposition, involving both the function and the metric, follows the standard strategy used in \cite{HMSSZ}.

\noindent  {\bf Case 1: Local energy estimate in the region $r\leq 4R_0$.}
To apply \cref{hyp:local-energy}, we first isolate the contribution generated by the data and forcing localized near this region.
For this purpose, we construct a function $f_1$ solving the equation
\begin{Eq*}
\square_g f_1=\chi_{4R_0}\cdot \square_gf,\qquad (f_1,\partial_t f_1)_{t=0}=\chi_{4R_0}\cdot(f,\partial_t f)_{t=0}.
\end{Eq*}
By applying \cref{hyp:local-energy} with a radius comparable to $R_0$, along with the same H\"older-Strichartz argument used to estimate $I_2$ in the proof of \cref{thm:glassey-type-global-existence} in \cref{subsec:proof-of-glassey-type-global-existence}, we obtain
\begin{Eq*}
\|(\partial_t,\nabla_\eta^{\leq 1})f_1\|_{L_{t,z}^2(r\leq 4R_0)}
\lesssim&\|(\partial_t,\nabla_g^{\leq 1})f_1\|_{L_{t,z}^2(r\leq 4R_0)}\\
\lesssim& \|\chi_{4R_0}f(0)\|_{H^1}+\|\chi_{4R_0}\partial_tf(0)\|_{L_z^2}+\|\chi_{4R_0}\square_g f\|_{L_{t,z}^2}+\|e^{-ar}f_1\|_{L_{t,z}^2}\\
\lesssim&\|\tpartial^{\leq 1} f(0)\|_{L_z^2}+\|e^{2ar}\square_g f\|_{L_{t,z}^2}.
\end{Eq*}
In the last step, the lower-order term $\|e^{-ar}f_1\|_{L_{t,z}^2}$ is controlled by the global Strichartz estimate \cref{thm:main-strichartz-estimate} through that same H\"older-Strichartz argument: on the compact region this follows from H\"older's inequality and the Strichartz bound, while in the exterior region the exponential weight is integrable and the same bound applies.
Meanwhile, let $f_2 = f - f_1$. Then $f_2$ satisfies
\begin{Eq*}
\square_g f_2=(1-\chi_{4R_0})\cdot \square_gf,\qquad (f_2,\partial_t f_2)_{t=0}=(1-\chi_{4R_0})\cdot(f,\partial_t f)_{t=0}.
\end{Eq*}

To handle the exterior contribution, we replace $g$ by a metric which agrees with $\eta$ in the interior and with $g$ in the exterior.  Define
\begin{Eq*}
\tg=\chi_{R_0} \eta+(1-\chi_{R_0})g=\eta+(1-\chi_{R_0})\gamma.
\end{Eq*}
Since $R_0$ is chosen sufficiently large, this metric satisfies the assumptions of \cref{prop:positive-mass-local-energy-estimate}.
Indeed, $\tg-\eta=(1-\chi_{R_0})(g-\eta)$, and the transition-region derivatives are still controlled by the original $e^{-br}$ decay; by increasing $R_0$, these perturbative constants are made sufficiently small.
In particular, after increasing $R_0$ if necessary, $\tg$ is a sufficiently small, exponentially decaying global perturbation of $\eta$ and hence also satisfies the geometric and spectral assumptions of \cref{thm:main-strichartz-estimate}.  The lower-order term in \cref{prop:positive-mass-local-energy-estimate} for the glued metric is therefore treated by the same H\"older-Strichartz argument used for $I_2$ in the proof of \cref{thm:glassey-type-global-existence}.
We then consider the function $f_{21} := (1 - \chi_{2R_0}) f_{21'}$, where $f_{21'}$ solves
\begin{Eq*}
\square_{\tg} f_{21'}=(1-\chi_{4R_0})\cdot \square_gf,\qquad (f_{21'},\partial_t f_{21'})_{t=0}=(1-\chi_{4R_0})\cdot(f,\partial_t f)_{t=0}.
\end{Eq*}
Applying \cref{prop:positive-mass-local-energy-estimate} to this equation, together with the same H\"older-Strichartz control of the lower-order term, we obtain
\begin{Eq*}
\|(\partial_t,\nabla_\eta^{\leq 1})f_{21}\|_{L_{t,z}^2(r\leq 4R_0)}\lesssim&\|(\partial_t,\nabla_\eta^{\leq 1})f_{21'}\|_{L_{t,z}^2(r\leq 4R_0)}\lesssim\|(\tpartial,r^{-1})f_{21'}\|_{L_{t,z}^2}\\
\lesssim&\|\tpartial^{\leq 1} f_{21'}(0)\|_{L_z^2}+\|r^{a-\frac{1}{2}}e^{-a r}f_{21'}\|_{L_{t,z}^2}+\| r^{\frac{1}{2}-a}e^{ar}\square_{\tg} f_{21'}\|_{L_{t,z}^2}\\
\lesssim&\|\tpartial^{\leq 1} f(0)\|_{L_z^2}+\| e^{2ar}\square_{\tg} f_{21'}\|_{L_{t,z}^2}\\
\lesssim&\|\tpartial^{\leq 1} f(0)\|_{L_z^2}+\| e^{2ar}\square_{g} f\|_{L_{t,z}^2}.
\end{Eq*}

Finally, let $f_{22} = f_2 - f_{21}$. 
Since $(g - \tg)$ vanishes in the region $r \geq 2R_0$ and $\supp f_{21} \subset \{r \geq 2R_0\}$,
it follows that $(f_{22},\partial_t f_{22})_{t=0}=(0,0)$ and
\begin{Eq*}
\square_g f_{22}=&\square_g f_2-\square_{\tg}\big((1-\chi_{2R_0})f_{21'}\big)\\
=&(1-\chi_{4R_0})\cdot \square_gf-(1-\chi_{2R_0})\square_{\tg} f_{21'}+[\chi_{2R_0},\Delta_{\tg}]f_{21'}\\
=&[\chi_{2R_0},\Delta_{\tg}]f_{21'}.
\end{Eq*}
Note that $\supp [\chi_{2R_0},\Delta_{\tg}]f_{21'}\subset\{r\leq 4R_0\}$. 
On this fixed region, the local energy weights are comparable to constants, so the local unweighted norms below are controlled by the corresponding interior local energy norms.
By once again applying \cref{hyp:local-energy} with a radius comparable to $R_0$, along with a similar argument as before, we obtain
\begin{Eq*}
\|(\partial_t,\nabla_\eta^{\leq 1})f_{22}\|_{L_{t,z}^2(r\leq 4R_0)}
\lesssim&\|(\partial_t,\nabla_g^{\leq 1})f_{22}\|_{L_{t,z}^2(r\leq 4R_0)}\\
\lesssim&\|[\chi_{2R_0},\Delta_{\tg}]f_{21'}\|_{L_{t,z}^2}
\lesssim\|(\partial_t,\nabla_\eta^{\leq 1})f_{21'}\|_{L_{t,z}^2(r\leq 4R_0)}\\
\lesssim&\|\tpartial^{\leq 1} f(0)\|_{L_z^2}+\| e^{2ar}\square_{g} f\|_{L_{t,z}^2}.
\end{Eq*}
In summary, we complete the proof of the local energy estimate in the region $r \leq 4R_0$.

\noindent  {\bf Case 2: Local energy estimate in the region $r\geq 4R_0$.}
In this case, we directly define $f_3 := (1 - \chi_{2R_0}) f$, which satisfies
\begin{Eq*}
\square_{\tg}f_3=\square_g f_3=[\square_g,\chi_{2R_0}]f+(1-\chi_{2R_0})\square_gf,\qquad (f_3,\partial_t f_3)_{t=0}=(1-\chi_{2R_0})\cdot(f,\partial_t f)_{t=0}.
\end{Eq*}
Since $(f - f_3)$ vanishes in the region $r \geq 4R_0$, 
we may apply \cref{prop:positive-mass-local-energy-estimate}, 
together with the same H\"older-Strichartz argument used for $I_2$ in \cref{subsec:proof-of-glassey-type-global-existence}, 
and the result from Case 1, to obtain
\begin{Eq*}
\|e^{-2ar}(\partial_t^{\leq 1},\d)f\|_{L_{t,z}^2(r\geq 4R_0)}\lesssim& \|r^{a-\frac{1}{2}}e^{-ar}(\tpartial,r^{-1})f_3\|_{L_{t,z}^2}
+\|r^{a-\frac{1}{2}}e^{-ar}f_3\|_{L_{t,z}^2}\\
\lesssim&\|\tpartial^{\leq 1}f_3(0)\|_{L_{z}^2}+\|r^{a-\frac{1}{2}}e^{-ar}f_3\|_{L_{t,z}^2}+\|r^{\frac{1}{2}-a}e^{ar}\square_{\tg}f_3\|_{L_{t,z}^2}\\
\lesssim&\|\tpartial^{\leq 1}f(0)\|_{L_{z}^2}+\|e^{2ar}\square_{g}f\|_{L_{t,z}^2}+\|[\square_g,\chi_{2R_0}]f\|_{L_{t,z}^2}\\
\lesssim&\|\tpartial^{\leq 1}f(0)\|_{L_{z}^2}+\|e^{2ar}\square_{g}f\|_{L_{t,z}^2}. 
\end{Eq*}
Here we used that, in the exterior region,
\begin{Eq*}
|\d f|_\eta
\lesssim |\partial_r f|+|\spartial f|
\lesssim |\partial_r f+w f|+|\spartial f|+|f|,
\end{Eq*}
so the exterior $\d f$ term is controlled by the derivative part of $\tpartial f$ together with the displayed lower-order $f_3$ term.
For $r\geq 4R_0$, we have $e^{-2ar}\lesssim r^{a-\frac12}e^{-ar}$, so the exterior weight in $\widetilde{LE}$ is controlled by the local energy weight in \cref{prop:positive-mass-local-energy-estimate}.
This establishes the local energy estimate in the region $r \geq 4R_0$.

\noindent  {\bf Case 3: Energy estimate}. 
By applying the standard energy method, we obtain
\begin{Eq*}
A(T)=&A(0)+\int_0^T (\partial_t f,\square_gf)_g\d t,\\
 A(t):=&\frac{(\partial_t f,\partial_t f)_g+(\sqrt{H_\kappa}f,\sqrt{H_\kappa }f)_g}{2}\approx \|\partial_t f\|_{L_z^2}^2+\|f\|_{H^1}^2\approx\|(\tpartial^{\leq 1},r^{-1})f\|_{L_z^2}^2.
\end{Eq*}
Thus, using the results of the local energy estimate, we obtain
\begin{Eq*}
\|f\|_{E}^2\lesssim& A(0)+\|e^{-2ar}\partial_tf\|_{L_{t,z}^2}\cdot\|e^{2ar}\square_g f\|_{L_{t,z}^2}\\
\lesssim&\|\tpartial^{\leq 1}f(0)\|_{L_{z}^2}^2+\|e^{2ar}\square_{g}f\|_{L_{t,z}^2}^2.
\end{Eq*}
This gives the desired energy estimate. 

Combining the conclusions from all three cases, we complete the proof of Lemma~\ref{lem:zeroth-order-global-local-energy-estimate}.
\end{proof}

 \bibliography{bib.bib}

\end{document}